\newtheorem{cor}{Corollary}[section]
\newtheorem{op}[cor]{Open Problem}
\newtheorem{te}[cor]{Theorem}
\newtheorem{p}[cor]{Proposition}
\newtheorem{lemma}[cor]{Lemma}
\theoremstyle{definition}
\newtheorem{de}[cor]{Definition}
\theoremstyle{remark}
\newtheorem{ob}[cor]{Observation}
\newtheorem{ex}[cor]{Example}
\newtheorem{nt}[cor]{Notation}
\newcommand{\cz}{\mathbb{C}}
\newcommand{\nz}{\mathbb{N}}
\newcommand{\rz}{\mathbb{R}}
\newcommand{\ff}{\mathbb{F}}
\newcommand{\unit}{\mathcal{U}}
\newcommand{\vp}{\varphi}
\newcommand{\ve}{\varepsilon}
\newcommand{\sm}{\setminus}
\def\tilde{\widetilde}
\def\cc{{\mathcal C}}
\begin{document}

\title{On Krein-Milman theorem for the space of sofic representations}

\author{Radu B. Munteanu}
\address[R.B. Munteanu]{Department of Mathematics, University of Bucharest, 14 Academiei Street and Institute of Mathematics of the Romanian Academy, 21 Calea Grivitei Street, 010702 Bucharest, Romania}
\email{radu-bogdan.munteanu@g.unibuc.ro}

\author{Liviu P\u aunescu}
\address[L. P\u aunescu]{Institute of Mathematics of the Romanian Academy, 21 Calea Grivitei Street, 010702 Bucharest, Romania}
\email{liviu.paunescu@imar.ro}
\maketitle


\begin{abstract}
Denote by $Sof(G)$ the space of sofic representations of a countable group $G$. This space is known by a result of the second author, to have a convex-like structure. We show that, in this space, minimal faces are extreme points. We then construct uncountable many extreme points for $Sof(\ff_2)$ and show that there exists a decreasing chain of closed faces with empty intersection. Finally we construct a strangely looking sofic representation in $Sof(\mathbb{F}_2)$ that we believe it is outside of the closure of the convex hull of extreme points.
\end{abstract}

\footnotetext[1]{This work was supported by a grant of the Romanian Ministry of Education and Research, CNCS - UEFISCDI, project number PN-III-P1-1.1-TE-2019-0262, within PNCDI III. The first author was partially supported by PN-III-P4-ID-PCE-2020-2693 grant from CNCS - UEFISCDI Romania.}

The starting point of our discussion is the paper \cite{Br}. Nate Brown considered the space of all morphisms from a fixed finite von Neumann algebra $N$ to the ultrapower of the hyperfinite factor, up to unitary equivalence, denoted by $Hom(N,R^\omega)$. He then constructed a convex structure on this space, and studied its extreme points. For a sofic group $G$, the space $Sof(G,P^\omega)$ is constructed analogously. Similar properties hold for extreme points, as shown in \cite{Pa2,Pa3}.

Over the years these spaces have been studied by different authors, possibly using other frameworks. In \cite{Ju}, it is shown that $Hom(G,R^\omega)$ consists of one point if and only if $G$ is amenable. Elek and Szabo showed the same thing for $Sof(G,P^\omega)$ in \cite{El-Sz}. Capraro and Fritz, \cite{Ca-Fr}, showed that these spaces can be embedded in an abstractly constructed Banach space, while Atkinson, \cite{Ak}, studied finite dimensional faces of $Hom(G,R^\omega)$. In \cite{Pa3}, the second author showed that there are groups $H\subset G$ such that the restriction $R:Sof(G,P^\omega)\to Sof(H,P^\omega)$ is not surjective. This is an obstruction to soficity, that hints at the existence of non-sofic groups. Let's state the main question to be investigated in this work.

\vspace{0.2cm}
\textbf{Question} \emph{Do $Hom(N,R^\omega)$ and $Sof(G,P^\omega)$ satisfy a Krein-Milman result?}
\vspace{0.2cm}

Apart from the original work of Brown and Capraro \cite{Br,Br-Ca}, this problem has been tackled before by Chirvasitu in \cite{Ch}. The author hints at an argument in favour of a positive answer (see discussion before Propostion 2.10 of \cite{Ch}). Let's recall the Krein-Milman theorem.

\vspace{0.2cm}
\textbf{Theorem} (Krein-Milman) \emph{Let $X$ be a locally convex topological vector space, and let $K$ be a compact convex subset of $X$. Then $K$ is the closed convex hull of its extreme points.}
\vspace{0.2cm}

$Hom(N,R^\omega)$ and $Sof(G,P^\omega)$ are subsets in a Banach space (that is always locally convex). However, in the appendix of \cite{Br}, Ozawa showed that they are never compact (unless $N$, or $G$ are amenable per Jung and Elek-Szabo's results). For general non-compact convex sets, the Krein-Milman theorem fails. 

In the proof of the theorem, compactness is used in two places: to show that minimal faces are points, and to deduce that a decreasing chain of closed faces has non-trivial intersection. In Section \ref{minimal faces}, we show that minimal faces of $Sof(G,P^\omega)$ are points, as before, even thought $Sof(G,P^\omega)$ is not compact. We then construct a decreasing chain of closed faces, with trivial intersection, Section \ref{decreasing faces}. Finally we construct a sofic representation of the free group, with trivial commutant and uncountably many cuts, Section \ref{another}

\section{Introduction}

For a matrix $x\in M_n$ we define its normalised trace as $Tr(x)=\frac1n\sum_ix(i,i)$. Throughout the paper we denote by $P_n\subset M_n$ the group of permutation matrices, and $D_n\subset M_n$ the maximal abelian subalgebra of diagonal matrices. The group $P_n$ is isomorphic to $Sym(n)$, the symmetric group on a set of $n$ elements. For simplicity, we call elements in $P_n$ permutations, instead of permutation matrices. For $p\in P_n$, $Tr(p)=1-\frac1n|Fix(p)|=1-d_H(p,Id)$, where $d_H$ is the normalised Hamming distance. On the algebra $D_n$, $Tr:D_n\to\cz$ is acting as an integral and $D_n$ is isomorphic to $L^\infty(\{1,\ldots,n\})$ endowed with the normalised cardinal measure.

Fix now $\omega$ a free ultrafilter on $\nz$ and $(n_k)_k$ a sequence of natural numbers, $\lim_kn_k=\infty$. The ultraproduct $\Pi_{k\to\omega}P_{n_k}$ is called \emph{the universal sofic group}. It was introduced by Elek and Szabo in \cite{El-Sz1}. They showed that a countable group is sofic if and only if it is a subgroup of $\Pi_{k\to\omega}P_{n_k}$.

The ultraproduct $(\Pi_{k\to\omega}D_{n_k},Tr)$ yields an abelian finite von Neumann algebra. As such, there exists a probability space $(X_\omega,\mu_\omega)$ such that $(\Pi_{k\to\omega}D_{n_k},Tr)\simeq L^\infty(X_\omega,\mu_\omega)$. We can construct $(X_\omega,\mu_\omega)$ as an ultraproduct of finite probability spaces, i.e. a Loeb construction. It comes with a measurable map called \emph{the standard part} $St:X_\omega\to [0,1]$. This map induces a canonical embedding $St^*:L^\infty([0,1],\mu)\to L^\infty(X_\omega,\mu_\omega)$, where $\mu$ is the Lebesgue measure. For more details on the Loeb space, standard part, and also the action of $\Pi_{k\to\omega}P_{n_k}$
on $(\Pi_{k\to\omega}D_{n_k},Tr)$, check Section 1 of \cite{CMP}.

We assume familiarity with the space of sofic representations for a group $G$, denoted by $Sof(G,P^\omega)$, and its convex-like structure. We refer the reader to Sections 2.1 and 2.2 of \cite{Pa2}. We use the same notations. 

\begin{p}[Proposition 2.2 of \cite{Pa2}]
Let $\Theta_i$, $i=1,\ldots,n$ be sofic representations of a group $G$, and $\lambda_i\in[0,1]$ be such that $\sum_{i=1}^n\lambda_i=1$. Then there exists a well defined element of $Sof(G,P^\omega)$ denoted by $\sum_{i=1}^n\lambda_i[\Theta_i]$ such that the axioms of convex-like structures are observed.  
\end{p}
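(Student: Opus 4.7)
The plan is to realise $\sum_{i=1}^n \lambda_i [\Theta_i]$ by an explicit block-diagonal construction. Fix representatives $p_k^{(i)} : G \to P_{n_k^{(i)}}$ with $\Theta_i(g) = [p_k^{(i)}(g)]_k$ in $\Pi_{k\to\omega} P_{n_k^{(i)}}$. For each $k$ choose an integer $N_k$ (growing fast enough) and integers $m_k^{(i)}$ with $\sum_i m_k^{(i)} = N_k$, $n_k^{(i)} \mid m_k^{(i)}$, and $m_k^{(i)}/N_k \to \lambda_i$ along $\omega$. Partition $\{1,\ldots,N_k\}$ into consecutive blocks $B_k^{(1)},\ldots,B_k^{(n)}$ of sizes $m_k^{(i)}$ and let $q_k(g) \in P_{N_k}$ act on $B_k^{(i)}$ by $m_k^{(i)}/n_k^{(i)}$ disjoint copies of $p_k^{(i)}(g)$. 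Set $\Theta := [q_k]_k \in \Pi_{k\to\omega} P_{N_k}$.

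Next I verify $\Theta$ is a sofic representation. Since each $p_k^{(i)}$ is asymptotically multiplicative along $\omega$ and the block structure is preserved under products, the Hamming defect obeys
$d_H\bigl(q_k(gh), q_k(g)q_k(h)\bigr) \le \sum_i (m_k^{(i)}/N_k)\, d_H\bigl(p_k^{(i)}(gh), p_k^{(i)}(g)p_k^{(i)}(h)\bigr) \to 0$
along $\omega$, so $\Theta : G \to \Pi_{k\to\omega} P_{N_k}$ is an honest group homomorphism, and a routine trace computation yields $Tr(\Theta(g)) = \sum_i \lambda_i\, Tr(\Theta_i(g))$ for every $g \in G$, matching the expected marginal data.

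The main obstacle is well-definedness of $[\Theta]$ in $Sof(G,P^\omega)$, i.e.\ independence of all choices: the representatives $p_k^{(i)}$ (up to the equivalence relation used in $Sof(G,P^\omega)$), the cloning multiplicity $m_k^{(i)}/n_k^{(i)}$, the block sizes $m_k^{(i)}$ themselves, and the assignment of indices to blocks. Two choices of block sizes both converging to $\lambda_i$ differ by $o(N_k)$, and one can conjugate the resulting block-diagonal permutations by an element of $P_{N_k}$ that realigns the blocks (modifying it on a symmetric difference of size $o(N_k)$, which is Hamming-negligible), producing a permutation in $\Pi_{k\to\omega} P_{N_k}$ that intertwines the two constructions. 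The amplification by cloning $p_k^{(i)}$ within a block of size $m_k^{(i)} = (m_k^{(i)}/n_k^{(i)}) \cdot n_k^{(i)}$ gives the same class as $\Theta_i$ inside the block, by the standard fact that $p \oplus p \oplus \cdots \oplus p$ in $P_{rn}$ is unitarily equivalent to $p$ in the ultraproduct. Finally, replacing some $\Theta_i$ by an equivalent $\Theta_i'$ is handled by lifting the block-wise intertwiner to a permutation in $P_{N_k}$ that is the identity outside $B_k^{(i)}$.

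The axioms of a convex-like structure then follow by inspection: permuting the summation index corresponds to permuting the blocks; the collapse rule $\lambda[\Theta]+(1-\lambda)[\Theta]=[\Theta]$ is a merger of two adjacent blocks carrying the same local representation; iterated combination $\sum_i \lambda_i \bigl[\sum_j \mu_{ij}[\Theta_{ij}]\bigr] = \sum_{i,j} \lambda_i\mu_{ij}[\Theta_{ij}]$ reduces to subdividing each block into sub-blocks with proportions $\mu_{ij}$; and the degenerate case $\lambda_i = 0$ simply removes a block of vanishing relative size. Each of these operations is implemented by conjugation with an element of $P_{N_k}$, hence yields an equality in $Sof(G,P^\omega)$.
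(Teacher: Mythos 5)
Your block-diagonal construction is exactly the direct-sum-of-amplifications definition used in \cite{Pa2} (Section 2.2), so the overall approach matches the source. The outline is correct, but the well-definedness step --- which you rightly flag as the main obstacle --- is treated a bit too casually in two places. First, requiring $n_k^{(i)} \mid m_k^{(i)}$ together with $\sum_i m_k^{(i)} = N_k$ exactly is generally impossible (e.g.\ for irrational $\lambda_i$); the standard fix is to allow an $o(N_k)$ remainder filled with identity, which is Hamming-negligible and so does not change the class or the trace computation. Second, and more substantively, your handling of replacing $\Theta_i$ by an equivalent $\Theta_i'$ does not work as stated: the conjugating permutation $u_k$ witnessing $[\Theta_i]=[\Theta_i']$ lives in $P_{n_k^{(i)} r_k}$ for an amplification sequence $(r_k)$ that is determined only after the equivalence is given, and there is no reason for $n_k^{(i)} r_k$ to divide the block size $m_k^{(i)}$ you already fixed. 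You cannot simply ``lift the block-wise intertwiner to a permutation in $P_{N_k}$ that is the identity outside $B_k^{(i)}$.'' The correct move is to first amplify the whole block construction by $r_k$ --- which is permitted since $[\Theta]$ is only defined up to conjugacy after amplification --- so that block $i$ has size $m_k^{(i)} r_k$, a multiple of $n_k^{(i)} r_k$; then tile that block with $m_k^{(i)}/n_k^{(i)}$ copies of $u_k$ and extend by identity. The same amplification trick is implicitly needed to compare two constructions built on different $N_k$'s and block sizes (amplify one by $\tilde N_k$ and the other by $N_k$, then match copies block by block). With those repairs the argument is sound and is the standard one.
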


The reverse operation to taking convex combinations is cutting with a commuting projection. Here we recall the construction of a cut sofic representation, as it is central to the paper.

\begin{de}
Let $\Theta:G\to\Pi_{k\to\omega}P_{n_k}$ be a sofic representation and $p\in\Pi_{k\to\omega}D_{n_kr_k}$ be a projection commuting with $\Theta\otimes Id_{r_k}$. We denote by $\Theta_p$ the map $p(\Theta\otimes Id_{r_k}):G\to\Pi_{k\to\omega}P_{m_k}$, where $m_k$ are natural numbers such that
$Tr(p)=\lim_{k\to\omega}\frac{m_k}{n_kr_k}$. The sofic representation $\Theta_p$ depends on the choice of these numbers, but its class in $Sof(G,P^\omega)$ does not.

We call such a $p\in\Pi_{k\to\omega}D_{n_kr_k}$ a \emph{cutting projection of $\Theta$}, and we denote the set of such projections by $\cc(\Theta)$.
\end{de}

\begin{ob}
$[\Theta_p]=[\Theta_{p\otimes Id_{r_k}}]$; $Tr(p)=Tr(p\otimes Id_{r_k})$.
\end{ob}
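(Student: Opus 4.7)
The plan is to handle the two assertions separately, the second being immediate from general properties of the trace and the first reducing, after a direct computation, to the standard fact that amplification by an identity preserves the sofic equivalence class.

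I first establish the trace identity. The normalized trace on $\Pi_{k\to\omega}D_{n_kr_k^2}\simeq \Pi_{k\to\omega}(D_{n_kr_k}\otimes D_{r_k})$ is induced by the tensor product of the normalized traces at each finite level, and the normalized trace of the identity matrix equals $1$. Therefore
\[
Tr(p\otimes Id_{r_k})=Tr(p)\cdot Tr(Id_{r_k})=Tr(p).
\]

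For the equality of sofic classes, I would first verify that $p\otimes Id_{r_k}\in\Pi_{k\to\omega}D_{n_kr_k^2}$ is indeed a cutting projection of $\Theta$, with amplification parameter $r_k^2$: tensoring the commutation relation $p(\Theta\otimes Id_{r_k})=(\Theta\otimes Id_{r_k})p$ with $Id_{r_k}$ on the right yields commutativity with $\Theta\otimes Id_{r_k^2}=\Theta\otimes Id_{r_k}\otimes Id_{r_k}$. Choosing $\tilde m_k:=m_kr_k$, which satisfies $\lim_{k\to\omega}\tilde m_k/(n_kr_k^2)=Tr(p)=Tr(p\otimes Id_{r_k})$, the definitions give
\[
\Theta_{p\otimes Id_{r_k}}=(p\otimes Id_{r_k})(\Theta\otimes Id_{r_k}\otimes Id_{r_k})=\bigl(p(\Theta\otimes Id_{r_k})\bigr)\otimes Id_{r_k}=\Theta_p\otimes Id_{r_k}.
\]

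It then remains to invoke the standard fact that $[\Pi\otimes Id_{r_k}]=[\Pi]$ in $Sof(G,P^\omega)$ for any sofic representation $\Pi$: the canonical block-diagonal embedding $\Pi_{k\to\omega}P_{m_k}\hookrightarrow \Pi_{k\to\omega}P_{m_kr_k}$ via $x\mapsto x\otimes Id_{r_k}$ yields a unitarily (permutation-) equivalent representation, so the two sides define the same class in the convex-like structure of \cite{Pa2}. The only potential obstacle is the coherence of the choice of the sequences $m_k$ and $\tilde m_k$, but this is handled by the last sentence of the preceding definition, which states that $[\Theta_p]$ does not depend on the particular choice of $m_k$ realizing the trace.
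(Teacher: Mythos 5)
The paper states this as an unproved Observation; it is treated as a direct consequence of the definitions and the equivalence relation on $Sof(G,P^\omega)$. Your proof is correct and fills in exactly what the paper leaves implicit: the trace identity via multiplicativity of the normalized trace under tensor products, the algebraic identity $\Theta_{p\otimes Id_{r_k}}=\Theta_p\otimes Id_{r_k}$, and the reduction to the basic fact that $[\Pi\otimes Id_{r_k}]=[\Pi]$. That last fact is indeed immediate here, since the equivalence defining classes in $Sof(G,P^\omega)$ (conjugacy after amplification, as in \cite{Pa2}) makes a representation trivially equivalent to its own amplification by the identity conjugator; it should not be confused with the much harder Open Problem \ref{open problem} later in the paper, which concerns removing a nontrivial conjugator from the amplified level. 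Your handling of the dimension sequences $m_k$ and $\tilde m_k=m_kr_k$ is also the right bookkeeping. In short: correct, and essentially the intended (implicit) argument.
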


\begin{ob}(Observation 2.5 of \cite{Pa2})\label{obs_cut}
If $[\Theta]=\lambda[\Psi_1]+(1-\lambda)[\Psi_2]$ then there exists $p$ a cutting projection such that $[\Theta_p]=[\Psi_1]$.
\end{ob}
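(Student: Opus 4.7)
The strategy is to realise $\lambda[\Psi_1]+(1-\lambda)[\Psi_2]$ by an explicit block-diagonal sofic representation in which the first block corresponds to a diagonal cutting projection of trace $\lambda$, and then to transport this projection back through the equivalence $[\Theta]=\lambda[\Psi_1]+(1-\lambda)[\Psi_2]$ using the fact that equivalence in $Sof(G,P^\omega)$ is implemented by permutations in the universal sofic group.

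First I would unpack the construction of the convex combination from Section~2.2 of \cite{Pa2}. Choose representatives $\Psi_i:G\to\Pi_{k\to\omega}P_{a^i_k}$ (amplifying if necessary) so that $\lim_{k\to\omega}\frac{a^1_k}{a^1_k+a^2_k}=\lambda$. Then $\lambda[\Psi_1]+(1-\lambda)[\Psi_2]$ is represented by the block-diagonal map $\Phi(g)=\Psi_1(g)\oplus\Psi_2(g)\in\Pi_{k\to\omega}P_{a^1_k+a^2_k}$. Let $q\in\Pi_{k\to\omega}D_{a^1_k+a^2_k}$ be the diagonal projection onto the first block. Then $q$ has trace $\lambda$, commutes with every $\Phi(g)$, and the cut $\Phi_q$ is tautologically $\Psi_1$. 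So the statement is already proved for $\Theta=\Phi$.

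Next I would transport this projection to $\Theta$. Since $[\Theta]=[\Phi]$ in $Sof(G,P^\omega)$, the definition of equivalence supplies amplification factors $r_k,s_k$ with $n_kr_k=(a^1_k+a^2_k)s_k$ together with a permutation $u\in\Pi_{k\to\omega}P_{n_kr_k}$ such that $u(\Theta\otimes Id_{r_k})(g)u^{*}=(\Phi\otimes Id_{s_k})(g)$ for every $g\in G$. Set $p:=u^{*}(q\otimes Id_{s_k})u$. Because $u$ is a permutation matrix and $q\otimes Id_{s_k}$ is diagonal, conjugation by $u$ preserves the diagonal MASA, so $p\in\Pi_{k\to\omega}D_{n_kr_k}$. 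By construction $p$ commutes with $\Theta\otimes Id_{r_k}$ and has trace $\lambda$, so $p\in\cc(\Theta)$. Unwinding the definition of the cut, $\Theta_p$ is conjugate by $u$ to $\Phi_{q\otimes Id_{s_k}}=\Phi_q=\Psi_1$, hence $[\Theta_p]=[\Psi_1]$.

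The only delicate point is verifying that the equivalence $[\Theta]=[\Phi]$ produces a conjugating \emph{permutation} $u$, not merely an abstract unitary; this is precisely the convention by which equivalence in the universal sofic group is set up in Section~2 of \cite{Pa2}. Once this is granted, the rest of the argument is essentially a tautology: $q$ already cuts $\Phi$ to $\Psi_1$ by construction, and permutation conjugation preserves everything relevant (the diagonal subalgebra, the normalised trace, and the commutation relations), which is exactly what is needed for $p$ to qualify as a cutting projection of $\Theta$ realising $\Psi_1$.
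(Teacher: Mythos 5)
Your proof is correct, and it is essentially the standard (and likely only natural) argument for this observation: unpack the convex combination as a block-diagonal direct sum $\Phi=\Psi_1\oplus\Psi_2$, observe that the block-diagonal projection $q$ cuts $\Phi$ to $\Psi_1$, and transport $q$ back through the conjugating permutation $u$ witnessing $[\Theta]=[\Phi]$; since conjugation by a permutation preserves the diagonal MASA, the trace, and the commutation with $\Theta\otimes Id$, the resulting $p=u^{*}(q\otimes Id)u$ is a genuine cutting projection of $\Theta$ with $[\Theta_p]=[\Psi_1]$. This matches the mechanism behind Observation 2.5 of \cite{Pa2}, and your explicit verification that $u$ carries $\mathrm{supp}(p)$ onto $\mathrm{supp}(q\otimes Id)$, so that the cuts are conjugate, is exactly the point that needs to be checked.
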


\begin{ob}\label{obs_doublecut}
If $p$ and $q$ are two cutting projections in the same sequence of dimensions, then $[(\Theta_p)_q]=[\Theta_{pq}]$.
\end{ob}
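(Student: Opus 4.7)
The plan is to show that both $(\Theta_p)_q$ and $\Theta_{pq}$ arise, up to equivalence, as restrictions of the same permutation action to the same invariant subset, which in turn produce the same class in $Sof(G,P^\omega)$.

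First I would unpack the cutting operation at the level of representatives. Choose lifts $p_k, q_k \in D_{n_k r_k}$ of $p$ and $q$, and let $A_k, B_k \subseteq \{1,\dots,n_k r_k\}$ denote their supports. Since $p$ commutes with $\Theta\otimes Id_{r_k}$ in the ultraproduct, one gets representatives $\Theta_k$ for which the permutations $\Theta_k\otimes Id_{r_k}$ preserve $A_k$ for $\omega$-many $k$, and likewise $B_k$, hence also $A_k\cap B_k$. Observe that $A_k\cap B_k$ is exactly the support of $p_kq_k$, a lift of $pq$ (using that $D_{n_kr_k}$ is abelian so that $pq$ is a projection, automatically commuting with $\Theta\otimes Id_{r_k}$). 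In particular, $pq\in\cc(\Theta)$, so $\Theta_{pq}$ is defined.

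Next I would identify $\Theta_{pq}$ with the restriction of $\Theta\otimes Id_{r_k}$ to $A_k\cap B_k$ (after any bijective relabeling to $[l_k]$ with $\lim_{k\to\omega}l_k/(n_kr_k)=Tr(pq)$); this is precisely the definition. For the other side, recall that $\Theta_p$ is the restriction of $\Theta\otimes Id_{r_k}$ to $A_k$, relabeled to $[m_k]$. Under this relabeling, $q_k$ restricted to $A_k$ becomes a diagonal projection $\tilde q_k\in D_{m_k}$ that commutes with $\Theta_p$, i.e.\ $[\tilde q]\in\cc(\Theta_p)$, and cutting by it gives the restriction of $\Theta_p$ to $A_k\cap B_k$ (relabeled once more). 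This restriction coincides with the restriction of $\Theta\otimes Id_{r_k}$ to $A_k\cap B_k$, showing $[(\Theta_p)_q]=[\Theta_{pq}]$.

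The main point that requires care is that the intermediate relabelings (from $A_k$ to $[m_k]$, and then from the support of $\tilde q_k$ in $[m_k]$ to the final index set) do not affect the class in $Sof(G,P^\omega)$. This is exactly the content of the well-definedness clause in the preceding definition together with Observation \ref{obs_cut}-style reasoning, so no new work is needed there. Verifying that the trace is consistent, $Tr(\tilde q)=Tr(pq)/Tr(p)$ and hence $Tr(pq)=\lim_{k\to\omega}l_k/(n_kr_k)$, is a direct calculation from the ultralimit definition of the trace on $\Pi_{k\to\omega}D_{n_kr_k}$, and guarantees that both constructions produce sofic representations of the same asymptotic dimension.
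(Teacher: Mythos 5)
Your proof is correct and follows essentially the same route the paper takes: identify $(\Theta_p)_q$ as the restriction of $\Theta\otimes Id_{r_k}$ to the intersection of the supports of $p_k$ and $q_k$, observe that this is by definition the cut by $p_kq_k$, and invoke the well-definedness of the class under relabeling. The paper's own justification is the terse "algebraic consequence of the definition" followed by the same bookkeeping discussion about viewing $(\Theta_p)_q$ as acting on subsets of $\{1,\ldots,n_kr_k\}$; you have simply spelled out those details more explicitly (the only small misattribution is citing Observation \ref{obs_cut} for the relabeling invariance, which is really the well-definedness clause in the definition of $\Theta_p$, but you also say this correctly in the same sentence).
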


This last observation is just an algebraic consequence of the definition. However there is a discussion here to be made. When we write $a=\Pi_{k\to\omega}a_k\in\Pi_{k\to\omega}P_{n_k}$ we think of the permutations $a_k\in P_{n_k}$ as acting on the set $\{1,\ldots,n_k\}$. But, for $\Theta_p(g)=p(\Theta(g)\otimes Id_{r_k})$, $(\Theta_p(g))_k$ is a permutation on the support of $p_k$. As a projection in $D_{n_kr_k}$, $p_k$ is a projection on some subset of $\{1,\ldots,n_kr_k\}$. It is on these subsets that $\Theta_p(g)$ is acting as an ultraproduct of permutations. Having this in mind, now $(\Theta_p)_q$ makes sense. 

\subsection{Infinite convex combinations}

In this paper we also use infinite convex combinations. They are constructed similarly as in Section 2.2 of \cite{Pa2}, plus a diagonal argument.

\begin{p}\label{infinite}
Let $(\Theta_i)_{i\in\nz^*}$ be a sequence of sofic representations of a countable group $G$, and let $\lambda_i\in[0,1]$ be such that $\sum_{i\in\nz^*}\lambda_i=1$.
Then there exist a sofic representation $\Psi:G\to\Pi_{k\to\omega}P_{m_k}$ such that for each $i\in\nz^*$ there exists a cutting projection $p_i\in\Pi_{k\to\omega}D_{m_k}$ such that $Tr(p_i)=\lambda_i$, $\sum_ip_i=Id$ and $[\Psi_{p_i}]=[\Theta_i]$.
\end{p}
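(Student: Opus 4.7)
The plan is to extend the construction of finite convex combinations (Proposition~2.2 of \cite{Pa2}) by a diagonal argument over $\omega$. At each level $k$ one can only assemble finitely many permutations, so the idea is to use only the first $N_k$ of the $\Theta_i$'s, with $N_k \to \infty$ along $\omega$, so that the tail $\sum_{i > N_k}\lambda_i$ is absorbed into a remainder block of relative size tending to zero.

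For each $i \in \nz^*$, write $\Theta_i: G \to \Pi_{k\to\omega}P_{n_k^{(i)}}$ and fix representatives $\Theta_i^{(k)}(g) \in P_{n_k^{(i)}}$. Pick $N_k \to \infty$ as $k \to \omega$. At level $k$, choose $m_k > N_k \cdot \max_{i \leq N_k} n_k^{(i)}$, and for $i \leq N_k$ set $s_i^{(k)} := n_k^{(i)} \lfloor \lambda_i m_k / n_k^{(i)} \rfloor$; then $s_i^{(k)}$ is divisible by $n_k^{(i)}$, $0 \leq \lambda_i - s_i^{(k)}/m_k < n_k^{(i)}/m_k < 1/N_k$, and $\sum_{i \leq N_k} s_i^{(k)} \leq m_k$. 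Partition $\{1,\ldots,m_k\}$ into disjoint blocks $S_i^{(k)}$ of size $s_i^{(k)}$ for $i \leq N_k$, plus a remainder $R^{(k)}$. Define $\Psi^{(k)}(g) \in P_{m_k}$ to act on $S_i^{(k)}$ as the direct sum of $s_i^{(k)}/n_k^{(i)}$ copies of $\Theta_i^{(k)}(g)$, and as the identity on $R^{(k)}$. Put $\Psi := \Pi_{k\to\omega}\Psi^{(k)}$ and $p_i := \Pi_{k\to\omega}\mathbf{1}_{S_i^{(k)}}$, where $\mathbf{1}_{S_i^{(k)}}$ is set to $0$ whenever $i > N_k$.

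To see $\Psi$ is a group homomorphism into $\Pi_{k\to\omega}P_{m_k}$, fix $g,h \in G$ and $\varepsilon > 0$, choose $I$ with $\sum_{i > I}\lambda_i < \varepsilon$, and restrict to $\{k : N_k \geq I\} \in \omega$. Since amplification preserves the normalised Hamming distance,
\[
d_H\!\left(\Psi^{(k)}(g)\Psi^{(k)}(h),\,\Psi^{(k)}(gh)\right) \;=\; \sum_{i \leq N_k}\frac{s_i^{(k)}}{m_k}\,d_H\!\left(\Theta_i^{(k)}(g)\Theta_i^{(k)}(h),\,\Theta_i^{(k)}(gh)\right).
\]
The contribution of $i \leq I$ tends to $0$ along $\omega$ (each $\Theta_i$ is a homomorphism in the ultraproduct), while the tail $I < i \leq N_k$ is bounded by $\sum_{i > I}\lambda_i < \varepsilon$. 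Each $p_i$ commutes with $\Psi$ because $\Psi^{(k)}$ preserves $S_i^{(k)}$, and $Tr(p_i) = \lim_{k\to\omega} s_i^{(k)}/m_k = \lambda_i$. The $p_i$ are pairwise orthogonal with $\sum_i Tr(p_i) = 1 = Tr(Id)$, hence $\sum_i p_i = Id$ in the finite von Neumann algebra $\Pi_{k\to\omega}D_{m_k}$. Finally, $\Psi_{p_i}^{(k)}$ is, by construction, the $(s_i^{(k)}/n_k^{(i)})$-fold amplification of $\Theta_i^{(k)}$, so the amplification invariance $[\Theta_p] = [\Theta_{p \otimes Id_{r_k}}]$ gives $[\Psi_{p_i}] = [\Theta_i]$.

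The main obstacle is precisely the diagonal step in the homomorphism check: one must control infinitely many convex summands uniformly in the group relations. This ultimately reduces to the convergence of $\sum\lambda_i$, which guarantees that for each fixed pair $(g,h)$ only finitely many summands contribute above any prescribed error, and those finitely many contributions vanish in Hamming distance along $\omega$.
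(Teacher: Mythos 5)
Your construction is exactly the diagonal argument the paper alludes to (it only says ``constructed similarly as in Section 2.2 of \cite{Pa2}, plus a diagonal argument''), and your block-diagonal assembly with truncation parameter $N_k\to\infty$ along $\omega$, together with the $\varepsilon$-splitting of the sum into a finite head and a tail of mass $<\varepsilon$, is the right way to make that sketch precise; the estimates you give are correct. The one thing you verify for $\Psi$ is that it is a group homomorphism into $\Pi_{k\to\omega}P_{m_k}$, but membership in $Sof(G,P^\omega)$ also requires the soficity condition $Tr(\Psi(g))=0$, equivalently $d_H(\Psi^{(k)}(g),Id_{m_k})\to 1$ along $\omega$, for every $g\neq e$; this follows by the same head--tail split applied to the identity
\[
d_H\bigl(\Psi^{(k)}(g),Id_{m_k}\bigr)=\sum_{i\leq N_k}\frac{s_i^{(k)}}{m_k}\,d_H\bigl(\Theta_i^{(k)}(g),Id_{n_k^{(i)}}\bigr),
\]
using that $s_i^{(k)}/m_k\to\lambda_i$ and each $d_H(\Theta_i^{(k)}(g),Id)\to 1$, so that the $\liminf$ exceeds $\sum_{i\leq I}\lambda_i$ for every $I$. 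You should add that short paragraph so that $\Psi$ is genuinely a sofic representation and not merely a homomorphism.
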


\subsection{Order relation on $\cc(\Theta)$} There is a partial relation for cutting projections. Firstly we consider a projection equivalent to any of its amplifications, i.e. $p\simeq p\otimes Id_{r_k}$.

\begin{de}
Let $\Theta$ be some sofic representation and let $p,q\in\cc(\Theta)$. Then $p\leqslant q$ if they have amplifications $p_1,q_1$ to the same sequences of dimensions such that $p_1q_1=p_1$.
\end{de}

Let $\Theta:G\to\Pi_{k\to\omega}P_{n_k}$. Sometimes is useful to view a cutting projection of $\Theta$ as an element of $\Pi_{k\to\omega}D_{n_k}$ instead of using amplifications. For a finite von Neumann algebra $(N,Tr)$ we denote by $N_+^1=\{x\in N:0\leqslant x\leqslant 1\}$. We also need a notation for the \emph{commutant}, a central tool of the article.

\begin{nt}
For a map $\Theta:G\to\Pi_{k\to\omega}P_{n_k}$, we denote by $\Theta^\prime$ the commutant of the image of $\Theta$ in $\Pi_{k\to\omega}M_{n_k}$, i.e.:
\[\Theta^\prime=\{x\in\Pi_{k\to\omega}M_{n_k}:xa=ax\ \forall a\in\Theta(G)\}.\]
\end{nt}

\begin{p}
For a sofic representation $\Theta:G\to\Pi_{k\to\omega}P_{n_k}$ we have $\cc(\Theta)\simeq(\Pi_{k\to\omega}D_{n_k})_+^1\cap\Theta^\prime$.
\end{p}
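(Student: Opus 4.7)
The plan is to construct natural maps in both directions and verify they implement the identification. For the forward map, define $\Phi:\cc(\Theta)\to(\Pi_{k\to\omega}D_{n_k})_+^1\cap\Theta'$ by sending a cutting projection $p=\Pi_{k\to\omega}p_k\in\Pi_{k\to\omega}D_{n_kr_k}$ to the element obtained by applying the trace-preserving conditional expectation $E_k:=\id\otimes Tr_{r_k}:D_{n_k}\otimes D_{r_k}\to D_{n_k}$ componentwise, namely $\Phi(p):=\Pi_{k\to\omega}E_k(p_k)$. This is automatically a positive contraction in $\Pi D_{n_k}$. To see $\Phi(p)\in\Theta'$, extend $E_k$ to $M_{n_k}\otimes D_{r_k}\to M_{n_k}$; it is then a bimodule map over $M_{n_k}\otimes 1$, so $E_k([p_k,\Theta(g)_k\otimes Id_{r_k}])=[E_k(p_k),\Theta(g)_k]$, and the ultralimit commutation $[p_k,\Theta(g)_k\otimes Id_{r_k}]\to_\omega 0$ descends to $[E_k(p_k),\Theta(g)_k]\to_\omega 0$ since $E_k$ is $\|\cdot\|_2$-contractive. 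Multiplicativity of the partial trace on tensors gives $\Phi(p\otimes Id_{s_k})=\Phi(p)$, so $\Phi$ is amplification-invariant.

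For the reverse map, given $x=\Pi x_k\in(\Pi D_{n_k})_+^1\cap\Theta'$ with $x_k=\sum_i\alpha_k(i)e_{ii}$, choose a sequence $r_k\to\infty$ (say $r_k=k$) and set
\[
p_k:=\sum_{i=1}^{n_k} e_{ii}\otimes f_{k,i}\in D_{n_k}\otimes D_{r_k},
\]
where $f_{k,i}\in D_{r_k}$ is the diagonal projection onto $\{1,\ldots,\lfloor\alpha_k(i)r_k\rfloor\}$; put $\Psi(x):=\Pi_{k\to\omega}p_k$. Clearly $Tr(p_k)\to Tr(x)$ up to $O(1/r_k)$. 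The essential step is to verify commutation with $\Theta\otimes Id_{r_k}$ in the ultraproduct. Writing $\sigma_k=\Theta(g)_k$, a direct computation on diagonal entries yields
\[
\|[p_k,\sigma_k\otimes Id_{r_k}]\|_2^2 = \frac{1}{n_kr_k}\sum_i\bigl|\lfloor\alpha_k(i)r_k\rfloor - \lfloor\alpha_k(\sigma_k^{-1}(i))r_k\rfloor\bigr| \leq \|[x_k,\sigma_k]\|_2 + \frac{1}{r_k},
\]
using $|\lfloor a\rfloor-\lfloor b\rfloor|\leq|a-b|+1$ to bound each summand by $r_k|\alpha_k(i)-\alpha_k(\sigma_k^{-1}(i))|+1$, and then Cauchy--Schwarz to turn the resulting $L^1$-sum into the $L^2$-commutator $\|[x_k,\sigma_k]\|_2$. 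Since $x\in\Theta'$ kills the first term in the ultralimit and $1/r_k\to 0$, we conclude $\Psi(x)\in\cc(\Theta)$.

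Mutual invertibility of $\Phi$ and $\Psi$ follows by comparing formulas: $\Phi(\Psi(x))_k=\sum_i(\lfloor\alpha_k(i)r_k\rfloor/r_k)e_{ii}$ differs entrywise from $x_k$ by at most $1/r_k$, so $\Phi\circ\Psi=\id$ in the ultralimit. For $\Psi\circ\Phi$, the projections $\Psi(\Phi(p))$ and $p$ share the same partial trace $\Phi(p)$, and after suitable joint amplification they are identified in $\cc(\Theta)$ under the equivalence $p\simeq p\otimes Id_{s_k}$. The main obstacle is the commutator estimate used to build $\Psi$: the floor function is discontinuous, but its rounding error is only of size one per row, and the key trick is to bound the $L^2$-commutator of the rounded projection by the $L^1$-average of $|\alpha_k(i)-\alpha_k(\sigma_k^{-1}(i))|$ (plus $1/r_k$), then apply Cauchy--Schwarz to recover the vanishing $L^2$-commutator norm of $x$ with $\Theta(g)$.
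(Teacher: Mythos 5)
Your forward map is exactly the paper's: the partial trace $\id\otimes Tr_{r_k}$ is literally $f_k=\frac{1}{r_k}\sum_i p_k^i$ in the paper's block notation, and both arguments use the $L^2$-contractivity and bimodule property to transfer the commutation. For the reverse direction the paper simply calls it ``an easy analysis exercise,'' and your rounding construction with the floor-function commutator estimate is a correct and careful way to carry out that exercise, so the approach is essentially the same.
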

\begin{proof}
Let $p\in\Pi_{k\to\omega}D_{n_kr_k}$ be a projection commuting with $\Theta\otimes Id_{r_k}$. Then $p=\Pi_{k\to\omega}p_k$, where $p_k$ is a projection in $D_{n_kr_k}$. As such $p_k=\sum_{i=1}^{r_k} p_k^i$, with $p_k^i\in D_{n_k}$. Define $f_k=\frac1{r_k}\sum_ip_k^i$, and $f=\Pi_{k\to\omega}f_k$. It is easy to see that $0\leqslant f\leqslant Id$. Also $p$ commuting with $\Theta$ implies $f$ commutes with $\Theta$.

For the reverse, is an easy analysis exercise to construct a cutting projection given an element in $(\Pi_{k\to\omega}D_{n_k})_+^1\cap\Theta^\prime$.
\end{proof}

\begin{ob}
For $p,q\in\cc(\Theta)$, we have $p\leqslant q$ if and only if $f_p\leqslant f_q$, where $f_p,f_q$ are the associated elements in $\Pi_{k\to\omega}D_{n_k}$.
\end{ob}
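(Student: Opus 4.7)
My plan is to prove each direction separately. For the forward direction, I would amplify $p$ and $q$ to common dimensions $n_kt_k$; after replacing the representative $p_1^{(k)}$ by $p_1^{(k)}q_1^{(k)}$ if necessary, we have $p_1^{(k)}\leqslant q_1^{(k)}$ pointwise in $D_{n_kt_k}=D_{n_k}\otimes D_{t_k}$ at every $k$. Slicing in the second tensor factor, this inequality descends to $(p_1^{(k)})^i\leqslant(q_1^{(k)})^i$ in $D_{n_k}$ for each $i\in\{1,\dots,t_k\}$. Averaging over $i$ yields $f_{p_1}\leqslant f_{q_1}$, and since amplification preserves the associated diagonal element, $f_p\leqslant f_q$.

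For the reverse direction, the idea is to build new cutting projections $p',q'\in\Pi_{k\to\omega}D_{n_kr_k}$ representing the same equivalence classes as $p$ and $q$ but nested at the same dimension. Choose pointwise representatives $0\leqslant f_p^{(k)}\leqslant f_q^{(k)}\leqslant 1$ in $D_{n_k}$ (possible since the inequality holds in the ultraproduct; pass to the $\min$ if needed), and fix a rapidly growing sequence $r_k\to\infty$. Define a ladder construction: for each $j\in\{1,\dots,n_k\}$, let $p'_k$ be the indicator of the initial segment $\{1,\dots,\lceil r_kf_p^{(k)}(j)\rceil\}$ of the $j$-th copy of $D_{r_k}$, and similarly let $q'_k$ be the indicator of the initial segment of length $\lceil r_kf_q^{(k)}(j)\rceil$. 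The nested choice forces $p'_k\leqslant q'_k$ for every $k$, hence $p'q'=p'$ in the ultraproduct.

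Two verifications remain. First, $p'$ and $q'$ lie in $\cc(\Theta)$: for $\sigma=\Theta_k(g)$, an integer-rounding bookkeeping gives
\[\|[p'_k,\sigma\otimes\id_{r_k}]\|_2^2\leqslant\frac{1}{n_k}\sum_j|f_p^{(k)}(j)-f_p^{(k)}(\sigma^{-1}(j))|+\frac{1}{r_k},\]
which by Cauchy-Schwarz is bounded by $\|[f_p^{(k)},\sigma]\|_2+1/r_k$, tending to $0$ along $\omega$ because $f_p\in\Theta'$; the same applies to $q'$. Second, averaging in the $r_k$-coordinate yields $f_{p'_k}(j)=\lceil r_kf_p^{(k)}(j)\rceil/r_k=f_p^{(k)}(j)+O(1/r_k)$, so $f_{p'}=f_p$ in the ultraproduct, and similarly $f_{q'}=f_q$. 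By the bijection of the previous proposition, $p'\simeq p$ and $q'\simeq q$ in $\cc(\Theta)$, and the nested relation $p'\leqslant q'$ then delivers $p\leqslant q$. The only step calling for attention is the commutator estimate; it rests solely on the commuting hypothesis for $f_p,f_q$ together with Cauchy-Schwarz.
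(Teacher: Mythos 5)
The paper presents this as an Observation with no written proof, so there is no official argument to compare against; what you give is a reasonable reconstruction. The forward direction is exactly the expected argument: the relation $p_1q_1=p_1$ for diagonal projections is a coordinatewise inequality, and the map $p\mapsto f_p$ is the fibrewise average, which preserves coordinatewise $\leqslant$; amplification by $\otimes\,Id$ clearly does not change the average. That part is clean.

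For the reverse direction your ladder construction is sound, and the commutator estimate (symmetric difference of initial segments bounded by $\frac1{n_k}\sum_j|f_p^{(k)}(j)-f_p^{(k)}(\sigma^{-1}j)|+1/r_k$, then Cauchy--Schwarz and $f_p\in\Theta'$) correctly shows that $p'$ and $q'$ are cutting projections with the prescribed $f$-values. The one point worth flagging is the final appeal ``by the bijection of the previous proposition, $p'\simeq p$''. What you have shown is $f_{p'}=f_p$; for this to give $p'\simeq p$ you need the identification $\cc(\Theta)\simeq(\Pi_{k\to\omega}D_{n_k})^1_+\cap\Theta'$ to be injective, i.e.\ that the equivalence on cutting projections is precisely ``same $f$''. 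The paper's Proposition asserts this (its proof only sketches both directions of the bijection), and you also need that the order $\leqslant$ descends to these equivalence classes. Both are implicit in the paper's framework, so your reliance on them is in keeping with the text, but it is the load-bearing step and it would be better to say explicitly that you are taking the Proposition's ``$\simeq$'' to mean equality of the associated $f$'s, rather than literal equality of tensor amplifications (which would \emph{not} follow from $f_{p'}=f_p$, e.g.\ two half-density projections supported on disjoint halves of the fibre).
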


\section{The face of a sofic representation}

This section is dedicated to the study of faces of $Sof(G,P^\omega)$ given by a sofic representation.

\begin{de}
We denote by $F_{[\Theta]}$ the set $\{[\Theta_p]:  p\text{ a cutting projection of }\Theta \}.$
\end{de}
The goal of this section is to prove that  $F_{[\Theta]}$ is the  minimal face containing $[\Theta]$. This is an adaptation of results in \cite{Ak} to the convex structure on sofic embeddings. First, some preliminaries.

\begin{lemma}\label{lemma_2proj}
If $p,q$ are disjoint cutting projections in the same sequence of dimensions then: 
\[[\Theta_{p+q}]= \frac{Tr(p)}{Tr(p+q)}[\Theta_{p}]+\frac{Tr(q)}{Tr(p+q)}[\Theta_{q}].\]
\end{lemma}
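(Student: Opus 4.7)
The plan is to unfold the definition of $\Theta_{p+q}$ and recognise that, because $p$ and $q$ are disjoint, the permutation $(\Theta_{p+q}(g))_k$ is literally the disjoint union of the two permutations $(\Theta_p(g))_k$ and $(\Theta_q(g))_k$; this is precisely the construction of a convex combination in $Sof(G,P^\omega)$ from Proposition 2.2 of \cite{Pa2}.

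More concretely, I would begin by fixing a common sequence of dimensions: pick representatives $p=\Pi_{k\to\omega}p_k$, $q=\Pi_{k\to\omega}q_k$ in $\Pi_{k\to\omega}D_{n_kr_k}$ with $p_kq_k=0$ for every $k$, so that $p+q$ is again a (genuine) projection commuting with $\Theta\otimes Id_{r_k}$. Write $S_k^p,S_k^q\subset\{1,\ldots,n_kr_k\}$ for the supports of $p_k$ and $q_k$; these are disjoint, and the support of $(p+q)_k$ is $S_k^p\sqcup S_k^q$. Choose natural numbers $m_k^p,m_k^q$ with $|S_k^p|=m_k^p$, $|S_k^q|=m_k^q$, so that $|S_k^p\sqcup S_k^q|=m_k^p+m_k^q$ and
\[\lim_{k\to\omega}\frac{m_k^p}{n_kr_k}=Tr(p),\quad \lim_{k\to\omega}\frac{m_k^q}{n_kr_k}=Tr(q),\quad \lim_{k\to\omega}\frac{m_k^p+m_k^q}{n_kr_k}=Tr(p+q).\]

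The key observation is then purely combinatorial: since $p_k(\Theta(g)\otimes Id_{r_k})$ is a permutation on $S_k^p$ and $q_k(\Theta(g)\otimes Id_{r_k})$ is a permutation on $S_k^q$, their sum $(p_k+q_k)(\Theta(g)\otimes Id_{r_k})$ is the disjoint union permutation on $S_k^p\sqcup S_k^q$. By construction of convex combinations in $Sof(G,P^\omega)$ (Section 2.2 of \cite{Pa2}), the class of a disjoint-union sofic approximation on a set partitioned into blocks of relative sizes tending to $\alpha$ and $1-\alpha$ along $\omega$ is exactly $\alpha[\Theta_p]+(1-\alpha)[\Theta_q]$. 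Applied with $\alpha=\lim_{k\to\omega}m_k^p/(m_k^p+m_k^q)=Tr(p)/Tr(p+q)$, this yields
\[[\Theta_{p+q}]=\frac{Tr(p)}{Tr(p+q)}[\Theta_p]+\frac{Tr(q)}{Tr(p+q)}[\Theta_q],\]
which is the claim.

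The only subtle point, and the one I would spell out carefully, is the reduction to a common sequence of dimensions together with the choice of $m_k^p,m_k^q$. Observation 2.4 (amplification invariance) guarantees that this choice does not affect the classes $[\Theta_p]$ and $[\Theta_q]$, so one may freely take the canonical choice $m_k^p=|S_k^p|$, $m_k^q=|S_k^q|$ used above. Once that bookkeeping is done, the statement is essentially the definition of convex combination read off from the disjoint decomposition of supports.
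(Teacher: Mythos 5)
Your proof is correct and takes the same route as the paper's: unfold $\Theta_{p+q}$ as the disjoint union (direct sum) of $\Theta_p$ and $\Theta_q$ and recognise this as precisely the construction of the convex combination from Section 2.2 of \cite{Pa2}, with the weight $Tr(p)/Tr(p+q)$ coming from the relative block sizes. You have simply made explicit the bookkeeping that the paper compresses into ``a close inspection of the dimensions of the permutations involved.'' One small slip: the amplification-invariance observation you invoke is the unlabelled Observation following the definition of $\Theta_p$ (the one asserting $[\Theta_p]=[\Theta_{p\otimes Id_{r_k}}]$), not an ``Observation 2.4''; but the fact you use is the right one.
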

\begin{proof} 
Let $p,q\in\Pi_{k\to\omega}D_{n_kr_k}$. Then $\Theta_{p+q}=(\Theta\otimes Id_{r_k})_p\oplus(\Theta\otimes Id_{r_k})_q$. As $\Theta_{p+q}$ is a direct sum, by the definition of the convex structure, its class is a convex combination of its summands. Thus $[\Theta_{p+q}]$ is a convex combination of $[\Theta_p]$ and $[\Theta_q]$. The coefficients of this convex combination are given by how much space $[\Theta_p]$ and $[\Theta_q]$ occupy in the direct sum. A close inspection of the dimensions of the permutations involved, yields the stated result.
\end{proof}

The following result is important, as it shows that $F_{[\Theta]}$ is convex, a first requirement of being a face.

\begin{lemma}\cite[Analogue of Proposition 3.3]{Ak}
Let $\Theta: G\to\Pi_{k\to\omega}P_{n_k}$ be a sofic representation, $p,q$ be two cutting projections and $\lambda\in[0,1]$. Then there exists $s$ a cutting projection such that:
\[\lambda[\Theta_p]+(1-\lambda)[\Theta_q]=[\Theta_s].\]
\end{lemma}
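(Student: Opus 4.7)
The plan is to realize both $[\Theta_p]$ and $[\Theta_q]$ as cuts, along \emph{disjoint} cutting projections, of one common amplification of $\Theta$, whose trace ratio is exactly $\lambda:(1-\lambda)$. One can then invoke Lemma~\ref{lemma_2proj} directly.

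First I would bring $p$ and $q$ to a common sequence of dimensions by applying the invariance $[\Theta_x]=[\Theta_{x\otimes Id}]$ so that, after replacing them by amplifications, both live in $\Pi_{k\to\omega}D_{n_k r_k}$ and each commutes with $\Theta\otimes Id_{r_k}$. The cases $\lambda\in\{0,1\}$ are trivial by taking $s=q$ or $s=p$, and we may assume $Tr(p),Tr(q)>0$ (otherwise the corresponding cut is not a well-defined sofic representation).

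Next I would pick sequences of positive integers $(a_k)_k,(b_k)_k$ realizing the target ratio in the ultralimit:
\[\lim_{k\to\omega}\frac{a_k\,Tr(p_k)}{a_k\,Tr(p_k)+b_k\,Tr(q_k)}=\lambda,\]
which is a routine rational-approximation argument since $Tr(p),Tr(q)>0$. I would then fix orthogonal projections $e_k,f_k\in D_{a_k+b_k}$ of ranks $a_k$ and $b_k$ (supported on the first $a_k$ and next $b_k$ coordinates), and set
\[P:=p\otimes e,\qquad Q:=q\otimes f,\qquad s:=P+Q\in\Pi_{k\to\omega}D_{n_k r_k(a_k+b_k)}.\]
Here $P$ and $Q$ are projections, they are orthogonal since $ef=0$, and each commutes with $\Theta\otimes Id_{r_k(a_k+b_k)}$, so $P,Q,s\in\cc(\Theta)$. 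After a coordinate permutation $P$ becomes $(p\otimes Id_{a_k})\oplus 0$, so by the amplification invariance recalled after Definition~2.3 we have $[\Theta_P]=[\Theta_p]$, and analogously $[\Theta_Q]=[\Theta_q]$.

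Finally I would apply Lemma~\ref{lemma_2proj} to the disjoint pair $(P,Q)$ to obtain
\[[\Theta_s]=\frac{Tr(P)}{Tr(P)+Tr(Q)}[\Theta_p]+\frac{Tr(Q)}{Tr(P)+Tr(Q)}[\Theta_q],\]
and by the choice of $a_k,b_k$ the first coefficient equals $\lambda$, giving the desired equality. The only genuinely non-routine point is the integer approximation used to select $a_k,b_k$, which however is a standard density argument thanks to the flexibility provided by the ultralimit; everything else is careful bookkeeping with the amplification indices.
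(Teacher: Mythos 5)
Your proposal is correct and follows essentially the same approach as the paper: amplify $p$ and $q$ to a common dimension, tensor each with one of two disjoint auxiliary projections whose traces are tuned so that the resulting convex coefficient is $\lambda$, set $s$ equal to the sum, and invoke Lemma~\ref{lemma_2proj}. The only cosmetic difference is that the paper picks a single projection $t\in\Pi_{k\to\omega}D_{n_k}$ with $Tr(t)=\lambda Tr(q)/(\lambda Tr(q)+(1-\lambda)Tr(p))$ and uses the pair $(t,1-t)$ directly, which avoids your intermediate integer-approximation step.
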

\begin{proof}
We can assume that $p$ and $q$ are in the same sequence of dimensions, i.e. $p,q\in\Pi_{k\to\omega}D_{n_kr_k}$. Let $t\in\Pi_{k\to\omega}D_{n_k}$ be any projection such that:
\[Tr(t)= \frac{\lambda Tr(q)}{\lambda Tr(q)+(1-\lambda)Tr(p)}.\] 
This value is chosen such that $\lambda=\frac{Tr(p)Tr(t)}{Tr(p)Tr(t)+Tr(q)(1-Tr(t))}$. Construct $s\in\Pi_{k\to\omega}D_{n_k^2r_k}$ by:
\[s=p\otimes t+ q\otimes(1-t).\] 
It is easy to see that $s$ is a cutting projection. Moreover: 
\[\Theta_s=\Theta_{p\otimes t}\oplus\Theta_{q\otimes (1-t)}=(\Theta\otimes Id_{r_k})_p\otimes(Id_{n_k})_t\oplus (\Theta\otimes Id_{r_k})_q\otimes(Id_{n_k})_{1-t}.\]
As in the previous lemma, $[\Theta_s]$ is a convex combination between $[\Theta_p]$ and $[\Theta_q]$. The value of $Tr(t)$ is chosen such that this is the required combination, i.e. $[\Theta_s]=\lambda[\Theta_p]+(1-\lambda)[\Theta_q]$.
\end{proof}

\begin{p}\cite[Analogue of Proposition 3.3]{Ak}
The set $F_{[\Theta]}$ is the minimal face containing $[\Theta]$.
\end{p}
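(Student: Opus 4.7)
The plan is to verify three statements: (i) $[\Theta]\in F_{[\Theta]}$; (ii) $F_{[\Theta]}$ is a face of $Sof(G,P^\omega)$, i.e.\ convex and stable under extraction of convex summands; (iii) $F_{[\Theta]}\subseteq F$ for every face $F$ containing $[\Theta]$. Statement (i) is immediate by choosing the cutting projection to be the identity.

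For (ii), convexity has just been established by the preceding lemma, so only the extraction axiom remains. Suppose $[\Theta_p]=\lambda[\Psi_1]+(1-\lambda)[\Psi_2]$ with $\lambda\in(0,1)$. Apply Observation \ref{obs_cut} to the sofic representation $\Theta_p$: there exists a cutting projection $q$ of $\Theta_p$ with $[(\Theta_p)_q]=[\Psi_1]$. Following the perspective discussed after Observation \ref{obs_doublecut}, $q$ may be viewed as a projection in $\Pi_{k\to\omega}D_{n_kr_k}$ whose support lies inside the support of $p$; since $p$ already commutes with $\Theta\otimes Id_{r_k}$, the action on that support is exactly $\Theta_p$, so $q$ is itself a cutting projection of $\Theta$ with $q\leqslant p$. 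Observation \ref{obs_doublecut} then yields
\[[\Psi_1]=[(\Theta_p)_q]=[\Theta_{pq}]=[\Theta_q]\in F_{[\Theta]},\]
and swapping the roles of $\Psi_1$ and $\Psi_2$ shows $[\Psi_2]\in F_{[\Theta]}$ as well.

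For (iii), let $F$ be any face of $Sof(G,P^\omega)$ containing $[\Theta]$ and let $p\in\cc(\Theta)$. If $Tr(p)=1$ then $[\Theta_p]=[\Theta]\in F$. Otherwise, working in a common sequence of dimensions $\Pi_{k\to\omega}D_{n_kr_k}$, the projection $q:=Id-p$ is also a cutting projection of $\Theta$, disjoint from $p$, with $p+q=Id$. Lemma \ref{lemma_2proj} therefore gives
\[[\Theta]=[\Theta_{p+q}]=Tr(p)[\Theta_p]+(1-Tr(p))[\Theta_q],\]
and since $F$ is a face containing $[\Theta]$, both $[\Theta_p]$ and $[\Theta_q]$ lie in $F$. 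Hence $F_{[\Theta]}\subseteq F$.

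The only delicate point is the bookkeeping in step (ii): one has to move between two natural viewpoints of the projection $q$, namely as an element of the commutant of $\Theta_p$ acting on the support of $p$ and as an element of $\Pi_{k\to\omega}D_{n_kr_k}$ commuting with $\Theta\otimes Id_{r_k}$. Once this identification is made precise (which is exactly what the discussion following Observation \ref{obs_doublecut} justifies), the rest of the proof reduces to a direct application of Observations \ref{obs_cut} and \ref{obs_doublecut} together with Lemma \ref{lemma_2proj}.
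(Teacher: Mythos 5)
Your proposal is correct and follows essentially the same route as the paper: convexity from the preceding lemma, the extraction axiom via Observations \ref{obs_cut} and \ref{obs_doublecut}, and minimality via Lemma \ref{lemma_2proj} applied to $p$ and $1-p$. The only cosmetic difference is your extra simplification $[\Theta_{pq}]=[\Theta_q]$ (valid since $q\leqslant p$ after amplification), which the paper leaves as $[\Theta_{pq}]$.
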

\begin{proof}
We first show that $F_{[\Theta]}$ is indeed a face. By the previous lemma, $F_{[\Theta]}$ is convex. Let now $\lambda[\Psi_1]+(1-\lambda)[\Psi_2]$ be an element of $F_{[\Theta]}$. So there is a cutting projection $p$ such that $[\Theta_p]= \lambda[\Psi_1]+(1-\lambda)[\Psi_2]$. By Observation \ref{obs_cut}, there exists $q$ a cutting projection such that $[(\Theta_p)_q]=[\Psi_1]$ and by Observation \ref{obs_doublecut}, $[(\Theta_p)_q]=[\Theta_{pq}]$ (amplifying $p$ and $q$ to the same sequence of dimensions if needed). Thus $[\Psi_1]=[\Theta_{pq}]\in F_{[\Theta]}$.

Let now $F$ be a face containing $[\Theta]$, and let $p$ be a cutting projection. As a particular case of Lemma \ref{lemma_2proj}, we get $[\Theta]=Tr(p)[\Theta_p]+ (1-Tr(p))[\Theta_{1-p}]$. It follows that $F$ contains $[\Theta_p]$, so it contains $F_{[\Theta]}$.
\end{proof}

\begin{te}
$F_{[\theta]}$ is closed.
\end{te}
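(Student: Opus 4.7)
The plan is to realise $F_{[\Theta]}$ as the continuous image of a weak-$*$ compact set, so that closedness is automatic. The key ingredients are the identification of cutting projections with positive contractions in the commutant and the explicit trace formula for a cut.

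First, I would exploit the identification $\cc(\Theta) \simeq X := (\Pi_{k\to\omega}D_{n_k})^1_+ \cap \Theta'$ from the preceding proposition. Since $\Pi_{k\to\omega}D_{n_k}$ is an abelian von Neumann algebra, hence the dual of its predual, the positive part of its unit ball is weak-$*$ compact by Banach--Alaoglu, and $\Theta'$ is weak-$*$ closed in $\Pi_{k\to\omega}M_{n_k}$, so the intersection $X$ is weak-$*$ compact.

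Next, I would verify that the cut map $\Phi : X \to Sof(G, P^\omega)$, $\Phi(f) = [\Theta_f]$, is continuous from the weak-$*$ topology on $X$ to the natural topology on $Sof(G, P^\omega)$ given by pointwise convergence of the traces $g \mapsto Tr(\Psi(g))$. The essential computation is the trace formula
\[
Tr(\Theta_f(g)) = \frac{\tau(f\,\Theta(g))}{\tau(f)},
\]
which at finite level is the elementary identity $Tr_m(a|_{\mathrm{range}(p)}) = Tr_n(pa)/Tr_n(p)$ for a projection $p \in D_n$ and $a \in M_n$ commuting with $p$, and which extends to general $f \in X$ via the projection-to-contraction correspondence of the previous proposition. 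Since $f \mapsto \tau(f\,\Theta(g))$ and $f \mapsto \tau(f)$ are both weak-$*$ continuous, so is their ratio wherever $\tau(f) > 0$. Therefore $F_{[\Theta]} = \Phi(X)$ is the continuous image of a compact set, hence compact and in particular closed.

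The main obstacle I expect is the degenerate stratum $\tau(f) = 0$, on which the trace formula degenerates to a $0/0$ indeterminate and the original definition of $\Theta_f$ breaks down, so that the compactness--continuity argument as just stated genuinely lives only on the open subset $\{\tau(f) > 0\} \subset X$. To close this gap one would have to show either that a convergent sequence $[\Theta_{p_n}] \to [\Psi]$ in $Sof(G, P^\omega)$ forces $Tr(p_n)$ to stay bounded away from $0$ (for instance, by inspecting a group element $g$ with $Tr(\Psi(g)) < 1$), or, failing that, to employ a diagonal or countable-saturation argument in the ultrapower that rescales a vanishing-trace sequence of cuts into a single cutting projection of positive trace realising the limit class.
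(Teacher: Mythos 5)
Your argument would prove not merely that $F_{[\Theta]}$ is closed but that it is compact, and this conclusion is actually false in general: Section~\ref{decreasing faces} of the paper exhibits a nested decreasing chain of nonempty closed faces $F_j=F_{[\Psi_{q_j}]}$, each contained in $F_{[\Psi]}$, with empty intersection. If $F_{[\Psi]}$ were compact, the finite intersection property would force $\bigcap_j F_j\neq\emptyset$. So the compactness strategy cannot work as stated, and the obstruction is not the degenerate stratum $\tau(f)=0$ you flagged at the end.

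The error is in the identification of the topology. For any sofic representation $\Psi$ of $G$ one has $Tr(\Psi(g))=0$ for all $g\neq e$ and $Tr(\Psi(e))=1$; this is precisely the defining condition of soficity. Hence the function $g\mapsto Tr(\Psi(g))$ is the same for every point of $Sof(G,P^\omega)$, and ``pointwise convergence of traces'' is the indiscrete topology. The actual topology on $Sof(G,P^\omega)$ (inherited from Brown's construction for $Hom(N,R^\omega)$) is induced by a conjugation metric of the form
\[
d([\Theta_1],[\Theta_2])\;=\;\inf_{p}\ \sum_{n}2^{-n}\,d_H\bigl(p\,\Theta_1(g_n)\,p^*,\ \Theta_2(g_n)\bigr),
\]
the infimum running over permutations $p$ in a common amplification and $(g_n)$ enumerating $G$. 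With this topology the map $f\mapsto[\Theta_f]$ is not continuous from the weak-$*$ topology on $(\Pi_{k\to\omega}D_{n_k})^1_+\cap\Theta'$: weak-$*$ convergence $p_n\to f$ gives control only over averages $\tau(p_n x)$ and says nothing about Hamming proximity of the cut representations after conjugation. (This is why, for instance, $Sof(G,P^\omega)$ itself fails to be compact even though it sits inside a bounded set, per Ozawa's appendix to \cite{Br}.)

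The paper's proof is of a completely different nature: given a sequence $[\Theta_{p_n}]\to[\Psi]$, one runs a diagonal argument along the ultrafilter, choosing for each $n$ a level-$k$ approximation of the conjugating permutation witnessing $d([\Theta_{p_n}],[\Psi])<1/n$, and assembles from these a single cutting projection $p\in\cc(\Theta)$ with $[\Theta_p]=[\Psi]$. Metrizability of $Sof(G,P^\omega)$ is what allows the reduction to sequences; no compactness is invoked or obtained.
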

\begin{proof}
This follows by a diagonal argument constructing a sofic representation from a sequence of sofic representations. It is also important that the space $Sof(G,P^\omega)$ is metric.
\end{proof}

\section{Minimal faces}\label{minimal faces}

The goal of this section is to prove that minimal faces are points, and thus extreme points. For a compact convex subset this is an easy consequence of Hahn-Banach theorem, and part of the proof of Krein-Milman theorem. Here we don't have compactness, so we deduce this result by other means.

\begin{p}\label{maxcut}
If $\Psi\in F_{[\Theta]}$ there exists a maximal cutting projection $s$ of $[\Theta]$ such that $[\Psi]= [\Theta_s]$.
\end{p}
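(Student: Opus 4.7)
The plan is to apply Zorn's lemma to the poset
\[ S \;=\; \{\, q \in \cc(\Theta) \;:\; [\Theta_q] = [\Psi] \,\}, \]
ordered by $\leqslant$. The hypothesis $\Psi\in F_{[\Theta]}$ ensures $S\neq\emptyset$, so the task reduces to producing an upper bound in $S$ for every chain in $S$.

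Given a chain $\{q_i\}_{i\in I}\subseteq S$, I would first extract a countable cofinal sequence: since $Tr$ takes values in $[0,1]$, pick $q_{i_n}$ with $Tr(q_{i_n})\nearrow \sup_{i\in I} Tr(q_i)$; by chain totality we may assume $q_{i_1}\leqslant q_{i_2}\leqslant\cdots$, and after a simultaneous amplification all of the $q_{i_n}$ can be viewed as literal projections in a common algebra $\Pi_{k\to\omega}D_{n_kr_k}$ commuting with $\Theta\otimes Id_{r_k}$. Define $p:=\sup_n q_{i_n}$, the supremum computed inside $\Pi_{k\to\omega}D_{n_kr_k}$. Then $p$ is a projection commuting with $\Theta\otimes Id_{r_k}$, hence a cutting projection, and $Tr(p)=\sup_n Tr(q_{i_n})$ by normality of the trace. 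To see $p$ dominates the entire chain, fix any $q_j$ in it: either $q_j\leqslant q_{i_n}$ for some $n$, so $q_j\leqslant p$; or $q_{i_n}\leqslant q_j$ for all $n$, in which case (passing to a common amplification) $p=\sup_n q_{i_n}\leqslant q_j$, and then $Tr(p)=Tr(q_j)$ combined with $p\leqslant q_j$ in a trace-finite von Neumann algebra forces $p=q_j$.

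The crux is verifying that $p\in S$. Since $q_{i_n}\leqslant p$ in a common sequence of dimensions, Lemma \ref{lemma_2proj} applied to the disjoint cutting projections $q_{i_n}$ and $p-q_{i_n}$ gives
\[
[\Theta_p] \;=\; \frac{Tr(q_{i_n})}{Tr(p)}\,[\Theta_{q_{i_n}}] \;+\; \frac{Tr(p-q_{i_n})}{Tr(p)}\,[\Theta_{p-q_{i_n}}].
\]
As $n\to\infty$ the first coefficient tends to $1$, while $[\Theta_{q_{i_n}}]=[\Psi]$ for every $n$. Because the standard metric on $Sof(G,P^\omega)$ is bounded and satisfies the compatibility estimate $d\bigl(\lambda[\Xi_1]+(1-\lambda)[\Xi_2],\,[\Xi_1]\bigr)\leqslant 1-\lambda$, we conclude $[\Theta_p]=\lim_n [\Theta_{q_{i_n}}]=[\Psi]$, so $p\in S$. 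Zorn's lemma then delivers a maximal element $s\in S$, as required. The main obstacle is precisely this last continuity claim: one must justify that pushing one weight of a convex combination to $1$ drags the combination to the dominant summand in the sofic metric, uniformly in the other (potentially wildly varying) summand. This is standard for the Hamming-type metric used on $Sof(G,P^\omega)$, but is the only non-formal input beyond Zorn's lemma and Lemma \ref{lemma_2proj}.
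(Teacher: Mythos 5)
Your argument follows the paper's proof essentially step for step: reduce an arbitrary chain in $S$ to a countable increasing cofinal subsequence via the trace, produce an upper bound $p$ with $Tr(p)=\sup_n Tr(q_{i_n})$, invoke Lemma \ref{lemma_2proj} together with the metric compatibility of the convex structure to conclude $[\Theta_p]=[\Psi]$, and finish with Zorn. The one place where you are noticeably breezier than the paper is the sentence asserting that ``after a simultaneous amplification all of the $q_{i_n}$ can be viewed as literal projections in a common algebra $\Pi_{k\to\omega}D_{n_kr_k}$'': since each $q_{i_n}$ lives in its own amplification $\Pi_{k\to\omega}D_{n_k r_k^{(n)}}$ and there are countably many of them, there is no single $(r_k)$ that majorizes all of them coordinatewise; one must either run a diagonal argument (choosing at the $k$-th coordinate a common multiple of $r_k^{(1)},\dots,r_k^{(k)}$, which suffices in the ultraproduct) or pass to a direct limit of amplifications as the paper suggests. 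The paper explicitly flags this as ``a rather involved diagonal argument,'' so it deserves at least a sentence rather than being swept into the phrase ``simultaneous amplification.'' Apart from that, your dichotomy for showing $p$ bounds the full chain (rather than just the cofinal subsequence) is a slight repackaging of what the paper does by first disposing of the case where the supremum of traces is attained, and your identification of the continuity estimate as the only non-formal ingredient matches the paper's appeal to the axioms of the convex-like structure.
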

\begin{proof}
First of all, when we say \emph{maximal} projection, we consider a projection to be equivalent to any of its amplifications. So if $p,q$ are two projections then $p\leqslant q$ if they have amplifications $p_1,q_1$ to the same sequences of dimensions such that $p_1q_1=p_1$.

Let 
$$A=\{p : p\mbox{ cutting projection, } [\Theta_p]=[\Psi]\}$$
endowed with the order relation specified above. Let  
 $P$ be a totally ordered subset of $A$. We want to prove that $P$ has an upper bound in $A$. The  important thing to notice here, is that for $p,q\in P$, $Tr(p)\leqslant Tr(q)$ implies $p\leqslant q$. This comes from the fact that $P$ is totally ordered. If $\sup\{ Tr(p):  p\in P\}$ is attained in $P$, then that element would be maximal in $P$ and we are done. Otherwise, choose $q_i\in P$, $i\in\mathbb{N}$  such that $q_i<q_{i+1}$ for $i\in\mathbb{N}$ and  
\begin{equation*}\label{eq1}
\sup\{Tr(q_i) :  i\in\mathbb{N} \}=\sup\{ Tr(p):  p\in P\}.
\end{equation*}
The goal here is to reduce $P$ to a countable subset. Indeed, for each $p\in P$ there is $i\in\nz$ such that $p<q_i$. All we have to do is to construct an upper bound in $A$ of the sequence $\{q_i\}_{i\in\nz}$.

Let $q_i=\Pi_{k\to\omega}q_i^k$. We know that $(q_i\otimes Id)(q_{i+1}\otimes Id)=q_i\otimes Id$. It is a rather involved diagonal argument here to show that there exists $p$ a cutting projection $q_i<p$ for any $i$ and $Tr(p)=\sup\{Tr(q_i) :  i\in\mathbb{N} \}$. Alternatively, we can simply take the supremum of the sequence $\{q_i\}_{i\in\nz}$, when these projections are embedded in a direct limit of amplifications, like the map $\Psi$ from Notation 2.2 in \cite{Pa3}.

We only need to show that $[\Theta_p]=[\Psi]$. As $q_i<p$, by Lemma \ref{lemma_2proj} and Observation \ref{obs_doublecut} we have:
\[[\Theta_p]=\frac{Tr(q_i)}{Tr(p)}[\Theta_{q_i}]+\frac{Tr(p)-Tr(q_i)}{Tr(p)}[\Theta_{p- q_i}].\]
As $\lim_{i\rightarrow\infty}Tr(q_i)=Tr(p)$, the axioms of the convex structure imply that 
\[\lim_{i\rightarrow\infty}d([\Theta_{p}], [\Theta_{q_i}])=0.\]
But $[\Theta_{q_i}]=[\Psi]$, as $q_i\in A$. We conclude that $[\Theta_{p}]=[\Psi]$. The proposition follows now by Zorn's lemma. 
\end{proof}

\begin{te}
The minimal faces are exactly the extremal points. 
\end{te}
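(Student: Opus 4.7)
My plan is to prove the two implications separately. One direction is immediate: if $[\Theta]$ is an extremal point, then $\{[\Theta]\}$ is a face, so it coincides with the minimal face $F_{[\Theta]}$, and a singleton face is trivially a minimal face of $Sof(G,P^\omega)$.

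For the substantive direction I would take a minimal face $F$, pick any $[\Theta]\in F$, and observe that by the previous proposition $F_{[\Theta]}\subseteq F$, so minimality forces $F=F_{[\Theta]}$. The target becomes to show $F_{[\Theta]}=\{[\Theta]\}$; extremality of $[\Theta]$ then drops out for free, since any decomposition $[\Theta]=\lambda[\Psi_1]+(1-\lambda)[\Psi_2]$ keeps both pieces inside the face $F_{[\Theta]}=\{[\Theta]\}$.

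To rule out the existence of a second element $[\Psi]\neq[\Theta]$ in $F_{[\Theta]}$ I would invoke Proposition~\ref{maxcut} to choose a maximal cutting projection $s$ of $\Theta$ with $[\Theta_s]=[\Psi]$, and then build a strictly larger cutting projection still producing $[\Psi]$. Since projections of trace one in a tracial ultraproduct coincide with the identity, the hypothesis $[\Psi]\neq[\Theta]$ forces $Tr(s)<1$, so $1-s$ is a nontrivial cutting projection. Setting $[\Phi]:=[\Theta_{1-s}]\in F_{[\Theta]}$, minimality of $F$ applied to $F_{[\Phi]}\subseteq F$ gives $F_{[\Phi]}=F_{[\Theta]}$, hence $[\Psi]\in F_{[\Phi]}$ and we find a cutting projection $v$ of $\Phi$ of positive trace with $[\Phi_v]=[\Psi]$. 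Viewed inside $\Theta$, this $v$ sits under $1-s$ and is therefore orthogonal to $s$, so $s+v$ is a cutting projection of $\Theta$ strictly dominating $s$, and Lemma~\ref{lemma_2proj} yields
\[
[\Theta_{s+v}]=\frac{Tr(s)}{Tr(s+v)}[\Theta_s]+\frac{Tr(v)}{Tr(s+v)}[\Theta_v]=[\Psi],
\]
as both summands equal $[\Psi]$. This contradicts the maximality of $s$.

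The only point needing real care is the identification allowing me to regard a cutting projection of $\Theta_{1-s}$ as a cutting projection of $\Theta$ orthogonal to $s$, together with the legitimacy of forming $s+v$ in a common sequence of dimensions. This is routine amplification bookkeeping handled throughout the paper, most transparently through the identification of cutting projections with elements of $(\Pi_{k\to\omega}D_{n_k})_+^1\cap\Theta^\prime$, so I do not anticipate any essential obstruction there.
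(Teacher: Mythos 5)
Your proof is correct and follows essentially the same strategy as the paper's: use Proposition~\ref{maxcut} to extract a maximal cutting projection $s$ with $[\Theta_s]=[\Psi]$, observe that minimality forces $F_{[\Theta_{1-s}]}=F$, pull $[\Psi]$ out of $\Theta_{1-s}$ by another cut, and then strictly enlarge $s$ via Lemma~\ref{lemma_2proj} to obtain a contradiction. The only cosmetic difference is that you state the easy converse (extreme point $\Rightarrow$ singleton minimal face) explicitly, which the paper leaves implicit.
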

\begin{proof}Let $F$ be a minimal face and let $\Theta\in F$. As $F$ is minimal, $F=F_{[\Theta]}$. Consider $[\Psi]\in F_{[\Theta]}$. By the previous Proposition, there exists a maximal projection  $q$ such that $[\Theta_q]=[\Psi]$. We will show that $q=1$.
Assume that $q<1$. We know that $[\Theta_{(1-q)}]\in F$. Thus $F_{[\Theta_{(1-q)}]}\subset F$ and since $F$ is minimal, we have $F=F_{[\Theta_{(1-q)}]}$. As $[\Psi]\in F_{[\Theta_{(1-q)}]}$ there exists a non zero projection $p$ such that $[\Psi]=[(\Theta_{(1-q)})_p]$. Then, by Observation \ref{obs_doublecut} $[\Psi]=[\Theta_{(1-q)p}]$ and by Lemma \ref{lemma_2proj} $[\Psi]=[\Theta_{q+(1-q)p}]$. This contradicts the maximality of $q$. So any element in $F$ is equal to $[\Theta]$.
\end{proof}

\begin{op}
Does this result follow directly from Nate Brown's axioms, for any convex-like sturcture?
\end{op}

\section{Decreasing chain of faces}\label{decreasing faces}

Recall that $[\Theta]\in Sof(G,P^\omega)$ is an extreme point if and only if $[\Theta]=[\Theta_p]$ for any cutting projection $p$ (Lemma 2.12 of \cite{Pa2}). We need a countable sequence of such extreme sofic representations in order to construct a decreasing chain of faces.

\begin{p}
Let $(\Theta_i)_{i\in\nz^*}$ be a sequence of different extreme sofic representations of a countable group $G$. Then there exist a sofic representation $\Psi$ such that:
\begin{enumerate}
\item For each $i\in\nz^*$ there exists $p_i\in\cc(\Psi)$ such that $[\Psi_{p_i}]=[\Theta_i]$;
\item For each $p\in\cc(\Psi)$ there exists $q\in\cc(\Psi)$, $q\leqslant p$ and $i\in\nz$ such that $[\Psi_q]=[\Theta_i]$.
\end{enumerate}
\end{p}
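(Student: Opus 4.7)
The plan is to apply Proposition \ref{infinite} directly to the sequence $(\Theta_i)_{i\in\nz^*}$. Choose any weights $\lambda_i\in(0,1]$ with $\sum_i\lambda_i=1$ (for instance $\lambda_i=2^{-i}$) and let $\Psi:G\to\Pi_{k\to\omega}P_{m_k}$ together with cutting projections $p_i\in\cc(\Psi)$ be the output of that proposition, so that $Tr(p_i)=\lambda_i$, $\sum_ip_i=Id$, and $[\Psi_{p_i}]=[\Theta_i]$. Condition (1) of the proposition is then an immediate restatement of this output.

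For condition (2), fix an arbitrary cutting projection $p\in\cc(\Psi)$ of positive trace. After amplifying $p$ and the $p_i$ to a common sequence of dimensions, all of these projections live in the commutative ultraproduct algebra of diagonals and therefore pairwise commute. In particular each $pp_i$ is again a projection, and since it commutes with the image of $\Psi$, it belongs to $\cc(\Psi)$. From $\sum_ip_i=Id$ and normality of the trace on $\Pi_{k\to\omega}D$ we get $\sum_iTr(pp_i)=Tr(p)>0$, so there exists at least one index $i$ with $Tr(pp_i)>0$. Set $q:=pp_i$; by construction $q\leqslant p$ and $q\leqslant p_i$.

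It remains to identify $[\Psi_q]$. By Observation \ref{obs_doublecut} applied to the pair $(p_i,q)$ in the common sequence of dimensions, $[\Psi_q]=[\Psi_{p_iq}]=[(\Psi_{p_i})_q]$, so $[\Psi_q]$ is realised as a cut of $\Psi_{p_i}$. Since $[\Psi_{p_i}]=[\Theta_i]$ is an extreme point, the characterisation of extreme points recalled just before the statement (Lemma 2.12 of \cite{Pa2}) forces every cut to remain in the class $[\Theta_i]$; hence $[\Psi_q]=[\Theta_i]$, as required. An equivalent route, bypassing Observation \ref{obs_doublecut}, is to apply Lemma \ref{lemma_2proj} to the disjoint cutting projections $pp_i$ and $p_i-pp_i$ summing to $p_i$, and to use extremality of $[\Theta_i]=[\Psi_{p_i}]$ to force both summands in the resulting convex combination to equal $[\Theta_i]$.

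The conceptual argument is very short; the only real care is the bookkeeping with amplifications needed to ensure that $pp_i$ is honestly a cutting projection and that Observation \ref{obs_doublecut} is applicable, together with a brief check of the degenerate case $Tr(pp_i)=Tr(p_i)$ in which the Lemma \ref{lemma_2proj} splitting is trivial and the conclusion is immediate.
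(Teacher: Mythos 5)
Your proposal is correct and follows essentially the same route as the paper: construct $\Psi$ via Proposition \ref{infinite}, observe that (1) is immediate, then for (2) pick $i$ with $Tr(p(p_i\otimes Id))>0$, set $q=p(p_i\otimes Id)$, and invoke extremality of $[\Theta_i]=[\Psi_{p_i}]$. You spell out the last step (via Observation \ref{obs_doublecut} and Lemma 2.12 of \cite{Pa2}) in slightly more detail than the paper, which simply states "it follows that $[\Psi_q]=[\Theta_i]$," but the argument is the same.
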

\begin{proof}
Let $(\lambda_i)_{i\in\nz^*}\subset[0,1]$ be a sequence such that $\sum_i\lambda_i=1$. Use Proposition \ref{infinite} to construct a sofic representation $\Psi:G\to\Pi_{k\to\omega}P_{n_k}$ with the given properties. Then point $(1)$ is automatic and does not require $\Theta_i$ to be extreme points.

Now let $p\in\cc(\psi)$, $p\in\Pi_{k\to\omega}D_{n_kr_k}$. Then there exists $i$ such that $Tr\big(p(p_i\otimes Id_{r_k})\big)\neq 0$. Let $q=p(p_i\otimes Id_{r_k})$. Then $q\leqslant p$ and $q\leqslant p_i$. As $[\Psi_{p_i}]=[\Theta_i]$ is an extreme point, it follows that $[\Psi_q]=[\Theta_i]$.
\end{proof}

\begin{te}
Let $(\Theta_i)_{i\in\nz^*}$ be a sequence of different extreme sofic representations of a countable group $G$. Then there exists a decreasing chain of faces in $Sof(G,P^\omega)$ with trivial intersection.
\end{te}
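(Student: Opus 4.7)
The plan is to slice $\Psi$ from the preceding proposition by the tails $\tilde Q_n := \sum_{i>n} p_i$, and then use property (2) of that proposition to rule out any common point in the resulting chain.

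First, I would apply the preceding proposition with any strictly positive summable weights $\lambda_i$ with $\sum_i \lambda_i = 1$, obtaining $\Psi:G\to\Pi_{k\to\omega}P_{n_k}$ together with pairwise disjoint cutting projections $p_i$ of traces $\lambda_i$ summing to the identity, with $[\Psi_{p_i}]=[\Theta_i]$, and such that every $p\in\cc(\Psi)$ contains a sub-cut giving some $[\Theta_i]$ (property (2)). Set $\tilde Q_n := \sum_{i>n} p_i = Id - \sum_{i\leqslant n} p_i$, a decreasing sequence of cutting projections of trace $\sum_{i>n}\lambda_i\to 0$, and let $F_n := F_{[\Psi_{\tilde Q_n}]}$. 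Each $F_n$ is a closed face by the closedness theorem of Section ``The face of a sofic representation''. The inclusion $F_{n+1}\subset F_n$ is immediate: an element of $F_{n+1}$ is, using Observation \ref{obs_doublecut}, of the form $[\Psi_w]$ for some cutting projection $w\leqslant \tilde Q_{n+1}\leqslant \tilde Q_n$, hence lies in $F_n$.

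The core of the proof is to show $\bigcap_n F_n=\emptyset$. Suppose for contradiction $[\Phi]\in\bigcap_n F_n$, and write $[\Phi]=[\Psi_{v_1}]$ with $v_1\leqslant \tilde Q_1$. Property (2) applied to $v_1$ yields $u\leqslant v_1$ and an index $i_0$ with $[\Psi_u]=[\Theta_{i_0}]$; the construction in the proof of the preceding proposition picks $i_0$ so that $Tr(v_1(p_{i_0}\otimes Id))>0$, and since $v_1\leqslant\tilde Q_1$ is orthogonal to $p_1$, we must have $i_0\geqslant 2$. Now $[\Theta_{i_0}]\in F_{[\Psi_{v_1}]}=F_{[\Phi]}$, and since $F_{[\Phi]}$ is the minimal face containing $[\Phi]$, we have $F_{[\Phi]}\subset F_m$ for every $m$. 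Taking $m=i_0$ gives $[\Theta_{i_0}]=[\Psi_w]$ for some $w\leqslant \tilde Q_{i_0}$. Applying property (2) to $w$ and combining with the extremeness of $[\Theta_{i_0}]$ (which forces the index produced to equal $i_0$), we obtain $Tr(w(p_{i_0}\otimes Id))>0$, contradicting $w\leqslant \tilde Q_{i_0}\perp p_{i_0}$.

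I expect the main obstacle to be the final step, where one must carefully route the extremeness of $[\Theta_{i_0}]$ through property (2): the sub-cut produced by (2) is a cut of the extreme point $[\Psi_w]=[\Theta_{i_0}]$, hence equal to $[\Theta_{i_0}]$, which pins down the resulting index as $i_0$ and so transfers the positivity $Tr(w(p_j\otimes Id))>0$ into a condition specifically on $p_{i_0}$, incompatible with $w\leqslant \tilde Q_{i_0}$. Everything else is routine bookkeeping with amplifications and Observations \ref{obs_cut} and \ref{obs_doublecut}.
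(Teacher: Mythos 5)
Your proof is correct and follows essentially the same route as the paper: cut $\Psi$ by the tails $\tilde Q_n$ of $\sum_i p_i$, take the faces $F_n=F_{[\Psi_{\tilde Q_n}]}$, and rule out a common point by running property (2) of the preceding proposition. The paper closes the argument a bit more cleanly by first recording that $p_i$ is \emph{maximal} among cutting projections with $[\Psi_{p_i}]=[\Theta_i]$ (a direct consequence of the $\Theta_i$ being distinct extreme points), from which $[\Theta_i]\notin F_j$ for $i<j$ follows at once; your final step instead re-opens the proof of property (2) to extract $Tr\bigl(w(p_{i_0}\otimes Id)\bigr)>0$, which is the same mechanism appearing in a slightly less transparent form, and which also makes the intermediate observation that $i_0\geqslant 2$ unnecessary.
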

\begin{proof}
Let $[\Psi]$ be the sofic representation constructed above. By construction, we have cutting projections $(p_i)_{i\in\nz^*}$ such that $[\Psi_{p_i}]=[\Theta_i]$ and $\sum_ip_i=Id$. As $[\Theta_i]$ are different extreme points, $p_i\in\cc(\Psi)$ is maximal with the property $[\Psi_{p_i}]=[\Theta_i]$.

Let $q_j=\sum_{i\geqslant j}p_i$ and define the face $F_j=F_{[\Psi_{q_j}]}$. We show that $\cap_jF_j=\emptyset$. If $i<j$ then $[\Theta_i]\notin F_j$, otherwise $p_i$ would not be maximal. Assume that $\cap_jF_j$ is not empty. So there exists $p\in\cc(\Psi)$ such that $[\Psi_p]\in\cap_jF_j$. By the second property of the above Proposition, there exists $q\leqslant p$ and $i\in\nz^*$ such that $[\Psi_q]=[\Theta_i]$. But $q\leqslant p$ implies $[\Psi_q]\in\cap_jF_j$, so $[\Theta_i]\in\cap_jF_j$, contradiction.
\end{proof}

Of course this chain of decreasing faces does not help towards settling Krein-Milman for $Sof(G,P^\omega)$. We actually know for sure that $F_{[\Psi]}$ is inside the closure of the convex hull of extreme points. The following section constructs a sofic representation that is a candidate for an element outside of the convex hull.

For a sofic, non-amenable group $G$ there are two posibilities: either $Sof(G,P^\omega)$ has a finite number of extreme points, case in which Krei-Milman does not hold, as $Sof(G,P^\omega)$ is not separable; or $Sof(G,P^\omega)$ has an infinite number of extreme points, care in which, by the above theorem, it has a decreasing chain of faces with trivial intersection. In the next section we prove that the space $Sof(\ff_r,P^\omega)$ has uncountably many extreme points.

\section{Extreme points in $Sof(\ff_2,P^\omega)$}

In this section we construct uncountably many extreme points in the space $Sof(\ff_2,P^\omega)$. Each of these points will be an \emph{expander}, so let us first recall the definition. 

\begin{de}
A collection $p_1,\ldots,p_s\in P_n$ is called a \emph{$\lambda$-expander} if $\forall e\in D_n$ with $0<Tr(e)\leqslant\frac12$ we have $\lambda Tr(e)<\sum_{i=1}^sd_H(e,p_iep_i^*)$.
\end{de}

\begin{de}
A sofic representation $\Theta:G\to\Pi_{k\to\omega}P_{n_k}$  is called a \emph{expander} or \emph{$\lambda$-expander} if there exist $g_1,\ldots,g_s\in G$ and $p_k^i\in P_{n_k}$ such that $\Theta(g_i)=\Pi_{k\to\omega}p_k^i$ for $i=1,\ldots, s$ and $\{p_k^1,\ldots,p_k^s\}$ is an $\lambda$-expander for each $k\in\nz$.
\end{de}

From now on $a$ or $a_n$ denotes the matrix in $P_n$ associated to the cycle permutation $i\to i+1 (mod\ n)$. We also fix $\lambda=0.1$ for the rest of this section. The value is chosen such that $2\lambda<1/e$, as needed in the proof of Propostion \ref{k-permutations}. Also this value allows us to import some results from \cite{Pa3}. For example, taking two random elements in $P_n$ yields an $\lambda$-expander with great probability. For $n$-cycles this was already shown.

\begin{p}\cite[Proposition 5.11]{Pa3}
There exists a constant $\mu$ such that for at least $(1-\mu /n)\cdot (n-1)!$
$n$-cycles $c$ the following holds: for any projection $p\in D_n$ with $0 < Tr(p)\leqslant 1/2$ we have $\lambda Tr(p)< d_H(p, apa^*)+d_H(p, cpc^*)$.
\end{p}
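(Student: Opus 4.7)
The plan is to translate the statement into pure combinatorics and then apply a union bound that exploits the cyclic structure of $a$. A projection $p\in D_n$ corresponds to a subset $S\subseteq\{1,\ldots,n\}$ with $Tr(p)=|S|/n$; for any permutation $\sigma$ writing $\{1,\ldots,n\}$ as a single $n$-cycle, $n\cdot d_H(p,\sigma p\sigma^*)=2\, m_\sigma(S)$, where $m_\sigma(S)$ is the number of maximal $S$-runs in the cyclic sequence induced by $\sigma$. Hence the failure condition on $p$ reads
\[
m_a(S)+m_c(S)\le \lambda |S|/2,
\]
which requires $|S|\ge 4/\lambda$ (as $m_a,m_c\ge 1$) and, crucially, restricts the relevant $S$ to those which are already close to $a$-invariant: they have few blocks in the standard cyclic order. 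This last restriction is the saving that will make the union bound close.

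Next I would use two exact counts. The number of size-$k$ subsets $S$ with $m_a(S)=m$ is $\frac{n}{m}\binom{k-1}{m-1}\binom{n-k-1}{m-1}$, obtained by enumerating cyclic binary strings with $k$ ones and $m$ one-runs. An analogous argument (enumerating bijections $\phi\colon[n]\to[n]$ such that $\phi^{-1}(S)$ has $m$ cyclic blocks and then collapsing by rotations) gives that for any fixed size-$k$ subset $S$, the number of $n$-cycles $c$ with $m_c(S)=m$ is
\[
N(k,m)=\frac{k!(n-k)!}{m}\binom{k-1}{m-1}\binom{n-k-1}{m-1}.
\]
Summing over $S$ and over bad $c$, and using $k!(n-k)!/(n-1)!=n/\binom{n}{k}$, the union bound yields
\[
\frac{\#\{\text{bad cycles}\}}{(n-1)!}\;\le\; \sum_{k=\lceil 4/\lambda\rceil}^{\lfloor n/2\rfloor}\frac{n^2}{\binom{n}{k}}\sum_{\substack{m_a+m_c\le\lambda k/2\\ m_a,m_c\ge 1}}\frac{\binom{k-1}{m_a-1}\binom{n-k-1}{m_a-1}\binom{k-1}{m_c-1}\binom{n-k-1}{m_c-1}}{m_a m_c}.
\]
For $n<8/\lambda$ the outer sum is empty, so the statement is trivial; the remaining work is asymptotic.

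The final and hardest step is to show the right-hand side is $O(1/n)$. Writing $k=\alpha n$, $m_a=\beta_a k$, $m_c=\beta_c k$ and using the entropy form of Stirling, $\binom{N}{M}\le 2^{NH(M/N)}$ with $H$ the binary entropy, the logarithm of the summand equals
\[
n\Bigl[\alpha H(\beta_a)+(1-\alpha)H\bigl(\tfrac{\alpha\beta_a}{1-\alpha}\bigr)+\alpha H(\beta_c)+(1-\alpha)H\bigl(\tfrac{\alpha\beta_c}{1-\alpha}\bigr)-H(\alpha)\Bigr]+O(\log n).
\]
By concavity of $H$, the constrained maximum over $\beta_a+\beta_c\le\lambda/2$ is attained at $\beta_a=\beta_c=\lambda/4$, reducing everything to the single inequality $2F(\alpha,\lambda/4)<H(\alpha)$ for $\alpha\in(0,1/2]$, where $F(\alpha,\beta):=\alpha H(\beta)+(1-\alpha)H(\alpha\beta/(1-\alpha))$. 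The hypothesis $2\lambda<1/e$ is precisely what makes this hold comfortably: at $\alpha=1/2$ it becomes $2H(\lambda/4)<1$, with margin since $\lambda/4<1/(8e)$; and for $\alpha\to 0$ the dominant term $H(\alpha)\sim\alpha\log(1/\alpha)$ beats the $F$-contributions, which only scale as $\alpha\beta\log(1/(\alpha\beta))$, with limiting ratio $\lambda/2<1$. Once the exponent is strictly negative by some $\delta>0$ uniform in $\alpha$, the polynomial number of summands is absorbed and the bound $O(2^{-\delta n})\le\mu/n$ follows for large $n$; the finitely many small $n$ are absorbed into $\mu$. The main obstacle is exactly this uniform entropy inequality, which must be checked over the full range of $\alpha$ with some margin.
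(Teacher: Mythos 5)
Your translation from the operator statement to cyclic run counts is correct: for an $n$-cycle $\sigma$ one indeed has $n\,d_H(p,\sigma p\sigma^*)=2\,m_\sigma(S)$, the two exact counts (size-$k$ subsets with $m$ cyclic $a$-runs, and $n$-cycles $c$ realising a given run count against a fixed $S$) are both right, and the union bound then reduces the problem to the entropy inequality $2F(\alpha,\lambda/4)<H(\alpha)$ on $\alpha\in(0,1/2]$. This is a genuinely different route from what the paper points to. The paper does not reprove the proposition; it quotes it from \cite{Pa3} and remarks that the proof there rests on Friedman's theorem (Theorem 1.2 of \cite{Fr}) --- a near-Ramanujan spectral bound for the graph generated by $a$ and a random permutation, from which the edge expansion follows. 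You replace that deep external input with a self-contained first-moment count. The tradeoff is clear: your argument is elementary and transparent, but works only for small $\lambda$ and yields no spectral information, whereas the Friedman route gives much sharper expansion at the cost of importing a very long memoir.

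One step as written does not go through. You assert the exponent is negative ``by some $\delta>0$ uniform in $\alpha$'' and conclude $O(2^{-\delta n})$; but $g(\alpha):=2F(\alpha,\lambda/4)-H(\alpha)\to 0$ as $\alpha\to 0$, so for $k$ near the threshold $\lceil 4/\lambda\rceil$ the summand decays only polynomially, roughly like $n^{-k(1-\lambda/2)}$. The conclusion survives --- at $\lambda=0.1$ the extremal $k=40$ already gives $O(n^{-38})$, and the polynomially many $(k,m_a,m_c)$ triples keep the total far below $\mu/n$ --- but the small-$\alpha$ end needs a direct estimate (or the bound $g(\alpha)\le -c\,\alpha\log(1/\alpha)$) rather than a uniform exponential. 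Also, the condition $2\lambda<1/e$ you invoke is introduced in the paper for a different, later proposition and is not what drives your entropy inequality; what you actually need here is simply that $\lambda$ is small enough for $2F(\alpha,\lambda/4)<H(\alpha)$, which holds with ample margin at $\lambda=0.1$.
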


This time we work with arbitrary permutations, not only cycles. The above proposition still holds true, with virtually the same proof. If we track down the proof of Theorem 5.11 from \cite{Pa3}, we see that the key ingredient was Theorem 1.2 of \cite{Fr}. Theorem 1.1 of the same article yields the following.

\begin{p}\label{P5.11}
There exists a constant $\mu$ such that for at least $(1-\mu /n)\cdot n!$
permutations $c$ the following holds: for any projection $p\in D_n$ with $0 < Tr(p)\leqslant 1/2$ we have $\lambda Tr(p)< d_H(p, apa^*)+d_H(p, cpc^*)$.
\end{p}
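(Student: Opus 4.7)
The plan is to adapt the proof of \cite[Proposition 5.11]{Pa3} essentially verbatim, swapping out the single probabilistic input. First, I would translate the conclusion into a combinatorial edge-expansion statement. A projection $p\in D_n$ is determined by its support $E\subseteq\{1,\ldots,n\}$ of cardinality $k=nTr(p)$, and for any $q\in P_n$ the projection $qpq^\ast$ has support $q(E)$. Hence $d_H(p,qpq^\ast)=\frac{1}{n}|E\triangle q(E)|$, and the desired inequality is equivalent to
\[
\lambda|E|<|E\triangle a(E)|+|E\triangle c(E)|
\]
for every non-empty $E$ with $|E|\leq n/2$. The right-hand side is (twice) the edge boundary of $E$ in the $4$-regular multigraph $\Gamma_c$ on $\{1,\ldots,n\}$ whose edges are the pairs $\{i,a(i)\}$ and $\{i,c(i)\}$ for $i\in\{1,\ldots,n\}$.

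Next, I would invoke \cite[Theorem 1.1]{Fr} in place of the Theorem 1.2 used in \cite{Pa3}. The former result applies to uniformly random permutations in $P_n$ (rather than to random $n$-cycles, which is what Theorem 1.2 covers) and supplies a nontrivial spectral-gap bound for $\Gamma_c$, of the form $\rho(\Gamma_c)<\rho_0$ for some fixed $\rho_0<4$, failing on an exceptional set of size at most $\mu(n-1)!=(\mu/n)\cdot n!$. A standard discrete Cheeger inequality then converts this spectral gap into a uniform edge-expansion bound valid for all non-empty $E$ with $|E|\leq n/2$, which recovers the target inequality with the comfortably small constant $\lambda=0.1$ (this value, well below what the spectral gap actually permits, is chosen so that the auxiliary condition $2\lambda<1/e$ is also satisfied).

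The bulk of the argument --- the translation into sets, the Cheeger step, and the bookkeeping of the constant $\mu$ --- carries over from \cite{Pa3} unchanged; only the probabilistic input is different. The main obstacle is therefore not in the expansion arithmetic but in matching the precise hypotheses of \cite[Theorem 1.1]{Fr} to the random-permutation model used here and confirming that the exceptional-set size is genuinely of order $1/n$ rather than merely $o(1)$, as this dependence is what delivers the stated count of $(1-\mu/n)\cdot n!$ good permutations.
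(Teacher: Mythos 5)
Your proposal matches the paper's approach: the authors give no detailed argument here, observing only that the proof of \cite[Proposition 5.11]{Pa3} carries over once \cite[Theorem 1.2]{Fr} is replaced by \cite[Theorem 1.1]{Fr}, and your outline (reduce to edge expansion of the $4$-regular multigraph built from $a$ and $c$, then extract a uniform Cheeger bound from Friedman's spectral estimate) is exactly the deferred proof. The caveat you raise at the end --- that one must confirm both that the fixed-cycle-plus-random-permutation model actually falls within the scope of Friedman's Theorem 1.1 and that the exceptional probability is genuinely $O(1/n)$ --- is precisely what the paper leaves implicit, and is the right thing to scrutinize before treating the swap as routine.
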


The same game we have to play with Theorem 5.20.

\begin{p}\cite[Theorem 5.20]{Pa3}
For any $\ve>0$ and nontrivial $\gamma\in\mathbb F_2$, there exists $n_0$ such that for any $n>n_0$ for at least $(1-\ve)[(n-1)!]$ $n$-cycles $c\in P_n$ we have 
$d_H\left(\gamma(a,c), 1_n\right)>1-\ve$.
\end{p}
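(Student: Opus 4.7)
The plan is to follow the argument of \cite[Theorem 5.20]{Pa3}. Since $d_H(g,1_n)=1-\frac{1}{n}|Fix(g)|$, the inequality $d_H(\gamma(a,c),1_n)>1-\ve$ is equivalent to $|Fix(\gamma(a,c))|<\ve n$, so it suffices to bound the number of $n$-cycles $c$ for which $\gamma(a,c)$ has at least $\ve n$ fixed points by $\ve\cdot(n-1)!$. The natural strategy is a first moment calculation over a uniformly random $n$-cycle $c$, combined with Markov's inequality.

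First I would compute $\mathbb{E}_c\bigl[\,|Fix(\gamma(a,c))|\,\bigr]=\sum_{i=1}^n P_c[\gamma(a,c)(i)=i]$. Put $\gamma$ in reduced form in $\ff_2$; if it is a power of $a$ alone, the statement is trivial once $n$ exceeds the exponent, so we may assume the reduced form contains $m\geqslant 1$ letters of type $c^{\pm 1}$. For fixed $i$ the equation $\gamma(a,c)(i)=i$ forces the trajectory $i=i_0\to a^{k_1}(i_0)\to c^{\eta_1}(a^{k_1}(i_0))\to\cdots\to i_0$ to close up, which translates into $m$ constraints of the form $c(x_j)=y_j$ on the random cycle $c$.

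Next I would estimate: for a "generic" $i$ these $m$ constraints are pairwise distinct and consistent with being a portion of a single $n$-cycle, and a standard count shows the number of $n$-cycles realising them is of order $(n-m-1)!$. Hence $P_c[\gamma(a,c)(i)=i]=O(n^{-m})$, and summing over $i$ yields $\mathbb{E}_c\bigl[\,|Fix(\gamma(a,c))|\,\bigr]=O(n^{1-m})=O(1)$. Markov's inequality then gives that the fraction of $n$-cycles with $|Fix(\gamma(a,c))|>\ve n$ is at most $O(1/(\ve n))$, which drops below $\ve$ once $n$ is larger than some $n_0=n_0(\ve,\gamma)$.

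The main obstacle is the combinatorial bookkeeping in the counting step: for "degenerate" $i$ the imposed constraints may coincide or contradict each other, producing either spurious fixed points or none at all, and one has to argue that such $i$ are too few to spoil the order of magnitude of the expectation. The non-triviality of $\gamma$ enters precisely to ensure $m\geqslant 1$, and hence that the $m$ constraints truly cut down the count of admissible cycles; everything else is routine averaging.
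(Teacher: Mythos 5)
The paper itself does not reprove this proposition; it simply recalls it as \cite[Theorem 5.20]{Pa3}, and the surrounding discussion only indicates that the original argument leans on an eigenvalue estimate from \cite{Fr}. Your first-moment-plus-Markov strategy is the natural elementary attack on such a statement, and the skeleton is sound: reduce to bounding $\mathbb{E}_c\big[|Fix(\gamma(a,c))|\big]$ over a uniform $n$-cycle $c$, dispose of $\gamma\in\langle a\rangle$ separately, then invoke Markov. So the overall route is reasonable, though not visibly the same as the cited one.

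The central estimate, however, is wrong as stated. You treat the $m$ constraints $c(x_j)=y_j$ produced by the trajectory as if all the $x_j,y_j$ were determined by $i$ and $\gamma$, and conclude $P_c[\gamma(a,c)(i)=i]=O(n^{-m})$. For $m\geqslant 2$ the outputs of the intermediate $c^{\pm 1}$ steps are \emph{free} variables; only the last is pinned down by the requirement to return to $i$. The event therefore decomposes into a union over roughly $n^{m-1}$ admissible trajectories, each imposing $m$ edge constraints of probability about $(n-m-1)!/(n-1)!\approx n^{-m}$ for a random $n$-cycle, which yields $P_c[\gamma(a,c)(i)=i]=\Theta(1/n)$ rather than $O(n^{-m})$. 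A concrete check: for $\gamma=x_2 x_1 x_2^{-1} x_1^{-1}$ the fixed-point event is $c^{-1}(i)=c^{-1}(i-1)+1$, which has probability of order $1/n$, not $1/n^2$. Consequently the expectation is $\Theta(1)$, not $O(n^{1-m})$.

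Fortunately $O(1)$ is all that Markov needs, so the conclusion survives once the counting is repaired. But the bookkeeping is heavier than your sketch allows: the degeneracies live at the level of \emph{trajectories} (coincidences among intermediate points, constraints collapsing to fewer distinct edges, or edge sets forming a proper subcycle — which have probability zero for an $n$-cycle $c$), not of a few exceptional indices $i$. One has to check that a collapse reducing the number of distinct constraints by $k$ also reduces the number of admissible trajectories by a factor $n^{-k}$, so each collapse class still contributes $O(1/n)$ per $i$. That case analysis is the genuine content of the argument; the passage from ``$m$ constraints'' to $O(n^{-m})$ and then to $O(n^{1-m})$ as written skips over it and lands on an incorrect intermediate bound.
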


Again, replacing Theorem 1.2 of \cite{Fr} with Theorem 1.1 of the same article, we get the following.

\begin{p}\label{T5.20}
For any $\ve>0$ and nontrivial $\gamma\in\mathbb F_2$, there exists $n_0$ such that for any $n>n_0$ for at least $(1-\ve)\cdot n!$ elements $c\in P_n$ we have 
$d_H\left(\gamma(a,c), 1_n\right)>1-\ve$.
\end{p}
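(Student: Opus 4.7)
The plan is to follow the proof of Theorem 5.20 of \cite{Pa3} essentially verbatim, substituting the appeal to Theorem 1.2 of \cite{Fr} (uniform random $n$-cycles) by Theorem 1.1 of \cite{Fr} (uniform random permutations), in exact parallel to how the previous proposition was upgraded. Since $d_H(\gamma(a,c), 1_n) = 1 - \frac{1}{n}|\mathrm{Fix}(\gamma(a,c))|$, the conclusion $d_H(\gamma(a,c), 1_n) > 1-\ve$ is equivalent to demanding that $\gamma(a,c)$ has fewer than $\ve n$ fixed points for all but an $\ve$-fraction of $c \in P_n$.

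First I would invoke the appropriate Friedman-type expectation estimate: for any non-trivial reduced word $\gamma \in \ff_2$, with $a \in P_n$ the standard $n$-cycle and $c$ uniform on $P_n$, the expected fixed-point count of the word evaluation,
$$\mathrm{E}_{c\in P_n}\bigl[|\mathrm{Fix}(\gamma(a,c))|\bigr],$$
is bounded by a constant $C(\gamma)$ depending only on $\gamma$, uniformly in $n$. This is the arbitrary-permutation analogue of the estimate used in \cite{Pa3}, and is exactly what Theorem 1.1 of \cite{Fr} provides in this setting. Markov's inequality then gives
$$\Pr_{c\in P_n}\bigl(|\mathrm{Fix}(\gamma(a,c))| \geqslant \ve n\bigr) \leqslant \frac{C(\gamma)}{\ve\, n},$$
so taking $n_0 > C(\gamma)/\ve^2$ makes the right-hand side smaller than $\ve$ for every $n>n_0$, which is the stated bound.

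The main obstacle, or rather the critical input, is precisely this expectation estimate: one has to check that Theorem 1.1 of \cite{Fr} really does yield an $O(1)$ bound on the expected number of fixed points when one of the two generators is the deterministic cycle $a$ and only the other is a uniformly random permutation. Once this is granted, the remainder of the proof is a copy of the argument from \cite{Pa3}, with $(n-1)!$ replaced by $n!$ in the counting step and the denominator of Markov's inequality adjusted accordingly. No new combinatorial idea is required beyond the generalised Friedman bound.
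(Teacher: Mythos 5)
Your proposal is essentially the same approach as the paper, which gives no independent argument but simply directs the reader to the proof of Theorem 5.20 in \cite{Pa3} with Theorem 1.2 of \cite{Fr} (random $n$-cycles) replaced by Theorem 1.1 (uniform random permutations); your expectation-plus-Markov sketch is a reasonable reconstruction of the underlying counting, and the normalisation $d_H(\gamma(a,c),1_n)=1-\tfrac1n|\mathrm{Fix}(\gamma(a,c))|$ and the choice $n_0>C(\gamma)/\ve^2$ are correct. You also correctly identify that the only point genuinely requiring verification is the $O(1)$ expected-fixed-point bound in the mixed model (deterministic cycle $a$, random $c\in P_n$), which is exactly the substitution the paper asserts without further comment.
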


In our argument we need two more results from \cite{Pa3}.

\begin{p}\cite[Proposition 5.12]{Pa3}\label{P5.12}
Let $\ve>0$ and $x\in P_n$. Then the number of permutations $y\in P_n$ such
that $d_H(x, y) <\ve$ is less than $n^{\lfloor n\ve\rfloor}$.
\end{p}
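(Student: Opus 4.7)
The plan is a direct combinatorial enumeration. By left-invariance of the Hamming distance on $P_n$ (for $g \in P_n$, the indices where $gx$ and $gy$ differ coincide with those where $x$ and $y$ differ), the bijection $y \mapsto x^{-1}y$ of $P_n$ carries the ball $\{y : d_H(x,y) < \ve\}$ onto $\{y' : d_H(\id, y') < \ve\}$. So it suffices to count the latter.

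Setting $K := \lfloor n\ve \rfloor$, for any such $y'$ the non-fixed-point set $S_{y'} := \{i : y'(i) \neq i\}$ has cardinality strictly less than $n\ve$, hence at most $K$. Because $y'$ is a bijection fixing the complement of $S_{y'}$ pointwise, it permutes $S_{y'}$, and more generally it permutes any superset $T \supseteq S_{y'}$: the extra fixed points in $T \setminus S_{y'}$ stay in $T$, and then finiteness plus injectivity force $y'(T) = T$. I would then enlarge each $S_{y'}$ canonically to a set $T_{y'}$ of size exactly $K$, say by appending the smallest indices outside $S_{y'}$, thereby obtaining an injection $y' \mapsto (T_{y'}, y'|_{T_{y'}})$ from the Hamming ball into the set of pairs $(T, \sigma)$ with $|T|=K$ and $\sigma$ a bijection of $T$. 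The number of such pairs is $\binom{n}{K}K! = n(n-1)\cdots(n-K+1) \leq n^K = n^{\lfloor n\ve \rfloor}$, which is the desired bound.

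I do not foresee a significant obstacle; the argument is purely combinatorial. The only mild subtleties are (i) fixing a canonical enlargement $T_{y'}$ so that the map is genuinely injective rather than merely surjective onto its image, and (ii) the trivial edge cases $K \in \{0,1\}$, where the strict inequality in the statement degenerates to equality and must be handled by direct inspection (for example when $K=0$, the ball contains only $y=x$, so the count is $1 = n^0$ and the claim as literally stated just fails, but is not used in this borderline regime).
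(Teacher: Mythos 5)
The paper does not prove this proposition; it is cited directly from \cite{Pa3} and used as a black box, so there is no internal argument to compare against. Your proof is a correct and self-contained derivation. The reduction via bi-invariance of the Hamming metric ($y \mapsto x^{-1}y$) to counting the ball around the identity is sound. The injection $y' \mapsto (T_{y'}, y'|_{T_{y'}})$ is well-defined once $K=\lfloor n\ve\rfloor \leqslant n$ (for $K>n$ the trivial bound $n!<n^n\leqslant n^K$ covers it) and genuinely injective, since a permutation fixing $\{1,\ldots,n\}\setminus T_{y'}$ pointwise is determined by its restriction to $T_{y'}$. The resulting count $\binom{n}{K}K! = n(n-1)\cdots(n-K+1)$ is strictly less than $n^K$ precisely when $K\geqslant 2$. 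You correctly isolate the two borderline cases: for $K=0$ the ball is the singleton $\{x\}$ and the strict inequality $1<n^0$ literally fails, while for $K=1$ the injection only yields $\leqslant n$ and one must add the observation that a permutation cannot move exactly one point (so the ball is again $\{x\}$, and $1<n$ holds for $n>1$). Neither regime arises in the paper's application of the proposition (Proposition \ref{S-distance} uses $\ve=\lambda=0.1$ with $n\to\infty$, so $K\to\infty$), so the statement as cited is unproblematic in context.
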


\begin{p}\cite[Proposition 5.13]{Pa3}\label{P5.13}
Let $\ve>0$. The number of permutations $y\in P_n$ such that $d_{H}(ay,ya)<\ve$ is less than $n^{\lfloor n\ve\rfloor+1}$.
\end{p}

When constructing many sofic representations of $\ff_2$, we have to make sure that are not conjugated. This is where the next definition comes into play. We shall take care of amplifications later. 

\begin{de}
For two collections of permutations in $P_n$, $(x_1,x_2,\ldots,x_k)$ and $(y_1,y_2,\ldots,y_k)$ define the distance:
\[d_S((x_1,x_2,\ldots,x_k),(y_1,y_2,\ldots,y_k))=\min_{p\in P_n}\sum_{i=1}^kd_H(x_i,py_ip^*).\]
\end{de}

We now prove a first result estimating the number of permutations $c\in P_n$ with certain properties. 

\begin{p}\label{S-distance}
Let $b\in P_n$. The number of permutations $c\in P_n$ such that $d_S((a,b),(a,c))<\lambda$ is less than $n^{2\lfloor n\lambda\rfloor+1}$.
\end{p}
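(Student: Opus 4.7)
The plan is to unpack the definition of $d_S$ and then count the pairs $(p,c)$ that can witness the inequality.

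If $d_S((a,b),(a,c)) < \lambda$, then there is some $p \in P_n$ with
\[ d_H(a, p a p^*) + d_H(b, p c p^*) < \lambda. \]
Since both summands are nonnegative, each is itself strictly less than $\lambda$. I will therefore estimate separately how many $p$ can satisfy the first inequality and, once $p$ is fixed, how many $c$ can satisfy the second.

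First, using the right- and left-invariance of the Hamming distance on $P_n$, we have $d_H(a, pap^*) = d_H(ap, pa)$. So the first condition says $d_H(ap, pa) < \lambda$, which is exactly the hypothesis of Proposition \ref{P5.13}. Hence the number of admissible $p$ is bounded above by $n^{\lfloor n\lambda \rfloor + 1}$.

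Second, fix such a $p$. The condition $d_H(b, pcp^*) < \lambda$ is equivalent to $d_H(p^* b p, c) < \lambda$, so $c$ lies in a Hamming ball of radius $\lambda$ around the fixed permutation $p^* b p \in P_n$. Proposition \ref{P5.12} then bounds the number of such $c$ by $n^{\lfloor n\lambda \rfloor}$.

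Multiplying the two bounds (each admissible $c$ is counted at least once in the product, giving a valid upper bound) gives
\[ n^{\lfloor n\lambda\rfloor + 1} \cdot n^{\lfloor n\lambda \rfloor} = n^{2\lfloor n\lambda \rfloor + 1}, \]
as claimed. There is no real obstacle here: the only thing to be careful about is the bi-invariance step that rewrites $d_H(a, pap^*)$ as $d_H(ap, pa)$, so that Proposition \ref{P5.13} can be invoked with its stated form; once that is done, the result follows by combining the two counting lemmas already available.
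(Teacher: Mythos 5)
Your proof is correct and follows essentially the same route as the paper's: both decouple the two summands in the definition of $d_S$, bound the number of admissible conjugating permutations $p$ via Proposition \ref{P5.13}, and then for each such $p$ bound the number of $c$ in the Hamming ball of radius $\lambda$ around $p^*bp$ via Proposition \ref{P5.12}, multiplying the two counts. The paper packages the first step by introducing the set $A=\{q^*bq : d_H(qa,aq)<\lambda\}$, but this is just a notational variant of the same counting.
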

\begin{proof}
Let $c\in P_n$ be such that $d_S((a,b),(a,c))<\lambda$. Then there exists $p\in P_n$ such that $d_H(a,pap^*)<\lambda$ and $d_H(b,pcp^*)<\lambda$. Let $A=\{q^*bq:q\in P_n, d_H(qa,aq)<\lambda\}$. It follows that $d_H(c,A)<\lambda$.

By Proposition \ref{P5.13}, the cardinality of $A$ is less than $n^{\lfloor n\lambda\rfloor+1}$. As $d_H(c,A)<\lambda$, by Proposition \ref{P5.12}, $c$ is in a set of cardinality at most $n^{\lfloor n\lambda\rfloor+1}\cdot n^{\lfloor n\lambda\rfloor}=n^{2\lfloor n\lambda\rfloor+1}$.
\end{proof}

Now we are ready to prove the main technical result, that will make the construction of uncountably many extreme points possible.

\begin{p}\label{k-permutations}
Let $k\in\nz^*$. There exists $n_k\in\nz$ and $k$ permutations $c_1,\ldots,c_k\in P_{n_k}$ such that 
\begin{enumerate}
\item $(a,c_i)$ is a $\lambda$-expander for each $i=1,\ldots,k$;
\item $d_S((a,c_i),(a,c_j))>\lambda$ for all $i\neq j$;
\item $Tr(\gamma(a,c_i))<1/k$ for all $i=1,\ldots,k$ and $\gamma\in B_k(\ff_2)$.
\end{enumerate}
\end{p}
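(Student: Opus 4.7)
The plan is a counting argument in $P_n$ combining Propositions \ref{P5.11}, \ref{T5.20}, and \ref{S-distance}. Fix $k\in\nz^*$ and let $n$ be chosen at the end. Let $B'_k=B_k(\ff_2)\setminus\{e\}$ (condition $(3)$ is vacuous for $\gamma=e$) and set $\ve_k=1/(4k|B'_k|)$, so that $\ve_k<1/k$ and $|B'_k|\ve_k=1/(4k)$. I will first isolate a large set $G_k\subseteq P_n$ of permutations $c$ satisfying $(1)$ and $(3)$, then greedily extract $k$ elements of $G_k$ that are pairwise far apart in $d_S$.

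For condition $(1)$, Proposition \ref{P5.11} excludes at most $\mu\cdot(n-1)!$ permutations. For condition $(3)$, I apply Proposition \ref{T5.20} with $\ve=\ve_k$: for each $\gamma\in B'_k$ there is a threshold $n_\gamma$ past which the set of $c\in P_n$ with $d_H(\gamma(a,c),1_n)\leq 1-\ve_k$, equivalently $Tr(\gamma(a,c))\geq\ve_k$, has size at most $\ve_k\cdot n!$. Since $\ve_k<1/k$, every $c$ outside this bad set automatically satisfies $Tr(\gamma(a,c))<1/k$. Summing the bad sets over the finite collection $B'_k$, for $n$ larger than every $n_\gamma$ and larger than $8\mu$,
\[
|G_k|\;\geq\; n!-\mu(n-1)!-|B'_k|\ve_k\cdot n!\;\geq\;\left(1-\frac{\mu}{n}-\frac{1}{4k}\right)n!\;\geq\;\frac{n!}{2}.
\]

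Now I would impose condition $(2)$ greedily inside $G_k$ using Proposition \ref{S-distance}. Pick $c_1\in G_k$ arbitrarily. Having chosen $c_1,\ldots,c_{j-1}\in G_k$ at pairwise $d_S$-distance $>\lambda$, the set of $c\in P_n$ with $d_S\bigl((a,c_i),(a,c)\bigr)<\lambda$ for some $i<j$ has size at most $(j-1)\cdot n^{2\lfloor n\lambda\rfloor+1}$. So a valid next pick $c_j\in G_k$ exists as long as $|G_k|>(k-1)\cdot n^{2\lfloor n\lambda\rfloor+1}$. Using $n!\geq(n/e)^n$, the ratio $n!/n^{2\lfloor n\lambda\rfloor+1}$ is bounded below by $n^{n(1-2\lambda)-1}/e^n$, which tends to $+\infty$ as $n\to\infty$ since $2\lambda=0.2<1$. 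Taking $n=n_k$ larger than every $n_\gamma$ and large enough that $n!/2>k\cdot n^{2\lfloor n\lambda\rfloor+1}$ completes the selection of $c_1,\ldots,c_k$.

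The main obstacle is essentially bookkeeping: $\ve_k$ depends on $k$ via the exponentially growing $|B'_k|$, so the threshold from Proposition \ref{T5.20} depends on $k$ and must be fixed \emph{before} the Stirling estimate is invoked in the greedy step. The condition $2\lambda<1/e$ cited in the paper is comfortably stronger than what the Stirling step actually requires (any $2\lambda<1$ suffices); its real role is to keep $\lambda=0.1$ small enough that the expander-type results imported from \cite{Pa3} remain valid in the first place.
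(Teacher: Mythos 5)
Your proof follows the same strategy as the paper's: use Propositions \ref{P5.11} and \ref{T5.20} to isolate a large subset of $c\in P_n$ satisfying conditions $(1)$ and $(3)$, then greedily extract $k$ pairwise $d_S$-separated elements via Proposition \ref{S-distance} and a Stirling comparison of $n!$ against $n^{2\lfloor n\lambda\rfloor+1}$. Your bookkeeping is in fact more careful than the paper's (choosing $\ve_k$ small relative to $|B_k(\ff_2)|$ and fixing all thresholds before invoking the Stirling estimate), and your remark that $2\lambda<1$ suffices for the Stirling step is correct, but the argument is essentially identical.
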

\begin{proof}
Recall that $B_k(\ff_2)$ is the ball of radius $k$ around the origin, in the Cayley graph of $\ff_2$. We cannot ask condition $(3)$ for all elements of $\ff_2$, so we do it for larger and larger finite subsets. Standard procedure in the theory of sofic groups.

Fix $\ve>0$. By Propostion  \ref{P5.11} and Propostion \ref{T5.20}, applied for each $\gamma\in B_k(\ff_2)$, we get that for large enough $n$, for at least $(1-\ve)\cdot n!$ permutations $c\in P_n$, conditions $(1)$ and $(3)$ are satisfied. 

We must now choose $c_1,\ldots,c_k$ in this set, such that $d_S((a,c_i),(a,c_j))>\lambda$.
Here is where Proposition \ref{S-distance} comes in. Choosing a permutation $c\in P_n$ will exclude at most $n^{2\lfloor n\lambda\rfloor+1}$ other permutations. For large enough $n$, we have $k\cdot n^{2\lfloor n\lambda\rfloor+1}<(1-\ve)\cdot n!$. As such we can choose $c_1,\ldots, c_k$ with the required properties.
\end{proof}

\begin{te}
There exists $\Theta_i:\ff_2\to\Pi_{k\to\omega}P_{n_k}$, $i\in[0,1]$ sofic representations, such that each one is an expander, and $\Theta_i$, $\Theta_j$ are not conjugated for any $i\neq j$.
\end{te}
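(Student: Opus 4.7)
The plan is to select, for each $i\in[0,1]$, one permutation from the family $(c_1^{(k)},\ldots,c_k^{(k)})$ produced by Proposition \ref{k-permutations} at level $k$ via a selection function $\phi_i:\nz\to\nz$ with $\phi_i(k)\in\{1,\ldots,k\}$, and then set $\Theta_i:\ff_2\to\Pi_{k\to\omega}P_{n_k}$ to be the homomorphism sending the two free generators $a,b$ to $\Pi_{k\to\omega}a_{n_k}$ and $\Pi_{k\to\omega}c^{(k)}_{\phi_i(k)}$ respectively. A convenient explicit choice is $\phi_i(k)=\min(\lfloor ik\rfloor+1,k)$, so that $\lim_k\phi_i(k)/k=i$; this ensures that for $i\neq j$ the set $\{k:\phi_i(k)\neq\phi_j(k)\}$ is cofinite, and therefore lies in the (non-principal) ultrafilter $\omega$.

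With the $\Theta_i$ defined, I would next verify soficity and the expander property, both of which come essentially for free. For any nontrivial $\gamma\in\ff_2$, once $k\geqslant|\gamma|$ we have $\gamma\in B_k(\ff_2)$, so property $(3)$ of Proposition \ref{k-permutations} yields $Tr(\gamma(a_{n_k},c^{(k)}_{\phi_i(k)}))<1/k$ for all but finitely many $k$; passing to the ultralimit gives $Tr(\Theta_i(\gamma))=0$, hence $\Theta_i(\gamma)\neq Id$, proving that $\Theta_i$ is a sofic representation. The expander assertion is immediate from property $(1)$: the pair $(a_{n_k},c^{(k)}_{\phi_i(k)})$ is a $\lambda$-expander for every $k$, so $\Theta_i$ is a $\lambda$-expander in the sense of the definition given above.

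The heart of the argument is the non-conjugacy, and here I would argue by contradiction. Fix $i\neq j$ in $[0,1]$ and suppose $u=\Pi_{k\to\omega}u_k\in\Pi_{k\to\omega}P_{n_k}$ satisfies $u\Theta_i(g)u^*=\Theta_j(g)$ for all $g\in\ff_2$. Specialising to $a$ and $b$ gives
\[\lim_{k\to\omega}d_H(u_ka_{n_k}u_k^*,a_{n_k})=0\quad\text{and}\quad\lim_{k\to\omega}d_H(u_kc^{(k)}_{\phi_i(k)}u_k^*,c^{(k)}_{\phi_j(k)})=0,\]
and, by the very definition of $d_S$, it follows that $\lim_{k\to\omega}d_S((a_{n_k},c^{(k)}_{\phi_j(k)}),(a_{n_k},c^{(k)}_{\phi_i(k)}))=0$. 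On the other hand, on the $\omega$-large set $\{k:\phi_i(k)\neq\phi_j(k)\}$, property $(2)$ of Proposition \ref{k-permutations} gives $d_S>\lambda$, a contradiction.

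The main obstacle I anticipate is conceptual rather than computational: the theorem as stated asks for non-conjugacy inside $\Pi_{k\to\omega}P_{n_k}$ with fixed dimensions, which the $d_S$ argument handles cleanly. If one also wanted the stronger conclusion $[\Theta_i]\neq[\Theta_j]$ in $Sof(\ff_2,P^\omega)$ (non-equivalence allowing amplifications, as needed to conclude uncountably many extreme points), one would need a rigidity input: namely, that for an expander sofic representation the commutant in $\Pi_{k\to\omega}D_{n_k}$ is essentially trivial, so that an amplified conjugation can be descended to a non-amplified one and the same $d_S$ bound can be invoked.
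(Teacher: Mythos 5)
Your proof is correct and follows essentially the same route as the paper's: both proofs construct, for each point of $[0,1]$, a selection function $k\mapsto\phi(k)\in\{1,\ldots,k\}$ picking one permutation out of the family $\{c_k^1,\ldots,c_k^k\}$ from Proposition \ref{k-permutations}, and deduce non-conjugacy from property $(2)$ of that proposition together with the fact that any two selection functions eventually disagree (so the disagreement set lies in $\omega$). The only difference is cosmetic: the paper routes the parametrisation through an almost-disjoint family of subsets of $\nz$ (via the sets $F_t$ and the functions $f_F(k)=\max\{i\in F:i\le k\}$), whereas you use the direct formula $\phi_i(k)=\min(\lfloor ik\rfloor+1,k)$, which is arguably cleaner since for $i\neq j$ the set $\{k:\phi_i(k)\neq\phi_j(k)\}$ is visibly cofinite. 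You also correctly flag, as the paper itself does in the sentence following the theorem, that this only rules out conjugacy in $\Pi_{k\to\omega}P_{n_k}$ at fixed dimensions, and that to conclude $[\Theta_i]\neq[\Theta_j]$ in $Sof(\ff_2,P^\omega)$ one needs the expander-rigidity result about amplifications proved in the next section.

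Two small notational points: you use $a,b$ for the free generators while $a$ already denotes the long cycle $a_n$; prefer $x_1,x_2$ as in the paper. And in the display for $d_S$ it is worth writing one line explaining that the $\min$ over $p$ in the definition of $d_S$ is bounded above by the choice $p=u_k$, which is the only step linking the conjugacy hypothesis to the $d_S$ estimate.
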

\begin{proof}
Use the previous proposition to construct a sequence $(n_k)_k$, and sets $A_k=\{c_k^1,\ldots,c_k^k\}\subset P_{n_k}$, with the specified properties.

In order to construct our sofic representations, we need an uncountable family $\mathcal{F}$ of infinite subsets of $\nz$, such that for $F_1,F_2\in\mathcal{F}$, $F_1\neq F_2$ we have $F_1\cap F_2$ is finite. An example was constructed in Remark 3.3 of \cite{Ca-Pa}. For $t\in[1/10,1)$, construct $F_t=\{\lfloor 10^kt\rfloor:k\in\nz^*\}$. It is easy to see that $\mathcal{F}=\{F_t:t\in[1/10,1)\}$ has the required property.

For $F\in\mathcal{F}$ construct a function $f:\nz\to\nz$, $f_F(k)=max\{i\in F:i\leqslant k\}$. Let $x_1,x_2$ be the two generators of $\ff_2$. For $F\in\mathcal{F}$ define a sofic morphism $\Theta_F:\ff_2\to\Pi_{k\to\omega}P_{n_k}$ by $\Theta_F(x_1)=\Pi_{k\to\omega}a_{n_k}$ and $\Theta_F(x_2)=\Pi_{k\to\omega}c_k^{f_F(k)}$. The third condition in Propostion \ref{k-permutations} ensures that $\Theta_F$ are sofic representations of $\ff_2$. The first condition implies that $\Theta_F$ is an expander. The second condition, together with the properties of the family $\mathcal{F}$ guarantee that $\{\Theta_F:F\in\mathcal{F}\}$ are not conjugated.
\end{proof}

We didn't specified until now, but expander easily implies being an extreme point.

\begin{p}\cite[Proposition 3.13]{Ar-Pa}
An expander sofic representation is an extreme point.
\end{p}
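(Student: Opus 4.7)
The plan is to combine the characterization of extreme points from Lemma 2.12 of \cite{Pa2} (that $[\Theta]$ is extreme iff $[\Theta_p]=[\Theta]$ for every cutting projection $p$) with the bijection $\cc(\Theta)\simeq(\Pi_{k\to\omega}D_{n_k})_+^1\cap\Theta^\prime$ proved above. The expander hypothesis should force this latter set to collapse to the scalars $[0,1]\cdot Id$, and extremality then follows by a short transfer argument through the bijection.

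The core step is to show every projection $e\in\Pi_{k\to\omega}D_{n_k}\cap\Theta^\prime$ has $Tr(e)\in\{0,1\}$. Since $\Pi_{k\to\omega}D_{n_k}\cap\Theta^\prime$ is a von Neumann subalgebra closed under functional calculus, spectral calculus then forces every self-adjoint element of $(\Pi_{k\to\omega}D_{n_k})_+^1\cap\Theta^\prime$ to be a scalar in $[0,1]$. Writing $e=\Pi_{k\to\omega}e_k$ with each $e_k\in D_{n_k}$ a projection, the commutation $[e,\Theta(g_i)]=0$ in tracial $2$-norm translates (using that $p_k^i$ is a permutation) to $d_H(e_k,p_k^ie_k(p_k^i)^*)\to 0$ along $\omega$ for each of the finitely many generators $g_1,\ldots,g_s$ witnessing the expander property. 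For each $k$ set $e_k'=e_k$ if $Tr(e_k)\leqslant 1/2$ and $e_k'=1-e_k$ otherwise; since symmetric difference is complement-invariant, the $\lambda$-expander condition applied to $e_k'$ yields $\lambda Tr(e_k')<\sum_{i=1}^s d_H(e_k',p_k^ie_k'(p_k^i)^*)\to 0$ along $\omega$. Splitting $\nz$ into $\{k:Tr(e_k)\leqslant 1/2\}$ and its complement, one piece lies in $\omega$, and on that piece either $Tr(e_k)\to 0$ or $Tr(1-e_k)\to 0$, forcing $Tr(e)\in\{0,1\}$.

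To finish, any cutting projection $p\in\cc(\Theta)$ now corresponds to a scalar $\lambda\cdot Id$ with $\lambda=Tr(p)$. The canonical cut $\tilde p=Id_{n_k}\otimes q_k\in D_{n_kr_k}$, where $q_k\in D_{r_k}$ is a rank-$m_k$ projection with $m_k/r_k\to\lambda$, also maps to $\lambda\cdot Id$ and has $\Theta_{\tilde p}$ equal to the amplification $\Theta\otimes Id_{m_k}$, so $[\Theta_{\tilde p}]=[\Theta]$. By injectivity of the bijection, $p$ and $\tilde p$ are equivalent in $\cc(\Theta)$ up to amplification, which preserves the $Sof(G,P^\omega)$-class, so $[\Theta_p]=[\Theta_{\tilde p}]=[\Theta]$, as required. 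The main technical care is in the ultrafilter step that promotes the pointwise dichotomy ``$Tr(e_k)$ or $Tr(1-e_k)$ tends to $0$'' into the global statement $Tr(e)\in\{0,1\}$; the rest is a direct translation through the bijection.
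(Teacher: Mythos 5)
Your characterization of extreme points via Lemma~2.12 of \cite{Pa2} and your ``core step'' are both correct: writing $e=\Pi_{k\to\omega}e_k$, translating commutation to $d_H(e_k,p_k^ie_k(p_k^i)^*)\to 0$ along $\omega$, applying the expander inequality after complementing so that $Tr(e_k')\leqslant 1/2$, and splitting $\nz$ along $\omega$ indeed forces $Tr(e)\in\{0,1\}$; the spectral-calculus step then correctly gives $(\Pi_{k\to\omega}D_{n_k})_+^1\cap\Theta'=[0,1]\cdot Id$, and hence $f_p=Tr(p)\cdot Id$ for every cutting projection $p$.

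The gap is in the final transfer, where you invoke ``injectivity of the bijection'' to conclude $p\simeq\tilde p$ in $\cc(\Theta)$ from $f_p=f_{\tilde p}$. The averaging map $p\mapsto f_p$ from $\cc(\Theta)$ to $(\Pi_{k\to\omega}D_{n_k})_+^1\cap\Theta'$ is \emph{not} injective on equivalence classes modulo amplification: for instance with $r_k=2$, $p=Id_{n_k}\otimes\delta_1$ and $q=e\otimes\delta_1+(1-e)\otimes\delta_2$ (with $e$ the projection onto the first half of $\{1,\ldots,n_k\}$) both map to $\frac12 Id$, yet $\|p_k-q_k\|_2^2=\frac12$ for every $k$, so no amplifications of $p$ and $q$ can coincide in the ultraproduct. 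The paper's proof of the proposition $\cc(\Theta)\simeq(\Pi_{k\to\omega}D_{n_k})_+^1\cap\Theta'$ only exhibits the map and a one-sided construction, and does not assert the injectivity your argument needs. Deducing $[\Theta_p]=[\Theta]$ from $f_p$ being scalar is therefore not automatic; the implication is exactly the statement being proved, so the argument as written is circular at this point.

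The fix is to apply the expander inequality to the individual blocks $p_k^i$ rather than only to the average $f_p$. Writing $p_k=\sum_{i=1}^{r_k}p_k^i\otimes\delta_i$, commutation gives $\frac1{r_k}\sum_{i,g}d_H(p_k^i,\Theta(g)_kp_k^i\Theta(g)_k^*)\to_\omega 0$, and the expander property applied to each $p_k^i$ (after complementing) yields $\lambda\min\{Tr(p_k^i),1-Tr(p_k^i)\}\leqslant\sum_gd_H(p_k^i,\Theta(g)_kp_k^i\Theta(g)_k^*)$. Summing and averaging over $i$ gives $\frac1{r_k}\sum_i\min\{Tr(p_k^i),1-Tr(p_k^i)\}\to_\omega 0$, and this last quantity is precisely $\|p_k-\tilde p_k\|_2^2$, where $\tilde p_k=\sum_{\{i:Tr(p_k^i)>1/2\}}Id_{n_k}\otimes\delta_i$. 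Thus $p=\tilde p$ in $\Pi_{k\to\omega}D_{n_kr_k}$, so $[\Theta_p]=[\Theta_{\tilde p}]$; and since $\tilde p$ has the form $Id_{n_k}\otimes q_k$, the cut $\Theta_{\tilde p}$ is an amplification of $\Theta$ and $[\Theta_{\tilde p}]=[\Theta]$. Note that the relevant closeness is in $\|\cdot\|_2$ on $\Pi_{k\to\omega}D_{n_kr_k}$, not merely equality of the averages $f_p$, and the averaging map discards exactly the per-block information that the expander property controls.
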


We constructed uncountably many extreme sofic representations, in the same sequence of dimensions, that are not conjugated. All that we have to do is to show that these sofic representations don't have amplifications that are conjugated. Getting rid of an amplification is no easy feat, and the problem deserve its own section.

\section{Representations with conjugated amplifications}

\begin{op}\label{open problem}
Let $\Theta_1,\Theta_2:\ff_2\to\Pi_{k\to\omega}P_{n_k}$ be two sofic representations such that $[\Theta_1]=[\Theta_2]$, i.e. there exists a sequence $(r_k)_k\in\nz^*$ and $p\in\Pi_{k\to\omega}P_{n_kr_k}$ such that $p(\Theta_1\otimes Id_{r_k})p^*=\Theta_2\otimes Id_{r_k}$. Then there exists $q\in\Pi_{k\to\omega}P_{n_k}$ such that $q\Theta_1q^*=\Theta_2$.
\end{op}

We suspect that this last statement is true, but it eluded our attempts. Let's see some particular examples where this is true. First of all, a similar statements holds for unitaries.

\begin{te}\cite[Theorem 6.4]{Ak1}
Let $\Theta_1,\Theta_2:G\to\Pi_{k\to\omega}\unit(n_k)$ be two morphisms such that there exists a sequence $(r_k)_k$ and $u\in\Pi_{k\to\omega}\unit(n_kr_k)$ such that $u(\Theta_1\otimes Id_{r_k})u^{-1}=\Theta_2\otimes Id_{r_k}$. Then there exists $v\in\Pi_{k\to\omega}\unit(n_k)$ such that $v\Theta_1v^{-1}=\Theta_2$.
\end{te}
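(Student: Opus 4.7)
The plan is to reduce the amplification-level conjugacy to an abstract trace-preserving $*$-isomorphism between the generated von Neumann algebras, and then upgrade this to a spatial implementation by a unitary in the tracial ultraproduct $M := \Pi_{k\to\omega}M_{n_k}$.

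First, I would observe that the normalised trace $\tau$ on $M$ and its counterpart $\tau_r$ on $M_r := \Pi_{k\to\omega}M_{n_kr_k}$ satisfy $\tau_r(x \otimes Id_{r_k}) = \tau(x)$ for every $x \in M$. Because $u \in \Pi_{k\to\omega}\unit(n_kr_k)$ conjugates $\Theta_1\otimes Id_{r_k}$ to $\Theta_2\otimes Id_{r_k}$ inside $M_r$, we immediately deduce
\[\tau(\Theta_1(g_{i_1})\cdots\Theta_1(g_{i_m})) = \tau(\Theta_2(g_{i_1})\cdots\Theta_2(g_{i_m}))\]
for every word in the generators of $G$. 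Hence $\Theta_1(g)\mapsto\Theta_2(g)$ extends to a trace-preserving $*$-isomorphism $\pi\colon\Theta_1(G)''\to\Theta_2(G)''$ living entirely inside $M$.

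The crux of the proof is the promotion of $\pi$ to an inner conjugation of $M$, and this is where I expect the real work to lie. Since $G$ is countable, the subalgebras $\Theta_i(G)''$ are separable tracial von Neumann algebras, and $M$ is $\omega$-homogeneous: any two trace-preserving embeddings of a separable tracial vN algebra into $M$ are unitarily conjugate in $M$. Concretely, one fixes a $\|\cdot\|_2$-dense sequence $\{a_n\}$ in $\Theta_1(G)''$ and inductively builds unitaries $v^{(m)}\in M$ with $\|v^{(m)}a_j(v^{(m)})^*-\pi(a_j)\|_2 < 2^{-m}$ for $j\leqslant m$ and $\|v^{(m+1)}-v^{(m)}\|_2 < 2^{-m}$. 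The existence of each $v^{(m)}$ reduces, via the matching moments established above, to a finite-dimensional unitary conjugation that can be solved inside $M$ by a standard matricial approximation argument. The $\|\cdot\|_2$-limit $v\in M$ then implements $\pi$. Packaging this back-and-forth rigorously in the ultraproduct setting is the main obstacle.

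Finally, I would observe that every unitary in $M$ is automatically represented by an ultraproduct of unitaries. Lifting $v$ to a bounded sequence $(x_k)$ with $x_k\in M_{n_k}$ and polar-decomposing $x_k = v_k|x_k|$ with $v_k\in\unit(n_k)$ (completing any partial isometry to a genuine unitary at the matricial level), the identity $v^*v=1$ in $M$ forces $\||x_k|-1\|_2\to 0$ along $\omega$, so that $v = \Pi_{k\to\omega}v_k$ belongs to $\Pi_{k\to\omega}\unit(n_k)$ and gives $v\Theta_1v^{-1}=\Theta_2$ pointwise. Once the homogeneity step in $M$ is secured, the transition from an $M$-unitary to an ultraproduct of matricial unitaries is routine.
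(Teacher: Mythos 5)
The crux of your argument — the claim that the tracial ultraproduct $M=\Pi_{k\to\omega}M_{n_k}$ is ``$\omega$-homogeneous'' in the sense that any two trace-preserving embeddings of a separable tracial von Neumann algebra into $M$ are unitarily conjugate in $M$ — is false, and this is precisely the heart of the matter. If that homogeneity held, then $Hom(N,R^\omega)$ and $Sof(G,P^\omega)$ would consist of a single point for \emph{every} $N$ and $G$, contradicting Jung's theorem \cite{Ju} (and Elek--Szabo \cite{El-Sz}) quoted in the introduction, which show that these spaces are singletons \emph{iff} the object is amenable. Concretely, your steps up to the construction of the trace-preserving $*$-isomorphism $\pi\colon\Theta_1(G)''\to\Theta_2(G)''$ are fine, but they only used that $\Theta_1$ and $\Theta_2$ have matching moments, which is a far weaker hypothesis than conjugacy of amplifications. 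Matching moments never implies unitary conjugacy in $M$; two distinct extreme points of $Sof(\ff_2,P^\omega)$ (of which this paper produces uncountably many) have the same moments, being both sofic approximations of the regular representation, yet are not conjugate.

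The failure also propagates into the inductive step you propose: from approximately matching moments of finitely many words at level $k$ one cannot produce a matrix $v^{(m)}_k$ approximately conjugating the two tuples, because a pair of approximate microstates of $\ff_2$ in $M_{n_k}$ with nearly equal word-traces need not be nearly conjugate. A correct proof must genuinely exploit the conjugating unitary $u\in\Pi_{k\to\omega}\unit(n_kr_k)$ at the amplified level, not merely the trace identities it implies. Atkinson's argument (and the expander-permutation analogue proved in Section~6 of this paper) works by decomposing $u$ into $r_k\times r_k$ blocks of $n_k\times n_k$ matrices, noting each block is an intertwiner between $\Theta_1$ and $\Theta_2$, selecting a suitably large block, and then upgrading the resulting partial isometry intertwiner to a unitary. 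That block-by-block analysis is the piece your proposal is missing and cannot be replaced by an appeal to homogeneity.
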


Atkinson uses embeddings in type $II_1$ factors, as opposed to matrix algebras as we do here. However the proof, and the difficulty of the problem, are the same.

A first case we can solve for permutations is the one of a bounded sequence $(r_k)_k$.

\begin{p}\label{bounded amplification}
Let $\Theta_1,\Theta_2:\ff_2\to\Pi_{k\to\omega}P_{n_k}$ be two sofic representations such that there exists an $r\in\nz^*$ and $p\in\Pi_{k\to\omega}P_{n_kr}$ such that $p(\Theta_1\otimes Id_r)p^*=\Theta_2\otimes Id_r$. Then there exists $q\in\Pi_{k\to\omega}P_{n_k}$ such that $q\Theta_1q^*=\Theta_2$.
\end{p}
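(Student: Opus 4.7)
My plan is to exploit that $r$ is a fixed integer by decomposing $p$ into matrix blocks and recombining them combinatorially. Using the identification $\Pi_{k\to\omega}M_{n_kr}=\Pi_{k\to\omega}M_{n_k}\otimes M_r$, write $p=\sum_{i,j=1}^r A_{ij}\otimes E_{ij}$ with $A_{ij}\in\Pi_{k\to\omega}M_{n_k}$. Because $p$ is a permutation, each block $A_{ij}$ is a $\{0,1\}$-matrix, that is, a partial permutation with column support $C_{ij}$ and row support $R_{ij}$. The intertwining $p(\Theta_1\otimes Id_r)=(\Theta_2\otimes Id_r)p$ yields the blockwise relation $A_{ij}\Theta_1(g)=\Theta_2(g)A_{ij}$ for every $g\in\ff_2$, and the unitarity $pp^*=p^*p=Id$ translates to $\sum_j A_{ij}A_{i'j}^*=\delta_{ii'}Id$ and $\sum_i A_{ij}^*A_{ij'}=\delta_{jj'}Id$. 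In particular, for each fixed $i$ the row supports $R_{ij}$ partition $[n_k]$ as $j$ varies, and for each fixed $j$ the column supports $C_{ij}$ partition $[n_k]$ as $i$ varies.

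From the intertwining one checks that each $C_{ij}$ is $\Theta_1$-invariant and each $R_{ij}$ is $\Theta_2$-invariant, and that $A_{ij}\colon C_{ij}\to R_{ij}$ is an $\ff_2$-equivariant bijection. In the exact, coordinatewise case, I now fix an orbit-isomorphism type $T$ and form the bipartite multigraph whose two vertex sets are the $\Theta_1$- and $\Theta_2$-orbits of type $T$, with one edge from an orbit $O$ on the left to an orbit $O'$ on the right for each pair $(i,j)$ such that $O\subset C_{ij}$ and $A_{ij}(O)=O'$. The partition identities force this multigraph to be $r$-regular on both sides: a left vertex $O$ sits in exactly one $C_{ij}$ for each $j\in[r]$, and a right vertex $O'$ sits in exactly one $R_{ij}$ for each $i\in[r]$. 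By K\"onig's theorem every regular bipartite multigraph admits a perfect matching. Pasting the pieces $A_{ij}|_O$ orbit-by-orbit along such a matching produces $q\in P_{n_k}$ with $q\Theta_1(g)q^{-1}=\Theta_2(g)$ for all $g$.

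The real work is to promote this combinatorial argument to the ultralimit setting, since the decomposition, intertwining, orthogonality and orbit-equivariance become exact only in the $\omega$-limit. In each coordinate $k$ one has only approximate versions: the $C_{ij}^{(k)}$ and $R_{ij}^{(k)}$ are approximately invariant, the partition identities carry a Hamming defect, and the bipartite multigraph of approximate orbits is only approximately $r$-regular. I would prove a quantitative stability version of K\"onig's theorem --- an approximately $r$-regular bipartite multigraph admits a matching covering all but an $\varepsilon$-fraction of its vertices, with $\varepsilon$ controlled by the regularity defect --- and combine this with a diagonal argument over an exhausting sequence of finite subsets of $\ff_2$ and shrinking approximation scales to splice the coordinatewise $q_k$'s into a single $q\in\Pi_{k\to\omega}P_{n_k}$ conjugating $\Theta_1$ to $\Theta_2$. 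I expect this ultralimit passage, together with the careful handling of approximate orbit-types, to be the main obstacle; the boundedness of $r$ is essential throughout, since it keeps both the number of blocks and the regularity degree uniform in $k$ and lets one replace the delicate infinite-amplification problem of the preceding Open Problem by a uniformly controlled finite matching.
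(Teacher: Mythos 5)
Your exact-case analysis is correct and clean: the block decomposition $p=\sum_{i,j}A_{ij}\otimes E_{ij}$, the blockwise intertwining, the partial-permutation orthogonality relations, the invariance of the supports $C_{ij},R_{ij}$, and the $r$-regularity of the resulting bipartite multigraph of orbits all hold, and K\"onig's theorem then gives a genuine conjugacy. This is essentially the finite Krull--Schmidt argument that amplified isomorphism of finite $G$-sets implies isomorphism, and the $r$-regular bipartite structure you isolate is exactly the ``doubly stochastic of multiplicity $r$'' structure that underlies the paper's argument. However, your plan for passing to the ultralimit has a genuine gap. You propose to work coordinatewise with a ``bipartite multigraph of approximate orbits'' and a quantitative defect-K\"onig theorem. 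The defect-Hall step is standard, but the underlying combinatorial object is not well posed: for sofic approximations of $\ff_2$ (a non-amenable group), orbits of $\langle a_{n_k},c_k\rangle$ are typically giant components of size comparable to $n_k$, and the orbit partition is wildly unstable under small Hamming perturbations of the generating permutations. A tiny Hamming defect in the equivariance of $A_{ij}^{(k)}$ does not give you ``approximate orbits'' with ``approximate isomorphism types'' --- it can rewire the orbit structure wholesale, so there is no coherent multigraph to feed into a stability version of K\"onig.

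The paper circumvents exactly this obstacle by never going coordinatewise: it works in the exact ultraproduct, where $p$ induces a doubly stochastic equivalence (DSE) of multiplicity $r$ on the Loeb space $\Pi_{k\to\omega}D_{n_k}$, and then invokes Theorem 3.8 of \cite{Pa-Ra} --- a measure-theoretic generalisation of the Birkhoff--von Neumann theorem --- which decomposes this DSE and extracts $q_\varepsilon\in\Pi_{k\to\omega}P_{n_k}$ with $d(q_\varepsilon\Theta_1q_\varepsilon^*,\Theta_2)<\varepsilon$; a final diagonal argument removes the $\varepsilon$. That Birkhoff-type theorem plays the role that K\"onig plays in your finite picture, but it is formulated for the Loeb space with its measure-preserving action and makes no appeal to orbit decompositions, which is precisely what makes it robust where your approximate-orbit scheme is not. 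If you want to salvage a coordinatewise route, you would need to replace ``approximate orbit-isomorphism types'' by something genuinely stable under Hamming perturbation (e.g.\ local Schreier-neighbourhood statistics), and it is far from clear that a matching argument on those data reconstructs a global equivariant bijection; that is, in effect, what the Pa--R\u adulescu machinery is engineered to do.
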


The proof is just a consequence of results in \cite{Pa-Ra}. We don't get into all details here, as we don't need this result, and it will take some time and space to familiarise the reader with the concepts of that article. In short, an element $p\in\Pi_{k\to\omega}P_{n_kr}$ will generate a DSE of multiplicity $r$ on the Loeb space $\Pi_{k\to\omega}D_{n_k}$. By Theorem 3.8 of \cite{Pa-Ra}, we can construct $q_\ve\in\Pi_{k\to\omega}P_{n_k}$ such that $d(q_\ve\Theta_1q_\ve^*,\Theta_2)<\ve$. The proof can be finished with a diagonal argument. 

We prove the result in the case of expander sofic representations, in order to finish the results in the previous section. The proof is based on results in Section 5.1 \cite{Pa3}. We first introduce some tools. 

\begin{de}
For $x,y\in M_n(\cz)$ define \emph{the Hamming distance} on matrices as:
\[d_H(x,y)=\frac1n|\{i:\exists j\ x(i,j)\neq y(i,j)\}|.\]
\end{de}

The formula counts the number of rows that are different in $x$ and $y$. If $x,y\in P_n$
then this is the usual Hamming distance on a symmetric group. We shall use this extended metric mainly on "pieces of permutations".

\begin{de}
A matrix $q\in M_n(\cz)$ is called a \emph{piece of permutation} if $q$ has only $0$ and $1$ entries and at most one entry of $1$ on each row and each column. Alternatively, $q=pa$ where $p\in P_n,$ and $a\in D_n$ is a projection.
\end{de}

\begin{p}
Let $x,y\in M_n(\cz)$ and $p\in P_n$. We still have bi-invariance:
\[d_H(x,y)=d_H(px,py)=d_H(xp,yp).\] If $p$ is a piece of permutation, then:
\[d_H(px,py)\leqslant d_H(x,y)\mbox{ and }d_H(xp,yp)\leqslant d_H(x,y).\]
\end{p}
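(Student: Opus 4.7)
The plan is to translate everything to statements about rows, since $d_H$ here counts the fraction of rows where two matrices differ. The proof will be a routine verification, organized into the four claims in the proposition: left and right invariance under a full permutation, and left and right monotonicity under a piece of permutation. The key observation is that left multiplication by a matrix of 0/1 type with at most one 1 per row and column rearranges rows (possibly killing some), while right multiplication only permutes entries within each row. I will set notation $\mathrm{Diff}(x,y)=\{i:\exists j\ x(i,j)\neq y(i,j)\}$, so that $d_H(x,y)=\tfrac1n|\mathrm{Diff}(x,y)|$.

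For $p\in P_n$ associated with the permutation $\sigma$, row $i$ of $px$ is row $\sigma^{-1}(i)$ of $x$, so $\mathrm{Diff}(px,py)=\sigma(\mathrm{Diff}(x,y))$, which has the same cardinality; this gives $d_H(px,py)=d_H(x,y)$. For right multiplication, $(xp)(i,j)=x(i,\sigma(j))$, so row $i$ of $xp$ is just a permutation of the entries of row $i$ of $x$, and hence $\mathrm{Diff}(xp,yp)=\mathrm{Diff}(x,y)$ on the nose.

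For $p$ a piece of permutation, write $S=\{i:\text{row }i\text{ of }p\text{ is nonzero}\}$ and let $\pi:S\to\{1,\ldots,n\}$ record the unique column with a $1$ in row $i\in S$; the at-most-one-per-column condition makes $\pi$ injective. Row $i$ of $px$ equals row $\pi(i)$ of $x$ for $i\in S$ and is $0$ for $i\notin S$, and similarly for $py$, so $\mathrm{Diff}(px,py)=\pi^{-1}(\mathrm{Diff}(x,y)\cap \pi(S))$. Injectivity of $\pi$ yields $|\mathrm{Diff}(px,py)|\leqslant|\mathrm{Diff}(x,y)|$. For the right-sided inequality, note that $(xp)(i,j)$ depends only on row $i$ of $x$ (namely it equals $x(i,\pi^{-1}(j))$ if $j\in\pi(S)$ and $0$ otherwise), so if row $i$ of $x$ agrees with row $i$ of $y$ then row $i$ of $xp$ agrees with row $i$ of $yp$; equivalently, $\mathrm{Diff}(xp,yp)\subseteq\mathrm{Diff}(x,y)$.

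There is no real obstacle here; the only thing to be careful about is that in the piece-of-permutation cases the bi-invariance degrades to an inequality precisely because $\pi$ is only a partial function, whereas in the $P_n$ case both $\sigma$ and its inverse are total bijections. Once this bookkeeping is set up, each of the four estimates is a single-line verification at the level of rows.
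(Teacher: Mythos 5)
Your proof is correct. The paper states this proposition without supplying a proof, treating it as routine bookkeeping, and your row-level analysis is exactly the right way to fill it in: since $d_H$ here counts the fraction of differing rows, everything reduces to how multiplication on either side shuffles, permutes within, or kills rows, as you spell out. One small remark worth keeping in mind: your argument for $d_H(xp,yp)\leqslant d_H(x,y)$ in fact works for arbitrary $p\in M_n(\cz)$, because $(xp)(i,\cdot)$ always depends only on $x(i,\cdot)$, with no structural hypothesis on $p$ needed; the piece-of-permutation condition is genuinely used only on the left side, where the at-most-one-per-column property gives the injectivity of your partial map $\pi$, without which the left inequality can fail (a $p$ with two ones in one column duplicates the contribution of a single bad row of $x$).
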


The main technical part of the proof is contained in the next lemma.

\begin{p}
	Let $\varepsilon>0$, $n,r\in\nz^*$ and $x_1,\ldots, x_k, y_1\ldots, y_k\in P_n$ be such that $\{y_1\ldots, y_k\}$ is a $\lambda$-expander. Assume that there exists $u\in P_{nr}$ such that 
	$d_H(u(x_t \otimes 1_r),(y_t \otimes 1_r)u)<\varepsilon$ for all $t=1,\ldots,k$.  Then, there exists $v\in P_n$ such that $d_H(vx_t, y_t v)< 20k^2\varepsilon/\lambda$ for all $t=1,\ldots,k$.
\end{p}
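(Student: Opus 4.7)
My plan is to extract $v$ from a single well-chosen ``column'' of $u^{-1}$, using the expander hypothesis to ensure that this column is essentially a bijection on $n$ points. First, inverting the hypothesis gives $d_H(u^{-1}(y_t\otimes 1_r),(x_t\otimes 1_r)u^{-1})<\varepsilon$ for every $t$, which places the expander group $\{y_t\}$ on the domain side. Writing $u^{-1}(i,j)=(\rho_j(i),\psi_j(i))$ (viewing $u^{-1}$ as a bijection of $\{1,\dots,n\}\times\{1,\dots,r\}$), the approximate intertwining translates, on most $(i,j)$, into the two coordinate equalities $\rho_j(y_t(i))=x_t(\rho_j(i))$ and $\psi_j(y_t(i))=\psi_j(i)$. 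Averaging the total failure rates over $j$ produces some $j_0$ with $\sum_t\delta_t<k\varepsilon$; I set $\rho:=\rho_{j_0}$, $\psi:=\psi_{j_0}$.

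Next I will argue that $\psi$ is approximately constant using the expander. The partition $\{1,\dots,n\}=\bigsqcup_b\psi^{-1}(b)$ is approximately $y_t$-invariant, and the identity $\sum_b|\psi^{-1}(b)\triangle y_t(\psi^{-1}(b))|=2\,|\{i:\psi(y_t(i))\neq\psi(i)\}|\leqslant 2\delta_t n$ yields $\sum_b\sum_t d_H(1_{\psi^{-1}(b)},y_t 1_{\psi^{-1}(b)}y_t^{*})\leqslant 2k\varepsilon$. Applying the $\lambda$-expander inequality to each $\psi^{-1}(b)$ individually and then summing in $b$ produces
\[
\lambda\sum_b\min(\alpha_b,1-\alpha_b)\leqslant 2k\varepsilon,\qquad \alpha_b:=|\psi^{-1}(b)|/n.
\]
Because $\sum_b\alpha_b=1$, the left-hand side equals $1$ whenever every $\alpha_b\leqslant\tfrac12$; assuming $\varepsilon<\lambda/(2k)$ (otherwise the target bound $20k^2\varepsilon/\lambda$ is already $\geqslant 1$ and the conclusion is trivial), this forces a dominant $\alpha_{b_0}>\tfrac12$, and the sum then collapses to $2(1-\alpha_{b_0})\leqslant 2k\varepsilon/\lambda$, giving $\alpha_{b_0}\geqslant 1-k\varepsilon/\lambda$.

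Finally, since $u^{-1}$ maps $\psi^{-1}(b_0)\times\{j_0\}$ injectively into $\{1,\dots,n\}\times\{b_0\}$, the restriction $\rho|_{\psi^{-1}(b_0)}$ is an injection into $\{1,\dots,n\}$; I will extend it arbitrarily to a permutation $w\in P_n$ and set $v:=w^{-1}$. For $i\in\psi^{-1}(b_0)\cap y_t^{-1}(\psi^{-1}(b_0))$ outside the $t$-th failure set, $w(y_t(i))=\rho(y_t(i))=x_t(\rho(i))=x_t(w(i))$, so $wy_t=x_tw$ off a set of size at most $(2k\varepsilon/\lambda+k\varepsilon)n$ by a union bound. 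Bi-invariance of $d_H$ then gives $d_H(vx_t,y_tv)=d_H(wy_t,x_tw)\leqslant 3k\varepsilon/\lambda<20k^2\varepsilon/\lambda$ for $k\geqslant 1$. The delicate point, and where constants could easily blow up, is the summation over $b$ in the preceding paragraph: each individual expander inequality is consistent with a partition into $r$ arbitrarily small pieces, so no single $\alpha_b$ can be forced to be large by one application of the expander; exploiting $\sum_b\alpha_b=1$ only after summing in $b$ is what removes all dependence on $r$ from the final estimate.
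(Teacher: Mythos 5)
Your proof is correct, and it reaches the same conclusion by a route that runs parallel to the paper's but differs in two technical respects that are worth recording. The overall plan is identical: locate one good column of $u$ (the paper fixes an index $i\in\{1,\dots,r\}$; you invert and fix $j_0$), recognize that the induced partition of $\{1,\dots,n\}$ into fibres is approximately $y_t$-invariant, and use the expander to force one fibre to carry almost all the mass, from which $v$ is read off. Where you diverge: (i) you work only with $u^{-1}$ and the two coordinate functions $\rho,\psi$ of the column, whereas the paper tracks both $u$ and $u^*$ simultaneously (the quantities $\varepsilon^t_{i,j}$ and $\delta^t_{j,i}$) because it passes through the block products $p_j=u(i,j)u^*(j,i)$; and (ii) to promote a single dominant fibre, the paper applies the expander to every partial sum $p_S$ and argues that $Tr(p_S)\leqslant\tfrac12$ forces $Tr(p_S)<8k^2\varepsilon/\lambda$, while you apply it atom-by-atom to each $\psi^{-1}(b)$ and then sum in $b$, exploiting $\sum_b\alpha_b=1$ via the identity $\sum_b\lvert\psi^{-1}(b)\triangle y_t\psi^{-1}(b)\rvert=2\lvert\{i:\psi(y_ti)\neq\psi(i)\}\rvert$. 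Both devices remove the $r$-dependence, but yours does so with less bookkeeping and yields the sharper bound $2k\varepsilon/\lambda+k\varepsilon$ in place of $20k^2\varepsilon/\lambda$, i.e.\ a factor of order $k$ better. Your closing remark pinpoints exactly the issue the paper's ``subsets $p_S$'' step is designed to handle, and your treatment of it (dispose of the degenerate case $\varepsilon\geqslant\lambda/(2k)$ first, then extract the unique $\alpha_{b_0}>\tfrac12$) is complete.
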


\begin{proof}
The key point here is that the term $20k^2\ve/\lambda$ does not contain $r$. Otherwise, the problem would be solved using Propostion \ref{bounded amplification}.

By bi-invariance, we also have
	$d_H((x_t\otimes 1_r)u^*,u^*(y_t\otimes 1_r))<\varepsilon$ for all $t=1,\ldots,k$. 
	As $M_{nr}(\mathbb{C}) \simeq  M_r(\mathbb{C})\otimes M_n(\mathbb{C})$, elements in $M_{nr}(\mathbb{C})$ can be viewed as functions from $\{1,2,\ldots,r \}^2$
	to $M_n(\mathbb{C})$. Then $[u(x_t\otimes 1_r)](i,j)=u(i,j)x_t$ and $[(y_t\otimes 1_r)y](i,j)=y_t u(i,j)$. Also $u(i,j)$ is a piece of permutation for any $i,j$. If $A, B \in P_{nr}$ then: 
	$$2d_H(A,B)\geq \frac{1}{r}\sum_{i,j=1}^r d_H(A(i,j), B(i,j)).$$

	Let $\varepsilon^t_{i,j}=d_H(u(i,j)x_t,y_t u(i,j))$ and 
	 $\delta^t_{i,j}=d_H(x_tu^*(i,j), u^*(i,j)y_t)$. Then, for all  $t=1,\ldots,k$ we have
	\begin{align*}
	\frac{1}{r}\sum_{i,j=1}^r\varepsilon^t_{i,j} &\leq 2d_H(u(x_t\otimes 1_r), (y_t\otimes 1_r)u)<2\varepsilon, \\
	\frac{1}{r}\sum_{i,j=1}^r\delta^t_{i,j} &\leq 2d_H((x_t\otimes 1_r)u^*, u^*(y_t\otimes 1_r))<2\varepsilon. 
	\end{align*}
	From these inequalities we can deduce the existence of an $i\in \{1,2,\ldots, r\}$ such that  
	$$\sum_{j=1}^r \varepsilon^t_{i,j}< 4k\varepsilon,  \ \  \sum_{j=1}^r \delta^t_{j,i}< 4k\varepsilon$$
	for all  $t=1,\ldots,k.$ From now on $i$ is fixed with this property. For $t=1,\ldots, k$ we have
	\begin{align*}
	d_H(u(i,j)u^*(j,i)y_t, & y_tu(i,j)u^*(j,i)) \\
	&\leq d_H(u(i,j)u^*(j,i)y_t, u(i,j)x_tu^*(j,i)) +d_H(u(i,j)x_tu^*(j,i), y_tu(i,j)u^*(j,i))\\
	&\leq d_H(u^*(j,i)y_t, x_tu^*(j,i))+d_H(u(i,j)x_t, y_tu(i,j))=\delta^t_{j,i}+\varepsilon^t_{i,j}
	\end{align*}
	For $j=1,\ldots, r$, let $p_j=u(i,j)u^*(j,i)$. As $u(i,j)$ is a piece of permutation, $p_j$ is a projection in $D_n$. Moreover $\sum_{j=1}^rp_j=Id_n$, so these are disjoint projections. Also: $$d_H(p_j, y_tp_j y^*_t)=d_H(p_j y_t, y_tp_j )\leq \delta^t_{j,i}+\varepsilon^t_{i,j}.$$
	
	For $S\subset \{1,2,\ldots,r\}$, define $p_{S}=\sum_{j\in S}p_j$. Notice that
	$d_H(p_{S}, y_tp_{S}y_t^*)\leq \sum_{j\in S}d_H(p_j, y_tp_j y_t^*)$ and by the above inequalities, we get, for any subset $S$ and for any $t$
	$$d_H(p_{S}, y_t p_{S}y_t^*)\leq \sum_{j\in S} (\delta^t_{j,i}+\varepsilon^t_{i,j})<8k\varepsilon.$$

	By the expander hypothesis, we have
	$\lambda Tr(p_S)\leq\sum_{t=1}^k d_H(p_{S}, y_tp_{S}y_t^*)$. As such, if $Tr(p_S)\leq 1/2$, then actually $Tr(p_S)\leq 8k^2\varepsilon/\lambda$. As $\sum_{i=1}^r p_j= Id_n$, it is easy to see that there exists $j$ such that $Tr(p_j)\geq 1/2$. Let $S= \{1,2,\ldots,r\}\setminus{j}$. Then $Tr(p_S)< 8k^2\varepsilon/\lambda$, so $Tr(p_j)\geq 1-8k^2\varepsilon/\lambda$.
This means that $u(i,j)$ is ``almost" a permutation matrix. 	

	Let $v\in P_n$ be such that $d_H(v, u(i,j))<8k^2\varepsilon/\lambda$. For all $t=1,\ldots, r$ we get:  
	\begin{align*}
	d_H(vx_t,y_tv) & \leq d_H(vx_t,u(i,j)x_t)+d_H(u(i,j)x_t,y_tu(i,j))+d_H(y_tu(i,j),y_tv)\\
	&<8k^2\varepsilon/\lambda + 4k\varepsilon +8k^2\varepsilon/\lambda  < 20k^2\varepsilon/\lambda.
	\end{align*}
	
\end{proof}

\begin{te}
Let $\Theta_1,\Theta_2:\ff_2\to\Pi_{k\to\omega}P_{n_k}$ be two sofic representations such that $[\Theta_1]=[\Theta_2]$, i.e. there exists a sequence $(r_k)_k\in\nz^*$ and $p\in\Pi_{k\to\omega}P_{n_kr_k}$ such that $p(\Theta_1\otimes Id_{r_k})p^*=\Theta_2\otimes Id_{r_k}$. Assume that $\Theta_1$ is an expander. Then there exists $q\in\Pi_{k\to\omega}P_{n_k}$ such that $q\Theta_1q^*=\Theta_2$.
\end{te}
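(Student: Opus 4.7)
Proof proposal: The plan is to lift the preceding proposition, whose key feature is that its error bound $20k^{2}\ve/\lambda$ is independent of the amplification parameter $r$, from the finite level to the ultraproduct, thereby producing a conjugating permutation at dimension $n_k$ rather than $n_kr_k$.

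First I would fix $g_1,\ldots,g_s\in\ff_2$ witnessing the expander property of $\Theta_1$, so that, passing to representatives, $(\Theta_1(g_t)_k)_{t=1}^{s}$ is a $\lambda$-expander in $P_{n_k}$ for $\omega$-almost every $k$. I then enlarge this tuple by adjoining the two free generators $x_1,x_2\in\ff_2$: since appending permutations only increases the right-hand side of the expander inequality, the resulting tuple $h_1,\ldots,h_m\subset\ff_2$, which contains $\{x_1,x_2\}$, still has $(\Theta_1(h_t)_k)_{t=1}^{m}$ a $\lambda$-expander for $\omega$-almost every $k$.

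Next, I would rewrite the hypothesis as $p^{*}(\Theta_2\otimes Id_{r_k})=(\Theta_1\otimes Id_{r_k})p^{*}$. Choosing representatives $p_k\in P_{n_kr_k}$, $x_k^{t}:=\Theta_2(h_t)_k$, $y_k^{t}:=\Theta_1(h_t)_k$ in $P_{n_k}$, this translates into the existence of a sequence $\ve_k\to 0$ along $\omega$ with
\[ d_H\bigl(p_k^{*}(x_k^{t}\otimes 1_{r_k}),\ (y_k^{t}\otimes 1_{r_k})\,p_k^{*}\bigr)<\ve_k \]
for $\omega$-almost every $k$ and every $t=1,\ldots,m$. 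I then apply the preceding proposition at each such $k$ with $u=p_k^{*}$ and $r=r_k$; the expander hypothesis is precisely on the $y_k^{t}$'s, as required. This yields $v_k\in P_{n_k}$ with $d_H(v_k x_k^{t},\,y_k^{t}v_k)<20m^{2}\ve_k/\lambda$ for every $t$, and this bound tends to $0$ along $\omega$.

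Setting $v:=\Pi_{k\to\omega}v_k\in\Pi_{k\to\omega}P_{n_k}$ gives $v\,\Theta_2(h_t)\,v^{*}=\Theta_1(h_t)$ for every $t$, and in particular on the free generators $x_1,x_2$. Since $\Theta_1,\Theta_2$ are homomorphisms and $\{x_1,x_2\}$ generates $\ff_2$, the identity propagates to all of $\ff_2$, giving $v\,\Theta_2\,v^{*}=\Theta_1$. Taking $q:=v^{*}$ then yields $q\,\Theta_1\,q^{*}=\Theta_2$. The main technical content, namely eliminating the amplification factor from the dimension of the conjugator, is already carried by the expander combinatorics in the preceding proposition; what remains here is essentially bookkeeping: matching the direction of conjugation so the expander falls on the correct side, enlarging the generating set so it actually generates $\ff_2$, and passing to the ultraproduct limit.
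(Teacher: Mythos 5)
Your proof takes essentially the same route as the paper: fix representatives of $p$, apply the $r$-independent expander proposition at each level $k$ with $u = p_k^*$ (so that the expander falls on the $y$-side), and pass to the ultralimit to obtain the conjugator $q=v^*$. Your extra step of adjoining the free generators $x_1,x_2$ to the expander-witnessing tuple before applying the proposition plugs a small implicit assumption in the paper's argument, which tacitly takes the expander to be witnessed by the two generators themselves; since appending permutations only increases the sum on the right-hand side of the expander inequality, this enlargement is harmless and makes the final ``propagate to all of $\ff_2$'' step fully justified.
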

\begin{proof}
Let $\Theta_1(\gamma_i)=\Pi_{k\to\omega}x_i^k$, $i=1,2$ be such that $(x_1^k,x_2^k)$ is a $\lambda$-expander. Let $\Theta_2(\gamma_i)=\Pi_{k\to\omega}y_i^k$. Also choose $p_k$ such that $p=\Pi_{k\to\omega}p_k$. Apply the previous Proposition to $y_1^k,y_2^k,x_1^k,x_2^k$ and $p_k$, in order to get $q_k\in P_{n_k}$ with the stated property. Then $\Pi_{k\to\omega}q_k$ conjugates $\Theta_1$ to $\Theta_2$.
\end{proof}

We managed to prove this result because, when one conjugates an amplified expander, it has to be conjugated in bulks. An expander representation cannot be cut into pieces, and as such, inside the conjugating $p\in P_{nr}$, we found our $q\in P_n$ as one of the $r^2$ squares.

\section{Another strange sofic representation}\label{another}


In this section, we manufacture a sofic representation of $\ff_2$ that has trivial commutant in $\Pi_{k\to\omega}P_{n_k}$, but has uncountably many cuts. This suggests that this sofic representation has uncountably many, non-isomorphic cuts.

\subsection{Trivial commutant} Sofic representations with trivial commutant in permutations have been firstly constructed by Arzhantseva and second author in \cite{Ar-Pa}, Theorem 3.2. For the scope of the present paper we have to rework the proof, in order to get a better convergence for the counting argument used.  In \cite{Ar-Pa} it is shown that two permutations in $Sym(n)$ randomly chosen, have trivial commutant w.r.t. Hamming distance, with probability converging to $1$ as $n\to\infty$. The takeaway from this subsection is that the rate of this convergence is exponential. Recall that $a\in P_n$ is the matrix corresponding to the maximal cycle $i\to i+1$. We already used the estimation of number of permutations commuting with such a cycle.

\begin{p}\cite[Proposition 5.13]{Pa3}\label{P5.13}
Let $\ve>0$. The number of permutations $y\in P_n$ such that $d_{H}(ay,ya)<\ve$ is less than $n^{\lfloor n\ve\rfloor+1}$.
\end{p}

This time we also need an estimate for the number of permutations commuting with an arbitrary element.

\begin{p}\cite[Prop. 3.5]{Ar-Pa}\label{P:nr commuting}
Let $b\in P_n$ be such that $d_H(b, 1_n)>11\delta$. The number of permutations $c\in P_n$ such that $d_H(bc,cb)<\delta$ is less than $\frac{n!}{n^{4n\delta}}$, for large enough $n$.
\end{p}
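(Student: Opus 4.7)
The plan is to count the $c \in P_n$ with $d_H(bc, cb) < \delta$ by pulling back through the conjugation action. By bi-invariance of the Hamming distance, $d_H(bc, cb) = d_H(c^{-1}bc, b)$, so I will count $c$ satisfying $d_H(c^{-1}bc, b) < \delta$. For any fixed $b' \in P_n$, the fiber $\{c : c^{-1}bc = b'\}$ is either empty (when $b'$ is not conjugate to $b$) or a coset of the centralizer $Z(b) = \{z \in P_n : zb = bz\}$, hence has size exactly $|Z(b)|$. Combining this with Proposition \ref{P5.12}, which bounds the number of $b' \in P_n$ with $d_H(b', b) < \delta$ by $n^{\lfloor n\delta \rfloor}$, I obtain
\[\#\{c \in P_n : d_H(bc, cb) < \delta\} \;\leqslant\; |Z(b)| \cdot n^{\lfloor n\delta \rfloor},\]
reducing the task to proving $|Z(b)| \leqslant n! / n^{5n\delta}$ for $n$ large. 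This parallels the proof of Proposition \ref{P5.13}, where $|Z(a)| = n$ gives the bound $n \cdot n^{\lfloor n\varepsilon \rfloor}$ at once.

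For the centralizer estimate I invoke the classical cycle-type formula $|Z(b)| = \prod_{\ell \geqslant 1} \ell^{m_\ell} m_\ell!$, where $m_\ell$ counts the length-$\ell$ cycles of $b$. The hypothesis $d_H(b, 1_n) > 11\delta$ translates to $m_1 < (1-11\delta)n$, so at least $11\delta n$ points lie in cycles of length $\geqslant 2$. A two-step maximization, first placing all moved points into $2$-cycles (for fixed $m_1$), then pushing $m_1$ to its allowed maximum $(1-11\delta)n$, gives
\[|Z(b)| \;\leqslant\; \bigl((1-11\delta)n\bigr)! \cdot 2^{11\delta n /2} \cdot (11\delta n / 2)!\,.\]
A Stirling computation then shows $\log(n!) - \log|Z(b)| \geqslant \tfrac{11\delta}{2}\, n \log n - O(n) = 5.5\,\delta n \log n - O(n)$, and for $n$ large enough the $0.5\,\delta n \log n$ slack absorbs the $O(n)$ Stirling error, yielding the required $|Z(b)| \leqslant n!/n^{5n\delta}$. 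Multiplying by $n^{\lfloor n\delta \rfloor} \leqslant n^{n\delta}$ finishes the proof.

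I expect the main delicate point to be the two-step cycle-type maximization. The inner step is cleanest through the estimate $\log(\ell^{m_\ell} m_\ell!) = m_\ell \log(\ell m_\ell) - m_\ell + O(\log m_\ell)$: writing $y_\ell := \ell m_\ell$ (the number of points occupied by length-$\ell$ cycles), the contribution to $\log|Z(b)|$ rewrites as $(y_\ell / \ell)(\log y_\ell - 1) + O(\log m_\ell)$, and under the constraint $\sum_{\ell \geqslant 2} y_\ell = k$ this is maximized when all of $k$ sits at $\ell = 2$. The outer step is a direct comparison showing that the factor $m_1!$ strictly dominates the decrease of $2^{(n-m_1)/2}((n-m_1)/2)!$ as $m_1$ grows, so the boundary $m_1 = (1-11\delta)n$ is optimal. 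The constants $11$ and $4$ in the statement are calibrated precisely so that this headroom survives both Stirling errors; any looser hypothesis such as $d_H(b, 1_n) > 10\delta$ would fail at the final estimate.
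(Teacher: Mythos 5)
Your proof is correct, and it takes a genuinely different route from the paper's. The paper counts $C=\{c:d_H(bc,cb)<\delta\}$ directly: it introduces the sets $A_c=\{i:bc(i)=cb(i)\}$ and $B=\{i:b(i)\neq i\}$, observes $|A_c\cap B|>10\delta n$, and then builds $c$ value by value, noting that once $c(i)$ is chosen for $i\in A_c\cap B$ with $b(i)\neq i$, the value $c(b(i))=bc(i)$ is forced. This is elementary but requires tracking the dependence of $A_c$ on $c$, which is the fiddly part they handle with the worst-case ``first $\delta n$ indices miss $A_c$'' argument. You instead use bi-invariance to rewrite $d_H(bc,cb)=d_H(c^{-1}bc,b)$, fiber the count over the conjugates $b'$ near $b$ (each fiber a coset of $Z(b)$, contributing $|Z(b)|$), bound the number of nearby $b'$ by Proposition \ref{P5.12}, and reduce everything to the cycle-type formula $|Z(b)|=\prod_\ell \ell^{m_\ell}m_\ell!$. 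The algebraic reformulation removes the ``moving target'' issue the paper wrestles with and makes the role of Proposition \ref{P5.12} explicit, where the paper's proof does not invoke it. Both proofs ultimately trade the extra fixed-point defect $11\delta$ for the $4\delta$ in the exponent via the same $5.5\delta-\delta-0.5\delta$ bookkeeping.

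One small simplification to your centralizer estimate: the two-step maximization you flag as the delicate point is actually not needed. From $\ell^{m_\ell}m_\ell!\leqslant \ell^{m_\ell}m_\ell^{m_\ell}=(\ell m_\ell)^{m_\ell}\leqslant n^{m_\ell}$ one gets at once
\[
\log|Z(b)|\ \leqslant\ \Bigl(\sum_{\ell\geqslant 1}m_\ell\Bigr)\log n\ =\ \Bigl(m_1+\sum_{\ell\geqslant 2}m_\ell\Bigr)\log n\ \leqslant\ \Bigl(m_1+\tfrac{n-m_1}{2}\Bigr)\log n\ =\ \tfrac{n+m_1}{2}\log n,
\]
since each cycle of length $\geqslant 2$ covers at least two moved points. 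The coefficient $\tfrac{n+m_1}{2}$ is visibly increasing in $m_1$, so the constraint $m_1<(1-11\delta)n$ gives $\log|Z(b)|\leqslant(1-\tfrac{11\delta}{2})n\log n$ with no case analysis on cycle type and no Stirling. Combined with $\log n!\geqslant n\log n - n$ this yields $n!/|Z(b)|\geqslant n^{5.5\delta n}e^{-n}\geqslant n^{5\delta n}$ once $n^{0.5\delta n}\geqslant e^n$, i.e.\ once $\log n\geqslant 2/\delta$, and the rest of your argument goes through unchanged.
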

\begin{proof}
Define $C=\{c\in P_n: d_H(bc,cb)<\delta\}$. Choose $c\in C$. Consider the following subsets of $\{1,\ldots,n\}$: $A_c=\{i:bc(i)=cb(i)\}$ and $B=\{i:b(i)\neq i\}$. Then $|A_c|>(1-\delta)n$ and $|B|>11\delta n$. It follows that $|A_c\cap B|>(11\delta-\delta)\cdot n=10\delta\cdot n$.

Let $i\in A_c\cap B$. Then $c\big(b(i)\big)=bc(i)$ and $b(i)\neq i$. Hence, once the value of $c(i)$ is fixed, the value of $c$ on $b(i)$ must be $bc(i)$. Unfortunately, the set $A_c$ depends on $c$. This makes the counting argument a little more involved.

Let's recall how to count the number of permutations $p\in P_n$: $p(1)$ can take any of the $n$ values in the set $\{1,\ldots,n\}$; $p(2)$ can take any of the remaining $n-1$ values, and so on. Hence, the cardinality of $P_n$ is $n!$. We adapt this argument to count the number of permutations $c$ with the required properties. Without loss of generality, we can assume that $B=\{1,2,\ldots,|B|\}$. As before $c(1)$ can take $n$ values. If $1\in A_c$, a information that at the moment we don't have, than $c(b(1))$ is also set. Thus, the following value of $c$ to be decided ($c(2)$ if $b(1)\neq 2$, and $c(3)$ otherwise), has only $n-2$ options. If $1\notin A_c$, we continue our enumeration of elements in $B$ till $|B|$. In the worst scenario, the first $\delta\cdot n$ elements of $B$ will not be in $A_c$. After this, all remaining elements of $B$ are bound to also be in $A_c$.

Thus, denoting by $t=\lfloor\delta n\rfloor$ and $s=\lfloor (11\delta-\delta)n/2\rfloor=\lfloor 5\delta\cdot n\rfloor$, our estimation for the maximal number of elements in $C$ is: $$\underbrace{n(n-1)\ldots(n-t+1)}_\text{$t$ terms}\underbrace{(n-t)(n-t-2)\ldots(n-t-2s+2)}_\text{$s$ terms}(n-t-2s)(n-t-2s-1)\ldots1.$$ Hence:
\[|C|<\frac{n!}{(n-t-2s+1)^s}<\frac{n!}{[(1-11\delta)n]^{5\delta n-1}}.\]
We only need to show that $[(1-11\delta)n]^{5\delta n-1}>n^{4n\delta}$. Using the logarithm, this is equivalent to:
\[(5\delta n-1)\ln[(1-11\delta)n]>4n\delta\cdot\ln{n}.\]
We factor the two terms and compute the limit via L'Hospital's rule.
\begin{align*}
&\lim_{n\to\infty}\frac{(5\delta n-1)\ln[(1-11\delta)n]}{4n\delta\cdot\ln{n}}=\lim_n\frac{5\delta\cdot\ln[(1-11\delta)n]+(5\delta n-1)\cdot1/n}{4\delta\cdot\ln{n}+4n\delta\cdot 1/n}=\\
&\lim_n\frac{5\delta\cdot\ln[(1-11\delta)n]}{4\delta\cdot\ln{n}}=\frac54>1.
\end{align*}
\end{proof}

We continue our counting argument by introducing two sets of $n$-cycles with specific properties. Given  $\delta >0$, we define:
\[L_n^\delta=\{c\in P_n:\not\exists b\in P_n\mbox{ with } d_H(b,1_n)>11\delta, d_H(ab,ba)<\delta\mbox{ and }d_H(cb,bc)<\delta\}.\]

\begin{p}\cite[Prop. 3.6]{Ar-Pa}\label{P:Gndelta}
For a fixed $\delta>0$ and large enough $n\in\nz$, $$Card\ L_n^\delta>(1-n^{-2\delta n})\cdot(n!).$$ 
\end{p}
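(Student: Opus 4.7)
The plan is to estimate the complement $P_n \setminus L_n^\delta$ by a union bound, using Propositions \ref{P5.13} and \ref{P:nr commuting} as the two ingredients.

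First I would fix the perspective: a permutation $c$ fails to belong to $L_n^\delta$ precisely when there exists a witness $b \in P_n$ with $d_H(b,1_n) > 11\delta$, $d_H(ab,ba) < \delta$ and $d_H(cb,bc) < \delta$. So I would bound
\[
|P_n \setminus L_n^\delta| \leq \sum_{b \in B_n^\delta} \bigl|\{c \in P_n : d_H(cb,bc) < \delta\}\bigr|,
\]
where $B_n^\delta = \{b \in P_n : d_H(b,1_n) > 11\delta \text{ and } d_H(ab,ba) < \delta\}$.

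Next, I would bound the two pieces. The total number of $b \in P_n$ satisfying $d_H(ab,ba) < \delta$ is at most $n^{\lfloor n\delta \rfloor + 1} \leq n^{n\delta + 1}$ by Proposition \ref{P5.13}, so in particular $|B_n^\delta| \leq n^{n\delta + 1}$. For each $b \in B_n^\delta$, the condition $d_H(b,1_n) > 11\delta$ puts us exactly in the hypothesis of Proposition \ref{P:nr commuting}, so the number of $c \in P_n$ with $d_H(cb,bc) < \delta$ is at most $n!/n^{4n\delta}$ for large enough $n$. Multiplying,
\[
|P_n \setminus L_n^\delta| \leq n^{n\delta + 1} \cdot \frac{n!}{n^{4n\delta}} = \frac{n!}{n^{3n\delta - 1}}.
\]

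Finally, I would compare exponents: $\frac{n!}{n^{3n\delta - 1}} \leq n^{-2\delta n} \cdot n!$ is equivalent to $3n\delta - 1 \geq 2n\delta$, i.e.\ $n \geq 1/\delta$, which holds for all sufficiently large $n$. Dividing through gives the claimed lower bound on $|L_n^\delta|$.

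The proof is essentially a bookkeeping exercise once the two key counting propositions are in hand, and I do not anticipate a real obstacle; the only mild subtlety is to make sure the $b$-count from Proposition \ref{P5.13} (which ignores the $d_H(b,1_n) > 11\delta$ condition) is combined with the $c$-count from Proposition \ref{P:nr commuting} in the correct order, so that the $11\delta$ hypothesis on $b$ is available when we invoke the latter. Checking the final inequality $3n\delta - 1 \geq 2n\delta$ confirms that the exponential rate $n^{-2\delta n}$ stated in the proposition is exactly what one gets from combining these two estimates, with a small slack that is absorbed by taking $n$ large.
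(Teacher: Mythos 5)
Your proof is correct and follows essentially the same route as the paper: a union bound over the witnesses $b$ with $d_H(ab,ba)<\delta$, using Proposition \ref{P5.13} to bound the number of such $b$ and Proposition \ref{P:nr commuting} to bound, for each such $b$ with $d_H(b,1_n)>11\delta$, the number of offending $c$, then multiplying and comparing exponents. The only difference is that you spell out the final comparison $n^{n\delta+1}\cdot n!\cdot n^{-4n\delta}<n^{-2\delta n}\cdot n!$ as the explicit inequality $n\geq 1/\delta$, which the paper leaves implicit.
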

\begin{proof}
According to Proposition \ref{P5.13} there are at most $n^{n\delta+1}$ permutations $b\in P_n$ such that $d_H(ab,ba)<\delta$. By Proposition \ref{P:nr commuting}, for each of those permutations $b$ with $d_H(b,1_n)>11\delta$, there are at most $n!\cdot n^{-4n\delta}$ cycles $c$ such that $d_H(cb,bc)<\delta$. All in all, the complement of $L_n^\delta$ has a cardinality less than $n^{n\delta+1}\cdot n!\cdot n^{-
4n\delta}<n^{-2 \delta n}\cdot n!$.  The conclusion hence follows.
\end{proof}

The set $L_n^\delta$ cannot be used directly to construct the required sofic representation. This is because $d_H(b,1_n)$ is in some sense a moving target, while in the definition of $L_n^\delta$ it is supposed to be fixed. This is why we introduce the following set:
\[K_n^\delta=\{c\in P_n:\forall b\in P_n,\ d_H(b,1_n)\leqslant 22\cdot max\{d_H(ab,ba),d_H(bc,cb),\delta\}.\}\]

\begin{p}\cite[Prop. 3.7]{Ar-Pa}\label{P:Hndelta}
For a fixed $1>\delta>0$ and a large enough $n\in\nz$, $$Card\ K_n^\delta>(1-\frac1{n^{\delta n}})[n!]$$ 
\end{p}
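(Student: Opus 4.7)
The plan is to derive the $K_n^\delta$ bound from Proposition \ref{P:Gndelta} by running it at the dyadic scales $2\delta, 4\delta, 8\delta, \ldots$ and showing that the complement of $K_n^\delta$ is covered by finitely many ``bad'' sets of $L^{\eta}_n$-type whose measures decay exponentially.

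\textbf{Step 1: Dyadic covering.} Suppose $c \notin K_n^\delta$. Then there is $b \in P_n$ with $t := d_H(b,1_n) > 22\max\{d_H(ab,ba), d_H(bc,cb), \delta\}$, so in particular $t > 22\delta$, $d_H(ab,ba) < t/22$, and $d_H(bc,cb) < t/22$. Let $k \geq 0$ be the unique integer satisfying $2^k\delta < t/22 \leq 2^{k+1}\delta$, and set $\delta' = 2^{k+1}\delta$. Then $d_H(ab,ba) < t/22 \leq \delta'$ and $d_H(bc,cb) < \delta'$ (strict, inherited from the strict inequality $< t/22$), while $d_H(b,1_n) = t > 22\cdot 2^k\delta = 11\delta'$. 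Hence $c \notin L_n^{\delta'}$, proving
\[
(K_n^\delta)^c \;\subseteq\; \bigcup_{j \geq 1}(L_n^{2^j\delta})^c.
\]

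\textbf{Step 2: Finitely many scales and the union bound.} Since $d_H(b,1_n) \leq 1$, the scale $\delta'$ produced above satisfies $11\delta' < t \leq 1$, hence $\delta' < 1/11$. Thus only indices $j$ with $2^j\delta < 1/11$ contribute, a number bounded by $J := \lfloor \log_2(1/(11\delta)) \rfloor + 1$, which depends only on $\delta$. Applying Proposition \ref{P:Gndelta} at each scale $2^j\delta$, valid for $n$ large enough, gives
\[
|(K_n^\delta)^c| \;\leq\; \sum_{j=1}^{J}|(L_n^{2^j\delta})^c| \;\leq\; \sum_{j=1}^{J} n^{-2\cdot 2^j\delta n}\cdot n! \;\leq\; J \cdot n^{-4\delta n}\cdot n!.
\]
Since $J$ is constant and $n^{-4\delta n}/n^{-\delta n} = n^{-3\delta n} \to 0$, for $n$ sufficiently large we obtain $J\cdot n^{-4\delta n} < n^{-\delta n}$, which yields $\operatorname{Card} K_n^\delta > (1 - n^{-\delta n})\cdot n!$.

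\textbf{Main obstacle.} The delicate point is the alignment of the two distinct threshold conventions: $K_n^\delta$ is defined with a single factor $22$ multiplying a \emph{maximum}, whereas $L_n^\eta$ splits this into two strict inequalities with a separate factor $11$ for $d_H(b,1_n)$. The choice $\delta' = 2^{k+1}\delta$ paired with the window $2^k\delta < t/22 \leq 2^{k+1}\delta$ is precisely what ensures both strictness conditions transfer correctly, so that a violation of $K_n^\delta$ at the ``size'' $t$ is captured by a violation of $L_n^{\delta'}$ at one of the dyadic scales. Everything else is a routine union bound exploiting the superexponential decay already established in Proposition \ref{P:Gndelta}.
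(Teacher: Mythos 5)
Your proposal is correct and takes essentially the same approach as the paper: the paper proves the containment $K_n^\delta\supseteq\bigcap_{j=0}^k L_n^{2^j\delta}$ (with $k$ minimal so that $2^{k+2}\delta>1$) via a case analysis on $\lambda=\max\{d_H(ab,ba),d_H(bc,cb)\}$, which is the same dyadic covering you establish in complementary form, and then both finish with the identical union bound using Proposition~\ref{P:Gndelta}.
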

\begin{proof}
The proof is almost over when we notice that $K_n^\delta\supseteq L_n^\delta\cap L_n^{2\delta}\cap\ldots\cap L_n^{2^k\delta}$, where $k$ is minimal with the property that $2^{k+2}\delta>1$. So let $c\in L_n^\delta\cap L_n^{2\delta}\cap\ldots\cap L_n^{2^k\delta}$ and take $b\in P_n$. Denote by 
 $\lambda=max\{d_H(ab,ba),d_H(bc,cb)\}$. If $\lambda<\delta$, then as $c\in L_n^\delta$, $d_H(b,1_n)\leqslant 11\delta$.

Assume that $\lambda\geqslant\delta$. Then, there exists $i>0$ such that $2^{i-1}\delta\leqslant\lambda<2^i\delta$. If $i\leqslant k$, then $c\in L_n^{2^i\delta}$, so $d_H(b,1_n)\leqslant 11\cdot2^i\delta\leqslant 22\lambda$. If $i>k$ then $8\lambda>1$. This proves $c\in K_n^\delta$.

By Proposition \ref{P:Gndelta} and using De Morgan's formula $\cap_{j=0}^{k}L_n^{2^j\delta}=\overline{\cup_{j=0}^{k}\overline{L_n^{2^j\delta}}},$ we obtain that
$$|L_n^\delta\cap L_n^{2\delta}\cap\ldots\cap L_n^{2^k\delta}|>(1-\frac{k+1}{n^{2\delta n}})[n!].$$ As $k$ is fixed, depending only on $\delta$, for large enough $n$, $|K_n^\delta|>(1-\frac1{n^{\delta n}})[n!]$.
\end{proof}

\subsection{Permutations with small Coxeter lenght}
In the previous subsection we saw that most permutations are in the set $K_n^\delta$, that is needed to ensure a trivial commutant in permutations. Here we show that sufficiently many permutations can be cut into many pieces. The main tool is the \emph{Coxeter length}.

We first describe what we mean for a sofic representation to have many cuts. If an element $p\in\Pi_{k\to\omega}P_{n_k}$ commutes with all $\Pi_{k\to\omega}D_{n_k}$, then $p$ has to be identity. This is because $\Pi_{k\to\omega}D_{n_k}$ is a MASA in $\Pi_{k\to\omega}M_{n_k}$. Thus, we only ask for commutativity on a separable subalgebra. Let $(X,\mu)$ be the unit interval endowed with the Lebesgue measure. There is a canonic measure-presearving map, called \emph{the standard part}, from the Loeb space to $(X,\mu)$. This induces an embedding $St^*(L^\infty(X,\mu))\subset\Pi_{k\to\omega}D_{n_k}$, so $St^*(L^\infty(X,\mu))$ is a canonical separable von Neumann sublagebra in $\Pi_{k\to\omega}D_{n_k}$. For more information the reader should check Section 1 of \cite{CMP}. We don't need these details, we only need to understand when an element of $\Pi_{k\to\omega}P_{n_k}$ commutes with $St^*(L^\infty(X,\mu))$. Here is were the Coxeter length comes in.

\begin{de}
For $p\in P_n$, the \emph{Coxeter lenght} is defined as:
\[\ell_C(p)=\frac2{n(n-1)}Card\{i<j:p(i)>p(j)\}.\]
\end{de}

\begin{ex}\label{example}
For the $n$-cycle, $\ell_C(a_n)=\frac2{n(n-1)}\cdot (n-1)=\frac2n$. If $p(i)=n+1-i$, then $\ell_C(p)=1$, and this is the maximum value.
\end{ex}

The Coxeter function can be defined on elements of the universal sofic group, as an ultralimit. The only problem is that in this ultralimit it becomes just a semi-length. For details consult Section 4 of \cite{CMP}. For our discussion the Coxeter semi-length is relevant because of the following.

\begin{p}
An element $p\in\Pi_{k\to\omega}P_{n_k}$ commutes with $St^*(L^\infty(X,\mu))$ if and only if $\ell_C(p)=0$.
\end{p}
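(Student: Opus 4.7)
The plan is to reduce the commutation condition to a simple family of test projections and then count inversions directly. Since the characteristic functions $\chi_{[0,t]}$ for $t$ in a countable dense subset of $[0,1]$ generate $L^\infty(X,\mu)$ as a von Neumann algebra, $p$ commutes with $St^*(L^\infty(X,\mu))$ if and only if it commutes with each projection $e_t:=St^*(\chi_{[0,t]})$. At the level of representatives $e_t=\Pi_{k\to\omega}e_t^k$, where $e_t^k\in D_{n_k}$ is the diagonal projection onto $\{i\leq\lfloor tn_k\rfloor\}$. Writing $p=\Pi_{k\to\omega}p_k$ and identifying $p_k$ with a permutation $\sigma_k$ of $\{1,\ldots,n_k\}$, the difference $p_ke_t^kp_k^*-e_t^k$ is supported on the symmetric difference $\sigma_k(\{1,\ldots,\lfloor tn_k\rfloor\})\triangle\{1,\ldots,\lfloor tn_k\rfloor\}$. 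Hence $[p,e_t]=0$ in the ultraproduct is equivalent to $\sigma_k$ preserving each such initial segment up to a symmetric difference of size $o(n_k)$ along $\omega$.

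For the easy direction, assume $\ell_C(p)=0$, i.e.\ $\lim_{k\to\omega}\ell_C(p_k)=0$. Fix $t\in(0,1)$ and set
\[A_k=\{i\leq\lfloor tn_k\rfloor:\sigma_k(i)>\lfloor tn_k\rfloor\},\qquad B_k=\{j>\lfloor tn_k\rfloor:\sigma_k(j)\leq\lfloor tn_k\rfloor\}.\]
Since $\sigma_k$ is a permutation $|A_k|=|B_k|$, and every pair $(i,j)\in A_k\times B_k$ is an inversion of $\sigma_k$, because $i\leq\lfloor tn_k\rfloor<j$ and $\sigma_k(i)>\lfloor tn_k\rfloor\geq\sigma_k(j)$. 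Therefore $|A_k|^2\leq\binom{n_k}{2}\ell_C(p_k)$, so $|A_k|/n_k\to 0$ along $\omega$ and consequently $[p,e_t]=0$.

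For the converse, suppose $p$ commutes with $St^*(L^\infty(X,\mu))$. Fix $m\in\nz^*$, set $t_j=j/m$, and consider the strips $I_j^k=\{i:\lfloor t_{j-1}n_k\rfloor<i\leq\lfloor t_jn_k\rfloor\}$. Commutation with each $e_{t_j}$ provides $\ve_k\to 0$ along $\omega$ with $|\sigma_k(I_j^k)\triangle I_j^k|\leq\ve_kn_k$ for every $j\leq m$. Let $G_k$ be the set of indices $i$ such that $i$ and $\sigma_k(i)$ lie in the same strip; then $|G_k^c|\leq m\ve_kn_k$ and $|\sigma_k(i)-i|\leq n_k/m+1$ for $i\in G_k$. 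Any inversion $(i,j)$ with both $i,j\in G_k$ forces $j-i<2n_k/m+2$, giving at most $O(n_k^2/m)$ such pairs, while inversions touching $G_k^c$ number at most $2|G_k^c|n_k\leq 2m\ve_kn_k^2$. Dividing the total count by $\binom{n_k}{2}$ yields $\ell_C(p_k)\leq 4/m+4m\ve_k+o(1)$; passing to $k\to\omega$ and then $m\to\infty$ gives $\ell_C(p)=0$.

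The principal subtlety is in the forward direction: a single projection $e_t$ only controls how well $\sigma_k$ preserves one initial segment, so one must combine the estimates from the whole family $\{e_{t_j}\}_{j=1}^{m-1}$ into the pointwise bound $|\sigma_k(i)-i|\lesssim n_k/m$ on the good set. The geometric observation making this clean is that an inversion whose endpoints lie in the same narrow strip is confined to a thin band of width $2n_k/m$ around the diagonal, producing the $O(1/m)$ term that vanishes as the partition is refined; the complementary contribution from $G_k^c$ is then absorbed by the ultrafilter limit.
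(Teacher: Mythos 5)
Your argument is correct and, notably, self-contained, whereas the paper does not supply a proof at all: it merely cites Proposition 4.4 of \cite{CMP} and instructs the reader to reconcile definitions with Section 2 of that paper. Your route is therefore a useful complement. The two directions are handled by two clean counting ideas. For the forward implication you observe that if $A_k$ is the set of indices $i\leq\lfloor tn_k\rfloor$ pushed above the threshold by $\sigma_k$ and $B_k$ its counterpart, then $|A_k|=|B_k|$ and every pair in $A_k\times B_k$ is an inversion, giving $|A_k|^2\leq\binom{n_k}{2}\ell_C(p_k)$; this square-root gain is exactly what converts a vanishing inversion density into a vanishing symmetric difference $|\sigma_k([1,\lfloor tn_k\rfloor])\triangle[1,\lfloor tn_k\rfloor]|=2|A_k|=o(n_k)$, and since the $e_t$ generate $St^*(L^\infty(X,\mu))$ as a von Neumann algebra, commutation with all of $St^*(L^\infty(X,\mu))$ follows. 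For the converse, partitioning into $m$ strips and using commutation with the projections $e_{t_j}-e_{t_{j-1}}$ gives that all but $m\varepsilon_kn_k$ points stay in their strip; the key geometric point, which you state explicitly, is that an inversion $(i,j)$ with both endpoints in the good set and in different strips is impossible (the images would preserve the strip order), so such inversions are confined to a band of width $O(n_k/m)$, contributing $O(n_k^2/m)$, while the bad set contributes $O(m\varepsilon_kn_k^2)$; passing $k\to\omega$ and then $m\to\infty$ kills both terms. The estimates check out, and the only implicit hypothesis you are using, namely that $St^*(\chi_{[0,t]})$ is represented by the diagonal projections onto the initial segments $\{1,\ldots,\lfloor tn_k\rfloor\}$, is indeed how the standard part map is set up in \cite{CMP}. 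In short: correct, elementary, and independent of the external reference the paper leans on.
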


This is mostly Proposition 4.4 of \cite{CMP}, but one has to also check the definitions from Section 2 of the same article. Going back to our problem, we show that there are ``enough" permutations with small Coxeter length. We also use this opportunity to ensure a sofic representation of the free group. Thus, we are interested in estimations of the following set:

\[T_n^\delta=\{p\in P_n:\ell_C(p)<2\delta\mbox{ and } d_H(w(a_n,p),Id_n)>1-\delta \mbox{ for every $w\neq 1_{\ff_2}$ of length at most $1/\delta$}\}.\]

\begin{p}
For a fixed $1>\delta>0$ and a large enough $n\in\nz$, $Card\ T_n^\delta>\delta^n\cdot n!$. 
\end{p}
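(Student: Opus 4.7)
The plan is to parameterize the small-Coxeter-length permutations via the Lehmer code and then intersect with the sofic condition using the Friedman-type estimates already invoked in the paper.

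The starting point is the standard bijection $P_n \leftrightarrow \{(c_1,\ldots,c_n) : 0 \leq c_i \leq i-1\}$, under which $\mathrm{inv}(p) = \sum_i c_i$ and therefore $\ell_C(p) = \frac{2\sum c_i}{n(n-1)}$; the condition $\ell_C(p) < 2\delta$ becomes $\sum c_i < \delta n(n-1)$. Restricting attention to codes with $c_i \leq \lfloor \delta i \rfloor$ for every $i$ automatically forces $\sum c_i \leq \delta n(n+1)/2 < \delta n(n-1)$ once $n \geq 4$. The number of such codes is
\[
\prod_{i=1}^n \bigl(\lfloor \delta i \rfloor + 1\bigr) \;>\; \prod_{i=1}^n \delta i \;=\; \delta^n \cdot n!,
\]
with comfortable slack arising from the indices $i < 1/\delta$, where $\lfloor \delta i \rfloor + 1 = 1$ while $\delta i < 1$. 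A short Stirling estimate of the contribution of these small indices in fact shows that the product is at least $C_\delta \cdot \delta^n n!$ for a constant $C_\delta > 1$ depending only on $\delta$ (growing like $e^{1/\delta}/\sqrt{2\pi/\delta}$ as $\delta\to 0$).

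For the sofic requirement, the ball $B_{1/\delta}(\ff_2)$ is finite, so one can treat its non-trivial elements one at a time. For each such $w$, Proposition \ref{T5.20} (or the straightforward extension to a general word, exactly as was done just before its statement) bounds the number of $p \in P_n$ violating $d_H(w(a_n, p), Id_n) > 1 - \delta$ by $\epsilon \cdot n!$ for any chosen $\epsilon > 0$ and $n > n_0(\epsilon, w)$. A union bound over the finitely many words bounds the total sofic-failure set by $\epsilon \cdot |B_{1/\delta}(\ff_2)| \cdot n!$. Subtracting from the count in the first step yields
\[
|T_n^\delta| \;\geq\; C_\delta \, \delta^n \, n! \;-\; \epsilon \, |B_{1/\delta}(\ff_2)| \, n!,
\]
which is strictly larger than $\delta^n n!$ as soon as $\epsilon \cdot |B_{1/\delta}(\ff_2)| < (C_\delta - 1)\delta^n$.

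The hard part is quantitative: the upper bound required on $\epsilon$ decays exponentially in $n$, whereas Proposition \ref{T5.20} as stated lets one fix $\epsilon$ only before choosing the threshold $n_0(\epsilon)$. The natural remedy is to replace the stated proposition by the sharper content of Theorem 1.1 of \cite{Fr}, which actually provides exponentially small failure probability of the form $e^{-c(\delta,w)\, n}$ for each word $w$; as long as $c(\delta, w) > \log(1/\delta)$, which is what one expects for the finite collection of relevant words, the union bound produces a sofic-failure estimate of order $o(\delta^n n!)$ and the subtraction argument closes for all $n$ past a threshold depending only on $\delta$.
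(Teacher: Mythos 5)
Your route via Lehmer codes is genuinely different from the paper's, and the code-counting step is correct: the product $\prod_i(\lfloor\delta i\rfloor+1)$ does exceed $\delta^n\cdot n!$, and any permutation whose code is bounded by $\lfloor\delta i\rfloor$ satisfies $\ell_C<2\delta$ once $n\geqslant 4$. But the gap you flag at the end is real, and the proposed fix does not close it. You need, for each of the finitely many words $w$, the failure set $\{c\in P_n:d_H(w(a_n,c),Id_n)\leqslant 1-\delta\}$ to have cardinality $o(\delta^n\cdot n!)$ --- an exponentially small fraction of $n!$. Proposition~\ref{T5.20} is purely qualitative and gives no rate, so it cannot be applied with $\ve$ shrinking in $n$. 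The appeal to Theorem 1.1 of \cite{Fr} as providing $e^{-c(\delta,w)n}$ with $c>\log(1/\delta)$ has no support: as $\delta\to0$ the failure event (``at least $\delta n$ fixed points'') becomes \emph{less} demanding, so its probability increases, while the target $\delta^n$ decreases, and there is no reason the exponent should scale like $\log(1/\delta)$. Friedman's theorem controls eigenvalues and expansion, not the fixed-point count of $w(a_n,c)$ directly. What one should actually expect here is a super-exponential tail (Poisson-type, roughly $n^{-c\delta n}$, comparable to the paper's other counting estimates), but that is a separate lemma, not present in the paper, that you would have to prove --- for instance by a moment argument.

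The paper avoids the intersection problem altogether. Setting $k=\lfloor1/\delta\rfloor$ and $n=km$, it restricts to permutations that are block-diagonal with $k$ blocks in $P_m$, each block chosen (via Proposition~\ref{T5.20} applied in $P_m$) to satisfy the sofic condition. Such permutations automatically have $\ell_C<2\delta$, so both constraints are enforced by construction rather than intersected, and the lower bound $[(1-\ve)\cdot m!]^k>\delta^n\cdot n!$ follows from Stirling with no need for any tail estimate on a failure set. To push your Lehmer-code approach through, you would need to supply the missing super-exponential estimate for each word in the ball.
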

\begin{proof}
Let $k=\lfloor 1/\delta\rfloor$ and assume that $n=k\cdot m$. It might not be the case that $k|n$, but $k$ is fixed so the error will not affect our computations for large enough $n$. We consider only elements that permute the first m points, the next m points and so on. Any such permutation has total displacement less than $2\delta$:
\[\ell_C(p)\leqslant\frac2{n(n-1)}\cdot\frac{m(m-1)}2\cdot k=\frac{m-1}{mk-1}<\frac1k\leqslant2\delta.\]
We construct permutations in $T_n^\delta$ by choosing $k$ permutations $q\in P_m$, such that for each one of them $d_H(w(a_m,q),Id_m)>1-\delta \mbox{ for every $w\neq 1_{\ff_2}$ of length at most }1/\delta$. According to Proposition \ref{T5.20}, and as the ball of radius $1/\delta$ in $\ff_2$ is finite, for any $\ve>0$, for large enough $m$, there are at least $(1-\ve)\cdot m!$ permutations in $P_m$ satisfying this property.

We proved that $Card\ T_n^\delta>[(1-\ve)\cdot m!]^k$. We only need to prove that this value is larger than $\delta^n\cdot n!$, for large enough $n$. Recall Striling's approximation: $\sqrt{2\pi}\cdot\big(\frac ne\big)^n\cdot n\leqslant n!\leqslant e\cdot \big(\frac ne\big)^n\cdot n$. Setting $c=\sqrt{2\pi}$, we have:
\begin{align*}
[(1-\ve)(m!)]^k&\geqslant c^k(1-\ve)^k\cdot\big(\frac me\big)^{mk}\cdot m^k=c^k(1-\ve)^k\big(\frac 1k\big)^n\big(\frac ne\big)^n\cdot m^k\\
&> c^k(1-\ve)^k\delta^n\big(\frac ne\big)^n\cdot n\geqslant c^k(1-\ve)^k/e\cdot\delta^n\cdot n!
\end{align*}
We can choose $\ve>0$ small enough to guarantee that $c^k(1-\ve)^k/e>1$.

\end{proof}

Putting everything together, we get.

\begin{p}\label{p:special permutation}
For any $\delta>0$, for large enough $n$, $K_n^\delta\cap T_n^\delta\neq\emptyset$.
\end{p}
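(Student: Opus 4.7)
The plan is to use a simple cardinality (pigeonhole) argument combining the two lower bounds established in the preceding propositions. Concretely, the complement of $K_n^\delta$ in $P_n$ has cardinality at most $\frac{1}{n^{\delta n}}\cdot n!$, while $T_n^\delta$ has cardinality strictly greater than $\delta^n \cdot n!$. If the two sets were disjoint, then $T_n^\delta$ would be contained in $P_n \setminus K_n^\delta$, forcing the inequality $\delta^n \cdot n! < \frac{1}{n^{\delta n}} \cdot n!$, i.e. $\delta^n < n^{-\delta n}$. Taking the $n$-th root gives $\delta < n^{-\delta}$, equivalently $n^\delta < 1/\delta$, equivalently $n < (1/\delta)^{1/\delta}$.

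Since $\delta$ is fixed, $(1/\delta)^{1/\delta}$ is a constant, so for every $n$ exceeding this constant the inequality fails and the intersection must be non-empty. I would therefore simply choose $n_0$ large enough that both previous propositions apply and $n_0 > (1/\delta)^{1/\delta}$, and conclude for $n \geqslant n_0$.

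The argument is essentially a one-line pigeonhole, so there is no real obstacle; the work was already done in establishing the exponential-type bound $|K_n^\delta| > (1 - n^{-\delta n})\cdot n!$ (which makes the bad set super-exponentially small in the factorial scale) and the sub-exponential lower bound $|T_n^\delta| > \delta^n \cdot n!$. The only thing to verify is that the sub-exponential decay $\delta^n$ still dominates the super-exponential decay $n^{-\delta n}$ of the complement, which it clearly does for $n$ large. No additional construction or probabilistic reasoning is needed beyond this counting comparison.
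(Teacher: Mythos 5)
Your argument is correct and is essentially the same counting/pigeonhole comparison the paper uses: the paper's proof just states that the required inequality $\delta^n\cdot n!>n^{-\delta n}\cdot n!$ reduces to $\delta\cdot n^\delta>1$, which holds for $n$ large, while you spell out the same comparison (complement of $K_n^\delta$ is smaller than $T_n^\delta$) and arrive at the equivalent threshold $n>(1/\delta)^{1/\delta}$.
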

\begin{proof}
We need to show that in the limit $\delta^n\cdot n!>n^{-\delta n}\cdot n!$. This is equivalent to $\delta\cdot n^\delta>1$.
\end{proof}

\subsection{A sofic representation away from extreme points}

\begin{te}\label{strange rep}
There exists a sofic representation $\Theta:\ff_2\to\Pi_{k\to\omega}P_{n_k}$ such that $\Theta(\ff_2)^\prime\cap\Pi_{k\to\omega}P_{n_k}=\{Id\}$ and $St^*(L^\infty(X,\mu))\subset\Theta(\ff_2)^\prime$.
\end{te}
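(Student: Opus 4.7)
My plan is to construct $\Theta$ diagonally, using Proposition \ref{p:special permutation} to select its two generators simultaneously. I would fix a sequence $\delta_k\to 0$, say $\delta_k=1/k$, and for each $k$ choose $n_k$ large enough that $K_{n_k}^{\delta_k}\cap T_{n_k}^{\delta_k}$ is non-empty, then pick some $c_k$ in this intersection. I then define $\Theta:\ff_2\to\Pi_{k\to\omega}P_{n_k}$ on the two free generators $x_1,x_2$ of $\ff_2$ by $\Theta(x_1)=\Pi_{k\to\omega}a_{n_k}$ and $\Theta(x_2)=\Pi_{k\to\omega}c_k$; this extends uniquely to a homomorphism because $\ff_2$ is free. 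The three required properties should then be read off from the three features encoded in the definitions of $K_n^\delta$ and $T_n^\delta$.

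I would first check injectivity, i.e.\ soficity. For any nontrivial $w\in\ff_2$, the second clause in the definition of $T_n^\delta$ gives that for every $k$ with $|w|\leqslant 1/\delta_k$ one has $d_H(w(a_{n_k},c_k),1_{n_k})>1-\delta_k$; passing to the ultraproduct yields $d_H(\Theta(w),Id)=1$, so $\Theta(w)\neq Id$.

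Next I would establish the trivial-commutant clause. Take $b=\Pi_{k\to\omega}b_k\in\Pi_{k\to\omega}P_{n_k}$ commuting with both $\Theta(x_1)$ and $\Theta(x_2)$; then $d_H(a_{n_k}b_k,b_ka_{n_k})\to 0$ and $d_H(c_kb_k,b_kc_k)\to 0$ along $\omega$. Because $c_k\in K_{n_k}^{\delta_k}$, the defining inequality
\[d_H(b_k,1_{n_k})\leqslant 22\,\max\bigl\{d_H(a_{n_k}b_k,b_ka_{n_k}),\,d_H(b_kc_k,c_kb_k),\,\delta_k\bigr\}\]
forces its left hand side to tend to $0$ along $\omega$, and so $b=Id$ in the ultraproduct.

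For the final clause, I would invoke the Coxeter-length characterisation stated before Example \ref{example}: an element of $\Pi_{k\to\omega}P_{n_k}$ commutes with $St^*(L^\infty(X,\mu))$ exactly when its Coxeter semi-length vanishes. By Example \ref{example}, $\ell_C(a_{n_k})=2/n_k\to 0$, and $c_k\in T_{n_k}^{\delta_k}$ yields $\ell_C(c_k)<2\delta_k\to 0$, so both generators of $\Theta(\ff_2)$ have zero semi-length. Since the commutant of any subset of $\Pi_{k\to\omega}M_{n_k}$ is multiplicatively closed, all of $\Theta(\ff_2)$ commutes with $St^*(L^\infty(X,\mu))$. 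The only real obstacle is lining up the three properties on a single diagonal sequence, and this is exactly what Proposition \ref{p:special permutation} provides; once that is in hand, the rest is routine ultraproduct manipulation.
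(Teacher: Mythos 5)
Your proof is correct and follows essentially the same route as the paper's: fix $\delta_k\to 0$, use Proposition \ref{p:special permutation} to extract $c_k\in K_{n_k}^{\delta_k}\cap T_{n_k}^{\delta_k}$, set $\Theta(x_1)=\Pi_{k\to\omega}a_{n_k}$ and $\Theta(x_2)=\Pi_{k\to\omega}c_k$, and read the three conclusions off the three defining properties. You spell out two points the paper leaves implicit (the soficity check from the second clause of $T_n^\delta$, and the observation that commuting of the two generators with $St^*(L^\infty(X,\mu))$ propagates to the whole group because a commutant is multiplicatively closed), but these are minor elaborations rather than a different argument.
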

\begin{proof}
Let $(\delta_k)_k\in\rz_+^*$ be a decreasing sequence converging to $0$. By Proposition \ref{p:special permutation}, applied to each $\delta_k$, we construct a sequence of permutations $p_k\in P_{n_k}$ such that $p_k\in K_{n_k}^\delta\cap T_{n_k}^\delta$.
Set $\Theta(x_1)=\Pi_{k\to\omega}a_{n_k}$ and $\Theta(x_1)=\Pi_{k\to\omega}p_k$. Then, by \ref{example}, $\ell_C(\Theta(x_1))=\lim_{k\to\omega}2/n_k=0$ and $\ell_C(\Theta(x_2))=\lim_{k\to\omega}\ell_C(p_k)=0$. This shows that $St^*(L^\infty(X,\mu))\subset\Theta(\ff_2)^\prime$.

Let $b=\Pi_{k\to\omega}b_k\in\Pi_{k\to\omega}P_{n_k}$ be in the commutant of $\Theta$. 
This means that $\lim_{k\to\omega}d_H(a_{n_k}b_k,b_ka_{n_k})=0$ and $\lim_{k\to\omega}d_H(p_kb_k,b_kp_k)=0$. Let $\ve>0$ and take $F\in\omega$ such that for all $k\in F$, $\delta_k<\ve$ and $d_H(a_{n_k}b_k,b_ka_{n_k})<\ve$, $d_H(p_kb_k,b_kp_k)<\ve$. As $p_k\in K_{n_k}^{\delta_k}$, we get:
\[d_H(b_k,Id_{n_k})\leqslant22\cdot max\{d_H(a_{n_k}b_k,b_ka_{n_k}),d_H(p_kb_k,b_kp_k),\delta_k\}<22\ve.\]
As such, $d_H(b,Id)\leqslant22\ve$. As $\ve>0$ is arbitrary, $b=Id$.
\end{proof}

The reason that we believe $\Theta$ is not in the closure of the convex hull of extreme points in $Sof(\ff_2,P^\omega)$ is that for each $A$ a measurable subset of the unit interval, we can define $\Theta_A$ a cut of $\Theta$. The fact that $\Theta$ has no commutant in $\Pi_{k\to\omega}P_{n_k}$ suggest that these cuts are different as elements in $Sof(\ff_2,P^\omega)$. In order to prove this statement, one would need to solve conjecture \ref{open problem}. We end with another open problem, that would also finish this proof.

\begin{op}
Let $\Theta:G\to\Pi_{k\to\omega}P_{n_k}$ be a sofic representation in the closure of the convex hull of extreme points in $Sof(G,P^\omega)$. Then there are at most countably many different sofic representations that are obtained as cuts of $\Theta$.
\end{op}

\begin{bibdiv}
\begin{biblist}

\bibitem [Ar-Pa1]{Ar-Pa1} G. Arzhantseva, L. P\u aunescu; \emph{Linear sofic groups and algebras}, Transactions of the American Mathematical Society {\bf369} (2017),
2285-2310.

\bib{Ar-Pa}{article}{
   author={Arzhantseva, G.}
   author={P{\u{a}}unescu, L.},
   title={Constraint metric approximations and equations in groups},
   journal={Journal of Algebra },
   volume={516},
   date={2018},
   pages={329--351},
}

\bib{Ak}{article}{
   author={Atkinson, S.},
   title={Minimal faces and Schur's Lemma for embeddings into $R^U$},
   journal={Indiana University Mathematics Journal },
   volume={67(4)},
   date={2018},
   pages={1327-1340},
}

\bib{Ak1}{article}{
   author={Atkinson, S.},
   title={Convex sets associated to C*-algebras},
   journal={Journal of Functional Analysis },
   volume={271(6)},
   date={2016},
  pages={1604-1651},
}

\bibitem[Br]{Br}N. Brown; \emph{Topological dynamical systems associated to $II_1$-factors}, Adv. Math. {\bf227} (2011), no. 4, 1665-1699.

\bibitem [Br-Ca]{Br-Ca} N. Brown, V. Capraro; \emph{Groups associated to $II_1$-factors}, J. Funct. Anal., {\bf264} (2013), Iss 2, 493-507.

\bibitem [Ca-Fr]{Ca-Fr} V. Capraro, T. Fritz; \emph{On the axiomatization of convex subsets of Banach spaces}, Proc. Amer. Math. Soc, {\bf141} (2013), no. 6, 2127-2135.

\bibitem [Ca-Pa]{Ca-Pa} V. Capraro, L. Paunescu; \emph{Product between ultralfilters and applications to the Connes' Embedding Problem}, Journal of Operator Theory  {\bf 68} (2012), no. 1, 165-172.

\bibitem [Ch]{Ch} A. Chirvasitu; \emph{Dedekind complete posets from sheaves on von Neumann algebras}, Applied Categorical Structures, {\bf25} (2017), Iss. 1, 127-146.

\bib{CMP}{article}{
   author={Cavaleri, M.}
   author={Munteanu, R.}
   author={P{\u{a}}unescu, L.},
   title={Two special subgroups of the universal sofic group},
   journal={Ergodic Theory Dynam. Systems},
   volume={39},
   issue={12}
   date={2019},
   pages={3250--3261},
}

 \bibitem [El-Sz1]{El-Sz1} G. Elek, E. Szabo, \emph{Hyperlinearity, essentially free actions and L2-invariants. The sofic property}, Math. Ann. {\bf332} (2005), no. 2, 421-441. 

\bibitem[El-Sz2]{El-Sz} G. Elek, E. Szabo, \emph{Sofic representations of amenable groups}, Proceedings of the American Mathematical Society, {\bf139} (2011), no. 12,  4285-4291.

\bib{Fr}{article}{
   author={Friedman, J.},
   title={A Proof of Alon's Second Eigenvalue Conjecture and Related Problems},
   journal={Memoirs of the Americal Mathematical Society},
   volume={195},
   number={910}
   date={2008},
}

\bibitem[Ju]{Ju} K. Jung, \emph{Amenability, tubularity, and embeddings into $R^\omega$}. Math. Ann. {\bf338} (2007), no. 1, 241-248.

\bib{Pa1}{article}{
   author={P{\u{a}}unescu, L.},
   title={On sofic actions and equivalence relations},
   journal={J. Funct. Anal.},
   volume={261},
   date={2011},
   pages={2461--2485},
}

\bib{Pa2}{article}{
   author={P{\u{a}}unescu, L.},
   title={A convex structure on sofic embeddings},
   journal={Ergodic Theory Dynam. Systems},
   volume={34},
   date={2014},
   number={4},
   pages={1343--1352},
}

\bib{Pa4}{article}{
   author ={P{\u{a}}unescu, L.},
    title={All automorphisms of the universal sofic group are class-preserving},
    journal={Rev. Roumaine Math. Pures. Appl.},
   volume={59},
   date={2014},
   number={2},
   pages={255--263},}

\bib{Pa3}{article}{
   author={P{\u{a}}unescu, L.},
   title={Convex structures revisited},
   journal={Ergodic Theory Dynam. Systems},
   volume={36},
   date={2016},
   number={5},
   pages={1596--1615},
 }

\bib{Pa5}{article}{
   author={P{\u{a}}unescu, L.},
   title={Unitaries in ultraproduct of matrices},
   journal={J. Operat. Theor.},
   volume={78},
   date={2017},
   number={2},
   pages={347--355},
 }

\bib{Pa-Ra}{article}{
   author={P{\u{a}}unescu, L.},
   author={R{\u{a}}dulescu, F.},
   title={A generalisation to Birkhoff-von Neumann theorem},
   journal={Adv. Math},
   volume={308},
   date={2017},
   pages={836--858},
 }

\end{biblist}
\end{bibdiv}

\end{document}